\theoremstyle{plain}
\newtheorem{theorem}{Theorem}[section]
\newtheorem{conjecture}[theorem]{Conjecture}
\newtheorem{corollary}[theorem]{Corollary}
\newtheorem{lemma}[theorem]{Lemma}
\newtheorem{proposition}[theorem]{Proposition}
\theoremstyle{definition}
\newtheorem{definition}[theorem]{Definition}
\newtheorem{example}[theorem]{Example}
\newtheorem{remark}[theorem]{Remark}
\newcommand{\rr}{\mathbb{R}}
\newcommand{\calB}{\mathcal{B}}
\newcommand{\calI}{\mathcal{I}}
\newcommand{\calL}{\mathcal{L}}
\newcommand{\calN}{\mathcal{N}}
\newcommand{\calP}{\mathcal{P}}
\newcommand{\calQ}{\mathcal{Q}}
\newcommand{\calR}{\mathcal{R}}
\newcommand{\calS}{\mathcal{S}}
\newcommand{\calT}{\mathcal{T}}
\newcommand{\MR}{\mathrm{MR}}
\newcommand{\Gr}{\mathit{Gr}}
\definecolor{light-gray}{gray}{0.6}
\tikzstyle{propagator}=[decorate,decoration={snake,amplitude=0.8mm}]
\tikzstyle{smallpropagator}=[decorate,decoration={snake,segment length=3mm,amplitude=0.5mm}]
\tikzstyle{firstdash}=[dashed,line cap=round, dash pattern=on 2pt off 1pt]
\tikzstyle{seconddash}=[dashed,line cap=round, dash pattern=on 0.5pt off 1pt]
\newcommand{\drawWLD}[2]{

\pgfmathsetmacro{\n}{#1}
\pgfmathsetmacro{\radius}{#2}
\pgfmathsetmacro{\angle}{360/\n}
    \foreach \i in {1,2,...,\n} {
      \pgfmathsetmacro{\x}{\angle*\i}
        \draw[-,shorten >=-\radius*0.1 cm,shorten <=-\radius*0.1 cm]  (\x:\radius cm)-- (\x + \angle: \radius cm);
    }

}
\newcommand{\drawprop}[4]{
\pgfmathsetmacro{\r}{#1}
\pgfmathsetmacro{\bumpr}{#2}
\pgfmathsetmacro{\s}{#3}
\pgfmathsetmacro{\bumps}{#4}
\pgfmathsetmacro{\perturbe}{\angle/\n}

\begin{scope}
\clip (\angle*\r:\radius) -- (\angle + \angle*\r:\radius) -- (\angle*\s:\radius) -- (\angle + \angle*\s:\radius) -- (\angle*\r:\radius);
\draw[propagator] (\angle*\r + \angle/2 + \bumpr*\perturbe:\radius) -- (\angle*\s + \angle/2 + \bumps*\perturbe:\radius);
\end{scope}
}
\newcommand{\drawnumbers}{
  \foreach \i in {1,2,...,\n} {
  \pgfmathsetmacro{\x}{\angle*\i}
  \draw (\x:\radius*1.15) node {\footnotesize \i};
}
}
\title{Basis shape loci and the positive Grassmannian}
\author{Cameron Marcott}
\date{}
\begin{document}
\maketitle

\begin{abstract}
\noindent
A basis shape locus takes as input data a zero/nonzero pattern in an $n \times k$ matrix, which is equivalent to a presentation of a transversal matroid. The locus is defined as the set of points in $\Gr(k,n)$ which are the row space of a matrix with the prescribed zero/nonzero pattern. We show that this locus depends only on the transversal matroid, not on the specific presentation. When a transversal matroid is a positroid, the closure of its basis shape locus is the associated positroid variety. We give a sufficient, and conjecturally necessary, condition for when a transversal matroid is a positroid. Finally, we discus applications to two programs for computing scattering amplitudes in $\calN = 4$ SYM theory: one trying to prove that projections of certain positroid cells triangulate the amplituhedron, and another using Wilson loop diagrams.
\end{abstract}

\section{Introduction}

Throughout this paper, $\calS = \{S_1, \dots, S_k\}$ will be a family of subsets of $[n]$. The family $\calS$ is unordered and may contain repeated sets. Say that $S_i$ is the {\textit{support set}} of a vector $\mathbf{v}_i$, $\mathrm{supp}(\mathbf{v}_i) = S_i$, if the $j^{th}$ coordinate of $\mathbf{v}_i$ is nonzero if and only if $j \in S_i$ for all $j \in [n]$.

\begin{definition} \label{def:basis_shape_locus}
Given a set system $\calS$, the {\textit{basis shape locus}} $L(\calS)$ associated to $\calS$ is the subset
\begin{displaymath}
\left\{
\mathrm{span}( \mathbf{v}_1, \mathbf{v}_2, \dots, \mathbf{v}_k) :
\mathbf{v}_i \in \rr^{[n]}, \mathrm{rk}(\mathbf{v}_1, \dots, \mathbf{v}_k) = k,  \mathrm{supp}(\mathbf{v}_i) = S_i \mbox{ for all $i$}
\right\}
\end{displaymath}
\noindent
of $\Gr(k,n)$.
\end{definition}

For $1 \leq i \leq k$, $1 \leq j \leq n$, let $x_{ij}$ be algebraically independent, invertible variables. Let $M_{\calS}(\mathbf{x})$ be the $k \times n$ matrix whose $i,j$ entry is $x_{ij}$ if $j \in S_i$ and is zero otherwise. So, any point in $L(\calS)$ may be obtained by evaluating the $x_{ij}$ at nonzero real numbers in $M_{\calS}(\mathbf{x})$, then taking the row span of the resulting matrix.

Also associated to $\calS$ is a transversal matroid $\calB(\calS)$, which is the matroid represented by a generic point in $L(\calS)$. The set system $\calS$ is referred to as a {\textit{presentation}} of $\calB(\calS)$. In general, transversal matroids have multiple presentations. Background on transversal matroids is provided in Section \ref{sec:transversal_matroids}. Theorem \ref{thm:locus_is_matroidal} shows that the locus $L(\calS)$ depends only on the matroid $\calB(\calS)$, not the presentation $\calS$.

A naive upper bound on the dimension of $L(\calS)$ is
\begin{equation}
\mathrm{nmd}(\calS) = -k + \sum_{i = 1}^{k} |S_i|,
\end{equation}
obtained by scaling each row of $M_{\calS}(\mathbf{x})$ so that one entry is $1$, then evaluating the other parameters freely. We call $\mathrm{nmd}(\calS)$ the {\textit{naive maximal dimension}} of $L(\calS)$. Theorem \ref{thm:dimension_and_minimal_presentations} shows that $\mathrm{dim}(L(\calS)) = \mathrm{nmd}(\calS)$ if and only if $\calS$ is a {\textit{minimal presentation}} of $\calB(\calS)$.

\begin{example} \label{ex:nmd}
Let $\calS$ be the set system consisting of
\begin{displaymath}
\begin{split}
S_1 & = \{1, 3, 4\}, \\
S_2 & = \{1, 2\}, \\
S_3 & = \{2, 3\}.
\end{split}
\end{displaymath}
\noindent
Then,
\begin{displaymath}
M_{\calS}(\mathbf{x}) =
\left(
\begin{array}{cccc}
x_{11} & 0 & x_{13} & x_{14} \\
x_{21}& x_{22} & 0 & 0 \\
0 & x_{32} & x_{33} & 0
\end{array}
\right).
\end{displaymath}
\noindent
Here, $\mathrm{nmd}(\calS) = 4$. However, we may eliminate a degree of freedom via
\begin{displaymath}
\left(
\begin{array}{ccc}
1 & \frac{-x_{11}}{x_{21}} & \frac{x_{11}x_{22}}{x_{21}x_{32}} \\
0 & 1 & 0 \\
0 & 0 & 1
\end{array}
\right)
M_{\calS}(\mathbf{x}) =
\left(
\begin{array}{cccc}
0 & 0 & x_{13} + \frac{x_{11}x_{22}x_{33}}{x_{21}x_{32}} & x_{14} \\
x_{21}& x_{22} & 0 & 0 \\
0 & x_{32} & x_{33} & 0
\end{array}
\right).
\end{displaymath}
\noindent
We cannot eliminate any more degrees of freedom from here, and the dimension of $L(\calS)$ is $3$. The support sets of this new matrix's rows give a set system $\calS'$ with $L(\calS') = L(\calS)$ and $\mathrm{nmd}(\calS') = 3$. Moreover, $\calS'$ is a minimal presentation its transversal matroid.
\end{example}

Theorem \ref{thm:loci_are_positroids} shows that if the transversal matroid $\calB(\calS)$ is a positroid, then $\overline{L(\calS)}$ is the {\textit{positroid variety}} labelled by this matroid. Positroids are introduced in Section \ref{sec:positroid}. Theorem \ref{thm:no_crossing_implies_positroid} gives a sufficient condition for testing whether the matroid $\calB(\calS)$ is a positroid. We conjecture that this condition is also necessary. Section \ref{sec:transversal_positroids} proves this conjecture in several special cases, including when $\overline{L(\calS)}$ is a Richardson variety, and when all sets in $\calS$ have the same size. Section \ref{sec:comparison_with_other_classes} compares the basis shape loci to several similarly defined families of subsets of $\Gr(k,n)$, including generalized Richardson varieties introduced by Billey and Coskun in \cite{billey:generalized_richardson_varieties}, interval positroids introduced by Knutson in \cite{knutson:interval_positroid}, and diagram varieties introduce by Liu in \cite{liu:diagram_varieties}.

The motivation for this work partially comes from two programs related to scattering amplitudes in $\calN = 4$ SYM theory. The first computes the on-shell amplitudes as volumes of an object called the amplituhedron, a certain projection of $\Gr_{\geq 0}(k,n)$. In \cite{karp:decompositions}, Karp, Williams, and Zhang give a program for triangulating the amplituhedron. Their program identifies certain positroid cells, called BCFW cells, finds a basis with a prescribed support shape for any plane in these cells, then uses these basis shapes and sign variation techniques to argue that the images of these cells are disjoint in the amplituhedron. The second program computes the total amplitude using a Wilson loop. This program identifies matrices of the form $M_{\calS}(\mathbf{x})$ for a particular class of set systems $\calS$, then associates an integral to this family of matrices. Section \ref{sec:applications_of_basis_shape_loci} discuses applications to these programs, and draw connections between the basis shapes appearing in them.

\section{Background and notation}

\subsection{Matroids} \label{sec:matroid}

\begin{definition} \label{def:matroid}
A {\textit{matroid}} is a collection of sets $\calB \subseteq \binom{[n]}{k}$ such that for each $S, T \in \calB$ and each $i \in S \setminus T$ there is some $j \in T \setminus S$ such that $S\setminus i \cup j$ and $T \setminus j \cup i$ are both in $\calB$.
\end{definition}

For any $I \in \binom{[n]}{k}$, let $\Delta_{I}$ be the $I^{th}$ Pl\"ucker coordinate on $\Gr(k,n)$. So, if a point in $\Gr(k,n)$ is represented as the row space of a $k \times n$ matrix, $\Delta_{I}$ is the determinant of the $k \times k$ submatrix whose columns are indexed by $I$. For any $V \in \Gr(k,n)$ the set
\begin{displaymath}
\left\{ I \in \binom{[n]}{k} : \Delta_{I}(V) \neq 0 \right\}
\end{displaymath}
\noindent
is a matroid, called the {\textit{matroid represented by $V$}}.

We quickly review some terminology from matroid theory. For a matroid $\calB$, the sets in $\calB$ are called {\textit{bases}} of the matroid. The set $[n]$ is called the {\textit{ground set}} of the matroid. A set $I \subseteq [n]$ is {\textit{independent}} if it is contained in some element of $\calB$. Otherwise, $I$ is {\textit{dependent}}. The {\textit{rank}} of $I$, $\mathrm{rk}(I)$, is the size of the largest independent subset of $I$. An element $x \in [n]$ is a {\textit{loop}} if $\mathrm{rk}(x) = 0$ and is a {\textit{coloop}} if $x \in B$ for all $B \in \calB$. A set $I$ is a {\textit{circuit}} if it is dependent and all of its proper subsets are independent. The set $I$ is a {\textit{cocircuit}} if it is a minimal set intersecting every basis. The set $I$ is a {\textit{flat}} if for all $x \in [n]\setminus I$, $\mathrm{rk}(I \cup x) = \mathrm{rk}(I) + 1$. A flat is a {\textit{cyclic flat}} if it is a union of circuits.

Given $I \subseteq [n]$, the {\textit{restriction}} of $\calB$ to $I$, $\calB|_I$, is the collection of maximal size $C \subseteq I$ such that $C \subseteq B$ for some $B \in \calB$. The {\textit{deletion}} of $I$, $\calB \setminus I$, is the collection of maximal size $C \subseteq [n] \setminus I$ such that $C \subseteq B$ for some $B \in \calB$. The {\textit{contraction}} of $I$, $\calB / I$, is the collection of $C \subseteq [n] \setminus I$ such that $C \cup D \in B$ for any maximal rank $D \subset I$. The result of a restriction, deletion, or contraction of a matroid is another matroid, called a {\textit{minor}} of $\calB$. The ground set $[n]$ is ordered $1 < 2 < \cdots < n$ and the ground sets of $\calB|_I$, $\calB \setminus I$, and $\calB / I$ inherit orderings from this ordering. This feature is not always present when studying general matroids, but is necessary when studying positivity phenomena.

The {\textit{dual}} of a matroid is $\calB^{\ast} = \{[n] \setminus B : B \in \calB\} \subseteq \binom{[n]}{n-k}$. The dual $\calB^{\ast}$ is a matroid, and $\calB^{\ast \ast} = \calB$. Deletion and contraction are dual operations in the sense that $(\calB \setminus I)^{\ast} = \calB^{\ast} / I$.

The {\textit{direct sum}} of matroids $\calB$ and $\calB'$ on disjoint ground sets, $\calB \oplus \calB'$, is the set of $B \cup B'$ where $B \in \calB$ and $B' \in \calB'$. A matroid is {\textit{connected}} if it is not direct sum of two nontrivial matroids. If $\calB$ is the direct sum of connected matroids, each constituent in this sum is a {\textit{connected component}} of $\calB$.

The {\textit{matroid polytope}} of $\calB$ is the convex hull of the indicator vectors of the sets in $\calB$ in $\rr^{[n]}$.

\begin{theorem}[Proposition 2.6 in \cite{feichtner:matroid_polytopes}]
If $\calB$ is connected, the matroid polytope of $\calB$ is the subset of the simplex $\left\{(x_1, x_2, \dots, x_n) \in \rr^{[n]}_{\geq 0} : \sum_{i = 1}^{n} x_i = k \right\}$ defined by the inequalities
\begin{displaymath}
\sum_{i \in F} x_i \leq \mathrm{rk}(F),
\end{displaymath}
\noindent
where $F$ is a flat such that $\calB|_{F}$ and $\calB/F$ are connected. Each such inequality defines a facet of the matroid polytope.
\end{theorem}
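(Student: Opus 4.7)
The plan is to proceed in three steps: (i) reduce the full inequality system to one indexed by flats, (ii) decompose the face cut out by each flat-inequality as a product of matroid polytopes in order to compute its dimension, and (iii) confirm that only facet inequalities are needed.

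For (i), I start from Edmonds' description
\[
P_\calB \;=\; \left\{x \in \rr_{\geq 0}^{[n]} : \sum_i x_i = k,\ \sum_{i \in S} x_i \leq \mathrm{rk}(S) \text{ for all } S \subseteq [n]\right\}.
\]
If $S$ is not a flat, replacing $S$ by its closure $F = \overline{S}$ yields a stronger inequality on $P_\calB$: since $\mathrm{rk}(F) = \mathrm{rk}(S)$ while $F \supseteq S$, nonnegativity gives $\sum_{i \in S} x_i \leq \sum_{i \in F} x_i \leq \mathrm{rk}(F) = \mathrm{rk}(S)$. Thus only flat indices survive.

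The crux is step (ii). The face $\Phi_F := P_\calB \cap \{\sum_{i \in F} x_i = \mathrm{rk}(F)\}$ has as vertices precisely the indicator vectors of bases $B$ with $|B \cap F| = \mathrm{rk}(F)$, and each such $B$ decomposes uniquely as $B_1 \sqcup B_2$ with $B_1$ a basis of $\calB|_F$ and $B_2$ a basis of $\calB/F$. Hence $\Phi_F$ is affinely isomorphic to the product $P_{\calB|_F} \times P_{\calB/F}$. Combining with the standard dimension formula $\dim P_\calM = |E(\calM)| - c(\calM)$, where $c(\calM)$ counts connected components (loops treated as singleton components), I obtain
\[
\dim \Phi_F = (|F| - c(\calB|_F)) + ((n-|F|) - c(\calB/F)) = n - c(\calB|_F) - c(\calB/F).
\]
Since a connected matroid on $n \geq 2$ elements is loopless and coloopless, $\dim P_\calB = n-1$, and so facets have dimension $n-2$. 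Thus $\Phi_F$ is a facet exactly when $c(\calB|_F) = c(\calB/F) = 1$, proving the facet characterization.

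For step (iii), the cleanest route is to observe that any polytope is determined by its facet inequalities together with its affine hull: step (ii) pinpoints the facets as precisely the connected-flat inequalities, and the affine hull of $P_\calB$ inside the simplex is all of $\{\sum x_i = k\}$, so this already yields the claimed description. Alternatively one may argue combinatorially: if $\calB|_F = \calB_1 \oplus \calB_2$ on $F_1 \sqcup F_2$, then the $\calB$-closures $\overline{F_1}, \overline{F_2}$ are subflats of $F$ whose inequalities sum via nonnegativity to imply the $F$-inequality; dually, if $\calB/F = \calM_1 \oplus \calM_2$ on $G_1 \sqcup G_2$, then because $F$ is a flat $\calB/F$ is loopless, so $F \cup G_1$ and $F \cup G_2$ are flats and their inequalities combined with $\sum_i x_i = k$ imply the $F$-inequality. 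The main obstacle I anticipate in the direct route is arranging this combinatorial reduction to terminate at connected flats only (replacement flats from contraction-decompositions can reintroduce disconnected restrictions); the abstract argument via the dimension computation in step (ii) avoids this bookkeeping entirely.
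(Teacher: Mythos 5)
The paper states this result as a citation to Feichtner--Sturmfels (Proposition 2.6 in \cite{feichtner:matroid_polytopes}) without reproducing a proof, so there is no internal argument to compare against. Your argument --- reducing to flat inequalities by passing to closures, identifying the face $\Phi_F = P_\calB \cap \{\sum_{i\in F} x_i = \mathrm{rk}(F)\}$ with the product $P_{\calB|_F}\times P_{\calB/F}$ via the basis decomposition $B = (B\cap F)\sqcup(B\setminus F)$, and then invoking $\dim P_\calM = |E(\calM)| - c(\calM)$ to read off when $\Phi_F$ has codimension one --- is exactly the proof strategy of the cited source, and it is correct; you are also right that the dimension count sidesteps the termination bookkeeping that the purely combinatorial reduction would require.
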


In light of this theorem, flats $F$ of $\calB$ such that $\calB|_{F}$ and $\calB/F$ are connected are called {\textit{flacets}}. All flacets containing more than one element are cyclic flats. To see this, note that if $|F| > 1$ and $\calB|_{F}$ is connected, then for every $f \in F$ there must be some basis $B_f$ of $\calB|_{F}$ not containing $f$. Then, $f \cup B_{f}$ yields a circuit containing $f$.

The {\textit{matroid strata}}, $V_\calB \subseteq \Gr(k,n)$, is the set of points in $\Gr(k,n)$ representing $\calB$. Matroid strata are also called thin Schubert cells or GGMS strata, after Gelfand, Goresky, MacPherson, and Serganova. All these names are misleading, as matroid strata are not in general cells and do not stratify the Grassmannian.

\subsection{Positroids} \label{sec:positroid}

\begin{definition} \label{def:positive_grassmannian}
The {\textit{positive Grassmannian}}, $\Gr_{\geq 0}(k,n)$, is the subset of $\Gr(k,n)$ where all Pl\"ucker coordinates have the same sign.
\end{definition}

\begin{definition} \label{def:positroid}
A {\textit{positroid}} is a matroid represented by some point in $\Gr_{\geq 0}(k,n)$.
\end{definition}

When $\calB$ is a positroid, $\overline{V_{\calB}}$ is a {\textit{positroid variety}}. Ardila, Rinc\'on, and Williams characterize positroids in terms of their flacets and connected components in \cite{ardila:non-crossing_partitions}.

\begin{theorem}[Proposition 5.6 in \cite{ardila:non-crossing_partitions}] \label{thm:positroid_flacets}
A connected matroid is a positroid if and only if every flacet is a cyclic interval.
\end{theorem}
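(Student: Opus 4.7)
The plan is to pass to the matroid polytope $P_\calB$ and combine the facet–flacet correspondence from the preceding theorem with the polyhedral characterization of positroids going back to Postnikov: a rank $k$ matroid $\calB$ on $[n]$ is a positroid if and only if $P_\calB$ can be cut out, inside the hypersimplex $\{x \geq 0, \sum_i x_i = k\}$, by half-space inequalities of the form $\sum_{j \in I} x_j \leq c_I$ with $I$ a cyclic interval of $[n]$. This is a polyhedral reformulation of the standard description $\calB = \bigcap_{i = 1}^{n} \calM_i$, where $\calM_i$ is a Schubert matroid with respect to the cyclic order starting at $i$, combined with the observation that the matroid polytope of such a Schubert matroid is cut out precisely by the inequalities supported on cyclic intervals of the form $\{i, i+1, \ldots, \ell\}$ (indices read modulo $n$).

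For the forward direction, suppose $\calB$ is a connected positroid. Then $P_\calB$ admits a cyclic-interval inequality description, so every facet of $P_\calB$ is defined by such an inequality. By the preceding theorem, each facet is also defined by a flacet inequality $\sum_{j \in F} x_j \leq \mathrm{rk}(F)$. Facet-defining functionals are unique up to positive scalar, and here both descriptions have $\{0,1\}$-valued coefficient vectors, so each flacet $F$ must coincide with a cyclic interval.

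For the reverse direction, suppose $\calB$ is connected and every flacet is a cyclic interval. By the preceding theorem $P_\calB$ is cut out by flacet inequalities plus the hypersimplex constraints, and hence entirely by cyclic-interval inequalities. For each $i \in [n]$, let $\calM_i$ be the Schubert matroid in the cyclic order starting at $i$ whose ranks on the initial cyclic intervals $\{i, i+1, \ldots, \ell\}$ match those of $\calB$; this is well-defined because the cyclic initial rank sequence of a matroid is weakly increasing with unit jumps. The inequalities $\sum_{j \in I} x_j \leq \mathrm{rk}_\calB(I)$ for all cyclic intervals $I$ are valid on $P_\calB$ as matroid rank inequalities, and include all flacet inequalities, so $P_\calB = \bigcap_i P_{\calM_i}$. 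Taking $\{0,1\}$-vertices of both sides yields $\calB = \bigcap_i \calM_i$ as collections of bases, which is the defining description of $\calB$ as a positroid.

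The main subtlety is in the reverse direction, where one must confirm that the polyhedral equality $P_\calB = \bigcap_i P_{\calM_i}$ really upgrades to the set-theoretic equality $\calB = \bigcap_i \calM_i$ at the level of bases, and that the cyclic rank data of $\calB$ in fact assembles into genuine Schubert matroids rather than merely polyhedra. Both points are standard but need care: the first because matroid polytopes are $0/1$-polytopes whose vertex sets determine the matroid, the second because of monotonicity and submodularity of $\mathrm{rk}_\calB$.
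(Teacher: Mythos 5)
The paper states this as Proposition 5.6 of Ardila--Rinc\'on--Williams and cites it without reproducing a proof, so there is no in-paper argument to compare against; I will assess your proposal on its own terms and against the argument the source actually uses.

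Your architecture is essentially the same as the published one: combine the Feichtner--Sturmfels facet description of a connected matroid polytope with the fact that positroid polytopes are precisely the matroid polytopes cut out by cyclic-interval half-spaces, then match facet-defining inequalities. The logic in both directions is sound. The one substantive concern is that the step you describe as merely ``a polyhedral reformulation of the standard description $\calB = \bigcap_i \calM_i$'' is in fact the hardest ingredient, not a formal consequence of Oh's decomposition. Knowing $\calB = \bigcap_i \calM_i$ as collections of bases gives only the inclusion $P_\calB \subseteq \bigcap_i P_{\calM_i}$; in general $P_{\calM \cap \calN} \subsetneq P_\calM \cap P_\calN$ even when the intersection is a matroid, so the polytope equality $P_\calB = \bigcap_i P_{\calM_i}$ does not follow by ``taking polytopes of both sides.'' That equality --- equivalently, that a positroid polytope is alcoved --- is a theorem of Lam and Postnikov with its own proof, and it is precisely what Ardila--Rinc\'on--Williams import as a black box. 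You should cite it separately rather than present it as a rephrasing of Oh's theorem. With that citation in place, your forward direction is complete; the facet-functional uniqueness you invoke also works cleanly, since if $\mathbf{1}_F = \lambda\mathbf{1}_G + \mu\mathbf{1}$ with $\lambda > 0$ and $G$ a proper nonempty subset, comparing a coordinate in $G$ with one outside $G$ forces $\lambda = 1$, $\mu = 0$, $F = G$. Your reverse direction is also correct but a bit longer than necessary: once you have $P_\calB = \bigcap_i P_{\calM_i}$ you can invoke the Lam--Postnikov characterization directly to conclude $\calB$ is a positroid, without the detour through $\{0,1\}$-vertices and the Grassmann-necklace description; that extra step is not wrong, just redundant given what you are already assuming.
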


\begin{theorem}[Theorem 7.6 in \cite{ardila:non-crossing_partitions}] \label{thm:positroids_noncrossing_partitions}
A matroid $\calB$ is positroid if and only if each connected component of $\calB$ is a positroid and the connected components of $\calB$ form a noncrossing partition of $[n]$.
\end{theorem}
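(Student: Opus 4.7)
The plan is to prove both directions of the biconditional. The forward direction uses a shuffle-sign computation on Plücker coordinates of a positive matrix representative, combined with the fact that minors of positroids are positroids. The reverse direction glues together positive representatives of each component along the nested structure of the noncrossing partition.

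For the forward direction, let $V \in \Gr_{\geq 0}(k,n)$ represent $\calB$, and write $\calB = \calB_1 \oplus \cdots \oplus \calB_r$ with connected components on ground sets $E_1, \ldots, E_r$ and $k_i = \mathrm{rk}(\calB_i)$. First, each $\calB_i$ is obtained from $\calB$ by a sequence of deletions respecting the cyclic order, so each $\calB_i$ is a positroid (minors of positroids along the cyclic order are positroids, proven by deleting columns of a positive representative and rescaling). To show $\{E_i\}$ is noncrossing, use that each $E_i$ is a separator to row-reduce $V$ to a matrix $\tilde V$ whose first $k_1$ rows are supported in $E_1$, next $k_2$ in $E_2$, and so on; call these blocks $\tilde V^{(i)}$. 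All Plücker coordinates of $\tilde V$ then share a common sign, and any nonzero one satisfies
\begin{equation*}
\Delta_I(\tilde V) \;=\; \mathrm{sgn}(\sigma_I) \prod_{i=1}^{r} \Delta_{I \cap E_i}(\tilde V^{(i)}),
\end{equation*}
where $\sigma_I$ records the interleaving of $I$ across the blocks. Suppose for contradiction two components cross: there exist $a < b < c < d$ with $a, c \in E_i$ and $b, d \in E_j$. Since the other components contribute nonempty bases (the relevant $\calB_\ell$ are nontrivial), one can complete the partial choices $\{a, b\}, \{a, d\}, \{c, b\}, \{c, d\}$ in the $i,j$ positions to full bases $I_{ab}, I_{ad}, I_{cb}, I_{cd}$ of $\calB$ that agree on all other blocks. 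The shuffle signs $\mathrm{sgn}(\sigma)$ for these four index sets do not all agree, while the block Plücker factors form a rank-one multiplicative pattern that cannot absorb the sign mismatch; writing out the four positivity inequalities produces a contradiction. Hence the partition is noncrossing.

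For the reverse direction, induct on $r$. The case $r = 1$ is given. In general, a noncrossing partition always contains an innermost block, which can be chosen to be an $E_i$ that is a cyclic interval of $[n]$. Let $V_i \in \Gr_{\geq 0}(k_i, E_i)$ represent $\calB_i$, and by induction let $V' \in \Gr_{\geq 0}(k - k_i, [n] \setminus E_i)$ represent $\bigoplus_{j \ne i} \calB_j$, noting that removing a cyclic interval from a noncrossing partition leaves a noncrossing partition of the complement with the induced cyclic order. Assemble $V$ by placing the rows of $V_i$ into the columns indexed by $E_i$ (and zero elsewhere), placing the rows of $V'$ into the remaining columns (and zero elsewhere). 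Because $E_i$ is a cyclic interval of $[n]$, the shuffle sign $\mathrm{sgn}(\sigma_I)$ in the Plücker factorization above is identically $+1$, so every nonzero $\Delta_I(V)$ is a product of positive numbers; hence $V \in \Gr_{\geq 0}(k,n)$ and represents $\calB$.

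The main obstacle is the sign-inconsistency step in the forward direction: one must select the four witnessing index sets so that the other components contribute a common sign to all four, isolating the discrepancy in the crossing quadruple. This is subtle because $\sigma_I$ depends on how every index of $I$ is distributed across the blocks, not just the four indices $a,b,c,d$, so the completion of the partial bases must be done in a coordinated way. Handling degenerate components whose ranks or ground set sizes are too small to allow free completion requires a short case analysis that reduces to the generic case by adjoining parallel elements and then deleting.
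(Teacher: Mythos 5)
The paper does not prove this statement; it cites it directly as Theorem 7.6 of \cite{ardila:non-crossing_partitions}, so there is no in-paper proof to compare against. Your proposal is therefore being judged on its own merits. The route you sketch is a hands-on linear-algebra argument, whereas the cited source proves the statement through combinatorial machinery (Grassmann necklaces/decorated permutations and cyclic flats); your approach is genuinely different, and if it could be made airtight it would be a pleasantly elementary alternative.

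However, there are two concrete gaps. First, in the reverse direction, the claim that ``because $E_i$ is a cyclic interval of $[n]$, the shuffle sign $\mathrm{sgn}(\sigma_I)$ is identically $+1$'' is false when the cyclic interval wraps around $n$. Take $n = 4$, $E_1 = \{4,1\}$, $E_2 = \{2,3\}$, $k_1 = k_2 = 1$, with nonnegative representatives $(1,1)$ in each block: the assembled matrix $\left(\begin{smallmatrix} 1 & 0 & 0 & 1 \\ 0 & 1 & 1 & 0 \end{smallmatrix}\right)$ has $\Delta_{12} = 1$ but $\Delta_{24} = -1$, so it is not nonnegative even though the partition is noncrossing. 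The construction is rescued only by the cyclic-shift sign twist on the wrapped columns (here, flipping the sign of column $4$), which your write-up does not invoke; without it the inductive step does not close. Second, in the forward direction, the sign-parity claim for $\sigma_{ab}\sigma_{cd}\sigma_{ad}^{-1}\sigma_{cb}^{-1}$ does not follow for arbitrary choices of the bases $B_i^a$, $B_i^c$ of $\calB_i$ (and $B_j^b$, $B_j^d$): the inversions between $B_i^x$ and $B_j^y$ involve all elements of these sets, not only the witnesses $a,b,c,d$. The parity argument goes through cleanly only if one arranges $B_i^c = B_i^a \setminus a \cup c$ and $B_j^d = B_j^b \setminus b \cup d$ (single symmetric exchanges), so that the extra inversion counts telescope to a net odd discrepancy; you flag this as ``subtle'' but do not establish that such coherent basis choices exist or how to reduce to that case. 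Both gaps are repairable (the first with the cyclic twist, the second by restricting to $E_i \cup E_j$ and then selecting a pair of bases differing by a single exchange), but as written the proof is incomplete.
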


When $\calB$ is a positroid, Marsh and Reitsch in \cite{marsh:parametrizations} define a cell $\MR(\calB) \cong (\rr^{\ast})^{\dim(V_\calB)}$ and a matrix $\MR_{\calB}(\mathbf{x})$ parameterizing this cell, which we respectively call the {\textit{Marsh-Reitsch cell}} and {\textit{Marsh-Reitsch matrix}} of $\calB$. This cell this cell contains $V_{\calB}$, and its positive part provides a parameterization of $V_\calB \cap Gr_{\geq 0}(k,n)$. Marsh and Reitsch's construction gives a decomposition of any flag variety; explicit combinatorial details for the Grassmannian are due to Postnikov in \cite{postnikov:total_positivity}. We collect a few facts about $\MR(\calB)$.

\begin{theorem} \label{thm:facts_about_marsh_reitsh_cell}
Let $\calB$ be a positroid and $\MR(\calB) \subset \Gr(k,n)$ be the associated Marsh-Reitsch cell.
\begin{itemize}
\item[(i)] $\MR(\calB) \cong (\rr^{\ast})^{\dim(V_\calB)}$, Proposition 5.2 in \cite{marsh:parametrizations}.
\item[(ii)] There is a matrix $\MR_{\calB}(\mathbf{x})$ in $\dim(V_\calB)$ indeterminants, such that evaluating $\mathbf{x} \in (\rr^{\ast})^{\dim(V_\calB)}$ provides a parameterization of $\MR(\calB)$, Proposition 5.2 in \cite{marsh:parametrizations}.
\item[(iii)] $V_{\calB} \subseteq \MR(\calB)$, Corollary 7.9 in \cite{talaska:network_parameterizations}.
\item[(iv)] $V_\calB \cap Gr_{\geq 0}(k,n) = \MR(\calB) \cap Gr_{\geq 0}(k,n) \cong \rr_{> 0}^{\dim(V_\calB)}$. This space is parameterized by evaluating $\MR_{\calB}(\mathbf{x})$ at $\mathbf{x} \in (\rr_{> 0})^{\dim(V_\calB)}$, Theorem 3.8 in \cite{postnikov:total_positivity}.
\end{itemize}
\end{theorem}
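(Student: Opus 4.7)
The plan is modest: this theorem is stated as a compilation of already established results, each explicitly cited, so the proof amounts to invoking the four references and checking that the bookkeeping matches. My main job is to make sure the conventions agree between Marsh--Reitsch's flag-variety setting, Talaska--Williams's network parameterizations, and Postnikov's Le-diagram/plabic graph model on the Grassmannian, so that the objects the three papers call a ``cell'' really are the same subset $\MR(\calB) \subseteq \Gr(k,n)$.

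For parts (i) and (ii), I would recall the Marsh--Reitsch construction. Starting from a positroid $\calB$, one chooses a reduced word for an associated permutation (equivalently, the Le-diagram of $\calB$), and forms an explicit product of elementary unipotent and torus matrices indexed by that word. The resulting parameterization is, by the Deodhar-type argument in Proposition 5.2 of \cite{marsh:parametrizations}, an isomorphism from $(\rr^*)^{\dim(V_\calB)}$ onto its image in the relevant flag cell. Projecting from the flag variety to $\Gr(k,n)$ and taking the row span produces the matrix $\MR_\calB(\mathbf{x})$ whose existence is asserted in (ii), and the image is $\MR(\calB)$, giving (i).

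For (iii), I would observe that once $\MR(\calB)$ has an explicit matrix parameterization $\MR_\calB(\mathbf{x})$, its generic matroid can be computed directly from the Pl\"ucker coordinates of $\MR_\calB(\mathbf{x})$, which are Laurent polynomials in $\mathbf{x}$ whose Newton polytope is governed by the Le-diagram data. Corollary 7.9 in \cite{talaska:network_parameterizations} does exactly this: they show that the nonvanishing Pl\"ucker coordinates of a generic point of $\MR(\calB)$ are precisely the bases of $\calB$, so that $V_\calB$, the set of points with matroid exactly $\calB$, lies inside $\MR(\calB)$. For (iv), the inclusion $\MR(\calB) \cap \Gr_{\geq 0}(k,n) \subseteq V_\calB \cap \Gr_{\geq 0}(k,n)$ is immediate from the positivity of the subtraction-free Pl\"ucker formulas in the parameterization (evaluating $\mathbf{x}$ at positive reals makes every basis Pl\"ucker coordinate positive, and every non-basis coordinate zero); the reverse inclusion and the homeomorphism with $\rr_{>0}^{\dim(V_\calB)}$ are Postnikov's Theorem 3.8 in \cite{postnikov:total_positivity}, proved by inverting the parameterization via boundary measurements on the plabic graph.

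The only genuine obstacle is the reconciliation of conventions: Marsh--Reitsch index their parameterization by reduced expressions in the symmetric group, Talaska--Williams by plabic networks, and Postnikov by Le-diagrams or decorated permutations. All three are known to agree via standard dictionaries (Postnikov, Section 19), so once those dictionaries are invoked the four items line up exactly as stated, and no further argument is needed.
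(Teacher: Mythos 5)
The paper offers no proof of this theorem at all; it is a compilation of facts drawn from the literature, with each clause carrying its own citation, and your proposal correctly recognizes this and handles it the same way. The one slip worth flagging is in your gloss of part (iii): knowing that the Pl\"{u}cker coordinates of a \emph{generic} point of $\MR(\calB)$ vanish exactly off the bases of $\calB$ only tells you that $V_\calB \cap \MR(\calB)$ is dense in $\MR(\calB)$, not that $V_\calB \subseteq \MR(\calB)$; the cited Corollary 7.9 of Talaska--Williams is a genuinely stronger statement (it identifies the Deodhar/Marsh--Rietsch component containing any given point directly), and the ``so that'' in your explanation is a non sequitur even though the citation itself does carry the claim.
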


\subsection{Transversal Matroids} \label{sec:transversal_matroids}

Let $\Gamma_{\calS}$ be the bipartite graph where one part has nodes labelled $S_1, S_2, \dots, S_k$, the other part has nodes labelled $1, 2, \dots, n$, and there is an edge between $S_i$ and $j$ if and only if $j \in S_i$. Define $\calB(\calS) \subseteq \binom{[n]}{k}$ to be the collection of subsets $B$ such that there is a matching saturating $B$ in $\Gamma_{\calS}$. $\calB(\calS)$ is a matroid. Matroids obtained in this way are {\textit{transversal matroids}}. The set $\calS$ is a {\textit{presentation}} of $\calB(\calS)$. If $\calB$ is a rank $k$ transversal matroid, then $\calB = \calB(\calS)$ for some $\calS$ with $|\calS| = k$. We consider only these presentations.

In general, a transversal matroid has multiple presentations. Presentations are partially ordered by $\calS \preceq \calS'$ if there is some ordering of the sets $S_1, \dots, S_k$ and $S'_1, \dots, S'_k$ in $\calS$ and $\calS'$ such that $S_i \subseteq S'_i$ for each $i$. A transversal matroid has a unique maximal presentation, and usually has several minimal presentations. For more background on transversal matroids and their presentations, see \cite{brualdi:transversal_matroids}.

\begin{example} \label{ex:presentations_of_transversal_matroids}
Let $\calS$ be the set system from Example \ref{ex:nmd}. The bipartite graph $\Gamma_{\calS}$ is the following.
\begin{displaymath}
\begin{tikzpicture}
\draw (-2,.8) node (S1) {$S_1$};
\draw (0,.8) node (S2) {$S_2$};
\draw (2,.8) node (S3) {$S_3$};
\draw (-3,-.8) node (A) {$1$};
\draw (-1,-.8) node (B) {$2$};
\draw (1,-.8) node (C) {$3$};
\draw (3,-.8) node (D) {$4$};
\draw (S1) -- (A);
\draw (S1) -- (C);
\draw (S1) -- (D);
\draw (S2) -- (A);
\draw (S2) -- (B);
\draw (S3) -- (B);
\draw (S1) -- (D);
\draw (S2) -- (A);
\draw (S3) -- (C);
\end{tikzpicture}
\end{displaymath}

In $\Gamma_{\calS}$, there is a matching saturating every element of $\binom{[n]}{4}$. So, $\calB(\calS) = \binom{[n]}{4}$. Let $\calS' = \calS \setminus S_1 \cup \{3,4\}$. So, $\Gamma_{\calS'}$ is the graph obtained by deleting edge from $S_1$ to $1$. In $\Gamma_{\calS'}$, there is also a matching saturating every element of $\binom{[n]}{4}$. So, $\calS' \prec \calS$.
\end{example}

The following theorem appears several times in the literature, for instance following from Theorems 3.2 and 3.7 in \cite{brualdi:transversal_matroids}, or as Lemma 2 in \cite{bondy:transversal_matroids}.

\begin{theorem} \label{thm:minimal_presentation}
A presentation $\calS$ of the transversal matroid $\calB(\calS)$ is minimal if and only if each set in $\calS$ is a distinct cocircuit of $\calB(\calS)$.
\end{theorem}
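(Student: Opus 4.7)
The plan is to prove the two directions of the biconditional separately. For the ``if'' direction, suppose each $S_i$ is a cocircuit and the $S_i$ are pairwise distinct, and fix $i$ and $j \in S_i$. Since $S_i$ is a minimal hitting set for bases, $S_i \setminus \{j\}$ fails to hit some basis $B$; combined with the fact that $S_i$ itself hits $B$, this forces $B \cap S_i = \{j\}$. Any matching in $\Gamma_\calS$ saturating $B$ must then send $S_i \mapsto j$, so deleting the edge $S_i - j$ destroys $B$ as a representable basis, proving that $\calB(\calS) \neq \calB((\calS \setminus \{S_i\}) \cup \{S_i \setminus \{j\}\})$ and hence that $\calS$ is minimal.

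For the ``only if'' direction, assume $\calS$ is minimal. Two short observations come first. Each $S_i$ hits every basis: a matching saturating a basis $B$ uses all $k$ sets of $\calS$, so $S_i$ is matched to an element of $S_i \cap B$. The $S_i$ are pairwise distinct: if $S_i = S_{i'}$, then every matching saturating a basis $B$ matches these two sets to two distinct elements of $S_i \cap B$, forcing $|B \cap S_i| \geq 2$ for every basis; for any $j \in S_i$ one can then reroute any matching that sends $S_i$ to $j$ by swapping $S_i$ with another element of $S_i \cap B$ and sending $S_{i'}$ to $j$, which shows $\calB(\calS)$ is unchanged after removing $j$ from $S_i$, contradicting minimality.

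The substantive step is to show that each $S_i$ is minimal as a hitting set, i.e., that for every $j \in S_i$ there is a basis $B$ with $B \cap S_i = \{j\}$. Phrasing this in the bipartite graph $\Gamma_\calS$, it is equivalent to the existence, for each $i$, of a matching of $\calS \setminus \{S_i\}$ into $[n] \setminus S_i$; extending by $S_i \mapsto j$ then produces the desired basis. I would prove this by Hall's theorem in contrapositive form: suppose some $A \subseteq \calS \setminus \{S_i\}$ violates Hall, i.e.\ $\bigl|\bigcup_{S_l \in A}(S_l \setminus S_i)\bigr| < |A|$. Combined with the global Hall condition $\bigl|\bigcup_{S_l \in A} S_l\bigr| \geq |A|$, this forces $\bigl(\bigcup_{S_l \in A} S_l\bigr) \cap S_i \neq \emptyset$; pick $j_0$ in this intersection. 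The plan is to argue that $j_0$ can be removed from $S_i$ without changing $\calB(\calS)$, contradicting minimality. The main obstacle lies here: given any basis $B \in \calB(\calS)$ and any matching $\mu$ in $\Gamma_\calS$ saturating $B$ with $\mu(S_i) = j_0$, one must produce an alternative matching that avoids the edge $S_i - j_0$. The Hall deficiency forces $|B \cap S_i| \geq 2$, so a candidate target $j_1 \in B \cap S_i$ with $j_1 \neq j_0$ is available; the task is then to find an alternating path $S_i - j_1 - \cdots - j_0$ in $\Gamma_\calS$ restricted to $\calS \cup B$ and swap along it. A clean execution uses a Dulmage--Mendelsohn analysis of the bipartite subgraph on $A \cup \{S_i\}$ and its neighborhood, leveraging the extra slack provided by $S_i$; this is the combinatorial argument worked out in detail in \cite{brualdi:transversal_matroids} and \cite{bondy:transversal_matroids}, to which I would appeal for the final technical verification.
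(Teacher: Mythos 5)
The paper does not give its own proof of this theorem: it explicitly states that the result ``appears several times in the literature'' and cites Theorems 3.2 and 3.7 of Brualdi--Dinolt and Lemma 2 of Bondy. Your proposal therefore cannot be compared against a proof in the paper; instead it should be judged on its own merits and against what a self-contained proof would need.

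Your ``if'' direction is essentially complete and correct. Given that each $S_i$ is a cocircuit, for every $j \in S_i$ there is a basis $B$ with $B \cap S_i = \{j\}$, and since $|\calS| = k = |B|$, any matching in $\Gamma_\calS$ saturating $B$ is a perfect matching from $\calS$ to $B$, hence forced to send $S_i \mapsto j$. You should add one sentence observing that it suffices to rule out single-element deletions: if $\calS' \prec \calS$ then there is an intermediate $\calS''$ obtained from $\calS$ by deleting one element from one set, and $\calB(\calS') \subseteq \calB(\calS'') \subsetneq \calB(\calS)$ by monotonicity of $\calB(\cdot)$. (You assume distinctness in this direction, but never use it; in fact distinctness is automatic once all the $S_i$ are cocircuits, since repeated sets force $|B \cap S_i| \geq 2$ for every basis and hence are not minimal hitting sets. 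It would not hurt to say so.)

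In the ``only if'' direction, your two preliminary observations are correct, and your reformulation of the cocircuit condition as the existence of a matching of $\calS \setminus \{S_i\}$ into $[n] \setminus S_i$ is right (since $|\calS| = \mathrm{rk}(\calB)$, the existence of such a matching for one $j$ gives it for all $j$). You correctly isolate the crux: from a Hall violator $A$ one gets $j_0 \in N(A) \cap S_i$, and one must show $j_0$ can be deleted from $S_i$ without changing the matroid, which amounts to showing the edge $S_i - j_0$ is not forced in any $\Gamma[\calS, B]$. You defer this alternating-path / Dulmage--Mendelsohn argument to the same references the paper cites. That is consistent with the level of detail the paper itself supplies, but be aware this is where the real combinatorial work of the theorem lies; the claim that \emph{any} $j_0 \in N(A) \cap S_i$ is removable (not merely some $j_0$) is true but not obvious, and if you intend this as a standalone proof you would need to carry it through rather than gesture at it. As written, the structure of your argument is sound and matches the standard route; you have filled in more of the picture than the paper does.
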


\begin{theorem}[Theorem 2 in \cite{bondy:transversal_matroids}] \label{thm:min_presentations_have_same_sizes}
Let $\calS = \{S_1, \dots, S_k\}$ and $\calS' = \{S'_1, \dots, S'_k\}$ be minimal presentations of $\calB(\calS)$. The transversal matroid $\calB(\calS)$ has a unique maximal presentation $\{M_1, \dots, M_k\}$. We may reindex the sets in $\calS$ and $\calS'$ such that $S_i \cup S'_i \subseteq M_i$ for all $i$. Here, $|S_i| = |S'_i|$ for all $i$.
\end{theorem}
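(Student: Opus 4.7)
The plan is to address the statement in three pieces: existence and uniqueness of the maximal presentation, independent reindexing of $\calS$ and $\calS'$, and the size equality $|S_i| = |S'_i|$. Throughout I use Theorem~\ref{thm:minimal_presentation}, which identifies the sets of a minimal presentation with the distinct cocircuits of $\calB(\calS)$.

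For the maximal presentation, I would start from any presentation $\calT = \{T_1, \ldots, T_k\}$ of $\calB = \calB(\calS)$ and define $M_i$ to consist of $T_i$ together with every $j \notin T_i$ such that replacing $T_i$ by $T_i \cup \{j\}$ still yields a presentation of $\calB$. The key technical step is to show these single-element enlargements are compatible: if $j_1, \ldots, j_r$ each enlarge $T_i$ validly on their own, then $T_i \cup \{j_1, \ldots, j_r\}$ also does. This follows from an augmenting-path argument in the bipartite graph $\Gamma_\calT$ --- any spurious basis $B$ produced by enlarging $T_i$ by several elements at once can, by rerouting the matching, already be produced by adding just one element. Consequently $\{M_1, \ldots, M_k\}$ is a presentation of $\calB$, and since membership in $M_i$ admits an intrinsic characterization (independent of the starting $\calT$), the maximal presentation is unique.

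For the reindexing, the universal property of the maximal presentation gives $\calS \preceq \{M_1, \ldots, M_k\}$ and $\calS' \preceq \{M_1, \ldots, M_k\}$. Unpacking the definition of $\preceq$ yields independent permutations $\pi$ and $\pi'$ of $[k]$ with $S_{\pi(i)} \subseteq M_i$ and $S'_{\pi'(i)} \subseteq M_i$. Relabeling $\calS$ by $\pi$ and $\calS'$ by $\pi'$ separately gives $S_i \cup S'_i \subseteq M_i$ for all $i$.

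The hardest part is the size equality. By Theorem~\ref{thm:minimal_presentation}, each $S_i$ and $S'_i$ is a cocircuit of $\calB$, so $S_i = [n] \setminus H_i$ and $S'_i = [n] \setminus H'_i$ for hyperplanes $H_i, H'_i$ containing $F_i := [n] \setminus M_i$. The goal is to show $|H_i| = |H'_i|$, i.e., that the cardinality of the cocircuit placed at position $i$ is an invariant of $M_i$. I would establish this by showing that each $F_i$ is a cyclic flat of $\calB$, and that the flacet structure of $\calB$ around $F_i$, combined with the minimality-induced constraint that the $k$ chosen cocircuits be distinct, forces a unique size at each position. A global consistency check is provided by $\sum_i |S_i| = \sum_i |S'_i|$, which follows from $\mathrm{nmd}(\calS) = \dim L(\calB(\calS)) = \mathrm{nmd}(\calS')$ via Theorem~\ref{thm:dimension_and_minimal_presentations}; the substance of Bondy's original argument lies in upgrading this sum identity to the claimed term-by-term equality, and I expect any careful proof to essentially retrace that route.
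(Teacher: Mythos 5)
The paper does not prove this statement---it is quoted directly as Theorem~2 of Bondy's \emph{Presentations of transversal matroids}, so there is no in-house proof to compare yours against; the review below is of your argument on its own terms. Your three-part skeleton (build the maximal presentation, reindex, compare sizes) is the natural one, and the compatibility observation in Part~1---that a matching assigns the node $T_i$ to a single column, so enlarging $T_i$ by several valid elements at once creates no new bases beyond those created by single-element enlargements---is correct and is indeed the standard argument. However, Part~1 still has a gap: you assert without proof that ``membership in $M_i$ admits an intrinsic characterization independent of the starting $\calT$.'' As defined, each $M_i$ is grown out of a chosen presentation $\calT$; showing that two different starting presentations produce the same family $\{M_1,\dots,M_k\}$ (up to reordering) is precisely the content of uniqueness, and the proposal skips it rather than proves it.

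The more serious gap is in Part~3, which is the substantive assertion of the theorem, and which your write-up explicitly does not establish. You correctly identify that each $S_i$ and $S'_i$ is a cocircuit contained in $M_i$, and the sum identity $\sum_i |S_i| = \sum_i |S'_i|$ via Theorem~\ref{thm:dimension_and_minimal_presentations} is a sound sanity check, but ``the flacet structure of $\calB$ around $F_i$ \dots\ forces a unique size at each position'' is a description of the desired conclusion, not an argument. Two specific points would need to be repaired. First, the claim that $F_i = [n]\setminus M_i$ is a cyclic flat is not justified: the sets $M_i$ of the maximal presentation are not cocircuits in general (e.g.\ for $U_{k,n}$ every $M_i=[n]$), so $F_i$ is not a hyperplane, and you would need a separate argument that its complement enjoys the flat and cyclic properties you invoke. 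Second, when $M_i = M_j$ for distinct $i,j$, the two reindexings $\pi$ and $\pi'$ obtained independently from the universal property need not send equal-sized cocircuits to the same slot; the theorem promises a single reindexing that achieves both the containment and the size equality, and composing two arbitrary witnesses of $\preceq$ does not obviously produce one. As it stands, the proposal is a plausible outline that names the right objects but does not prove the theorem's central claim, and you acknowledge as much in the final sentence.
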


If $\Gamma_{\calS}$ is disconnected, then the matroid $\calB(\calS)$ is the direct sum of the transversal matroids coming from the connected components of $\Gamma_{\calS}$. If $\calS$ is a minimal presentation of $\calB(\calS)$, then $\calB(\calS)$ is a connected matroid if and only if $\Gamma_{\calS}$ is connected.

Given $\calT \subseteq \calS$, the set
\begin{equation} \label{eqn:cyclic_flat_transversal_matroid}
F(\calT) = \left\{ i \in [n] : i \notin S \quad \forall \quad S \notin \calT \right\}
\end{equation}
\noindent
is a flat of the matroid $\calB(\calS)$.

\begin{lemma}[Corollary 2.8 in \cite{bonin:introduction_to_transversal_matroids}] \label{lem:cyclic_flats_transversal_matroid}
Let $F$ be a cyclic flat of the transversal matroid $\calB(\calS)$. Then, $F = F(\calT)$ for some $\calT \subseteq \calS$ such that $|\calT| = \mathrm{rk}(F)$. In particular, if $\calB$ is connected, all flacets of $\calB$ containing more than one element are of the form $F(\calT)$.
\end{lemma}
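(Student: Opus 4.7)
The strategy is to exhibit a specific $\calT \subseteq \calS$ and verify the two required properties. My choice is $\calT := \{S \in \calS : S \cap F \neq \emptyset\}$, the sets in the presentation that meet $F$. The containment $F \subseteq F(\calT)$ is immediate: if $j \in F$, then any $S \in \calS$ with $j \in S$ has $j \in S \cap F \neq \emptyset$, so $S \in \calT$; hence every set of $\calS$ containing $j$ lies in $\calT$, which is exactly the condition for $j \in F(\calT)$. The inequality $\mathrm{rk}(F) \leq |\calT|$ is also immediate, because any matching in $\Gamma_{\calS}$ realizing $\mathrm{rk}(F)$ consists of $\mathrm{rk}(F)$ edges whose set-endpoints all touch $F$ and therefore belong to $\calT$.

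The heart of the argument is the reverse inequality $|\calT| \leq \mathrm{rk}(F)$, which is where the cyclic-flat hypothesis must be used. By K\"onig's theorem, this inequality is equivalent to the existence of a matching in the bipartite subgraph of $\Gamma_{\calS}$ on $\calT \cup F$ that saturates $\calT$. If Hall's marriage condition fails, then some $\calT' \subseteq \calT$ satisfies $\bigl|\bigcup_{S \in \calT'}(S \cap F)\bigr| < |\calT'|$; write $G$ for this union. Then every set in $\calT'$ meets $F$ only inside $G$, so elements of $F \setminus G$ are reachable in $\Gamma_{\calS}$ only through sets in $\calT \setminus \calT'$. Since $F$ is a union of circuits, pick a circuit $C \subseteq F$ meeting $F \setminus G$. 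Elements of $C \setminus G$ can be matched only through $\calT \setminus \calT'$, while elements of $C \cap G$ can draw on all of $\calT$; tracing the matching constraints and using that $|G| < |\calT'|$, the Hall-deficiency propagates into $C$ and forces $\mathrm{rk}(C) < |C| - 1$, contradicting that $C$ is a circuit. I expect making this propagation rigorous to be the main obstacle of the proof.

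Once $|\calT| = \mathrm{rk}(F)$ is established, the reverse inclusion $F(\calT) \subseteq F$ is automatic. The set $F(\calT)$ is a flat of $\calB(\calS)$ by the remark in the paragraph defining $F(\calT)$, and the K\"onig-type rank formula $\mathrm{rk}(X) = \min_{\calT_0 \subseteq \calS}\bigl(|\calT_0| + |X \setminus F(\calT_0)|\bigr)$ applied with $\calT$ itself as witness gives $\mathrm{rk}(F(\calT)) \leq |\calT| = \mathrm{rk}(F)$. Since $F \subseteq F(\calT)$ are two flats of equal rank --- and a flat cannot properly contain another flat of the same rank, as any element added to a flat strictly increases the rank --- we conclude $F(\calT) = F$. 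The ``in particular'' clause is immediate from the remark earlier in Section~\ref{sec:matroid} that every flacet containing more than one element is a cyclic flat, so the first part of the lemma applies to it directly.
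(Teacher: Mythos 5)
The paper does not prove this lemma: it is cited directly from Bonin's lecture notes, so there is no internal proof to compare against, and the only question is whether your blind argument is correct. Your choice of $\calT = \{S \in \calS : S \cap F \neq \emptyset\}$ is the right one, the two easy inequalities are fine, the K\"onig-type rank formula is correct, and the closing paragraph (two nested flats of the same rank coincide; flacets with more than one element are cyclic flats) is correct. The gap is precisely the step you flag, and the sketch you give for closing it does not work. Two concrete obstructions: (a) nothing prevents $G = F$ (for example when $\calT' = \calT$), in which case $F \setminus G = \emptyset$ and the circuit $C$ you need does not exist; (b) even when $G \subsetneq F$, the deficiency $|G| < |\calT'|$ does not constrain matchings from a circuit $C \subseteq F$ into $\calT$. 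The elements of $C \cap G$ are free to be matched into $\calT'$ (the deficiency is a statement about all of $G$, not about the possibly much smaller set $C \cap G$), and $C \setminus G$ is matched into $\calT \setminus \calT'$; you have not identified any obstruction to a size-$(|C|-1)$ matching, so the desired contradiction $\mathrm{rk}(C) < |C|-1$ does not follow.

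The right way to close the gap is to run your rank formula argument so that it produces a \emph{coloop} of $\calB(\calS)|_F$, which is the honest negation of cyclicity, rather than an underfull circuit. Among all $\calT_0 \subseteq \calS$ attaining $\mathrm{rk}(F) = |\calT_0| + |F \setminus F(\calT_0)|$, pick one $\calT^*$ of smallest cardinality. Any $S \in \calT^*$ with $S \cap F = \emptyset$ could be removed from $\calT^*$ without changing $F \cap F(\calT_0)$, contradicting minimality, so $\calT^* \subseteq \calT$. Since every element of $F \cap F(\calT^*)$ lies only in sets of $\calT^*$, we have $\mathrm{rk}(F \cap F(\calT^*)) \leq |\calT^*|$; combined with $\mathrm{rk}(F) \leq \mathrm{rk}(F \cap F(\calT^*)) + |F \setminus F(\calT^*)|$ and the defining equality, all of these are equalities. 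Hence for each $x \in F \setminus F(\calT^*)$, $\mathrm{rk}(F \setminus x) \leq \mathrm{rk}(F \cap F(\calT^*)) + |F \setminus F(\calT^*)| - 1 = \mathrm{rk}(F) - 1$, so $x$ is a coloop of $\calB(\calS)|_F$ and lies in no circuit contained in $F$. Since $F$ is cyclic this forces $F \setminus F(\calT^*) = \emptyset$, i.e.\ $F \subseteq F(\calT^*)$. But $F \subseteq F(\calT^*)$ says every set meeting $F$ lies in $\calT^*$, so $\calT \subseteq \calT^*$; with $\calT^* \subseteq \calT$ this gives $\calT^* = \calT$ and $\mathrm{rk}(F) = |\calT^*| = |\calT|$. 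Your remaining steps then finish the proof unchanged.
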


Given $I \subseteq [n]$, $\calB(\calS)|_I$ is a transversal matroid presented by $\calS|_I = \{S \cap I : S \in \calS\}$. Even if $\calS$ is a minimal presentation of $\calB(\calS)$, $\calS|_I$ is not necessarily a minimal presentation of $\calB(\calS)|_I$. However, if $I$ is a cyclic flat of $\calB(\calS)$, then $\calS|_I$ will be a minimal presentation of $\calB(\calS)|_I$, after possibly removing copies of the empty set from $\calS|_I$.

Transversal matroids are not closed under contractions. However, for any flat $F(\calT)$ with $\mathrm{rk}(F(\calT)) = |\calT|$, $\calB(\calS)/F(\calT)$ is a transversal matroid on the ground set $[n] \setminus F(\calT)$ with presentation $\calS \setminus \calT$. If $\calS$ is a minimal presentation of $\calB(\calS)$, then $\calS \setminus \calT$ is a minimal presentation of $\calB(\calS)/F(\calT)$. In particular, this fact holds if $F(\calT)$ is a cyclic flat.

\section{Basis shape loci} \label{sec:shape_loci}

Let $\calS$ be a set system and $L(\calS)$ be its basis shape locus.

\begin{proposition} \label{prop:generic_point_represents_matroid}
A generic point in $L(\calS)$ represents the matroid $\calB(\calS)$.
\end{proposition}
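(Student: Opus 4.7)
The plan is to analyze the Plücker coordinates of the row span of $M_\calS(\mathbf{x})$ directly as polynomials in the variables $x_{ij}$ (for $j \in S_i$), and compare their vanishing behavior with the matching structure of $\Gamma_\calS$. Since the matroid represented by a point $V \in \Gr(k,n)$ is determined by which $\Delta_I(V)$ are nonzero, it will suffice to show that, as a polynomial in the $x_{ij}$, the coordinate $\Delta_I$ of $M_\calS(\mathbf{x})$ is identically zero precisely when $I \notin \calB(\calS)$.

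First, I would fix $I \in \binom{[n]}{k}$ and write
\begin{displaymath}
\Delta_I(M_\calS(\mathbf{x})) \;=\; \sum_{\sigma \colon [k] \to I} \mathrm{sgn}(\sigma)\, \prod_{i=1}^{k} (M_\calS(\mathbf{x}))_{i,\sigma(i)},
\end{displaymath}
using the Leibniz expansion, where the sum is over bijections $\sigma$. A term $\sigma$ contributes a nonzero monomial exactly when $\sigma(i) \in S_i$ for every $i$, i.e., exactly when $\sigma$ encodes a perfect matching in $\Gamma_\calS$ between $\{S_1,\ldots,S_k\}$ and $I$. In that case the contribution is $\pm \prod_i x_{i,\sigma(i)}$.

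The key observation is that distinct such matchings produce distinct monomials, because each matching is recoverable from its monomial as the set of indices $(i,\sigma(i))$ appearing. Since the $x_{ij}$ for $j \in S_i$ are algebraically independent, these distinct monomials cannot cancel, so $\Delta_I(M_\calS(\mathbf{x}))$ is nonzero as a polynomial if and only if at least one matching of $\{S_1,\dots,S_k\}$ onto $I$ exists in $\Gamma_\calS$, which by definition is equivalent to $I \in \calB(\calS)$.

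To finish, I would note that the parameter space for points of $L(\calS)$ is the open subset of the affine space with coordinates $\{x_{ij} : j \in S_i\}$ on which all the variables are nonzero and the matrix has rank $k$ (this open subset is nonempty provided $\calB(\calS)$ is; otherwise $L(\calS)$ is empty and there is nothing to prove). Since $\binom{[n]}{k}$ is finite, the intersection of the nonvanishing loci of $\{\Delta_I : I \in \calB(\calS)\}$ and the vanishing loci of $\{\Delta_I : I \notin \calB(\calS)\}$ is a Zariski-open dense subset of this parameter space, and every point in it represents exactly the matroid $\calB(\calS)$. I do not foresee any real obstacle; the only thing to verify carefully is the no-cancellation claim, which rests on the algebraic independence of the $x_{ij}$ and the bijection between nonzero Leibniz terms and matchings in $\Gamma_\calS$.
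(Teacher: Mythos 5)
Your proof is correct and follows essentially the same approach as the paper: expand the Plücker coordinate $\Delta_I$ of $M_\calS(\mathbf{x})$ via Leibniz, identify the nonzero terms with matchings from $\calS$ to $I$ in $\Gamma_\calS$, and conclude that $\Delta_I$ vanishes identically iff $I \notin \calB(\calS)$. You make the no-cancellation step (distinct matchings yield distinct monomials in the algebraically independent $x_{ij}$) explicit where the paper leaves it as ``Generically, $\Delta_I$ is nonzero if and only if there is a matching,'' but the substance is the same.
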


\begin{proof}
Collections of vectors $\mathbf{v}_1, \mathbf{v}_2, \dots, \mathbf{v}_k \in \rr^{[n]}$ with $\mathrm{supp}(\mathbf{v}_i) = S_i$ are in bijection with edge weightings of $\Gamma_{\calS}$, with the edge between $S_i$ and $j$ weighted by the $j^{th}$ entry of $\mathbf{v}_i$. The $I^{th}$ Pl\"ucker coordinate of a point in $L(\calS)$ may be computed from $\Gamma_{\calS}$ by
\begin{displaymath}
\Delta_I = \sum_{M : \calS \ \textbf{--} \ I} \mathrm{sgn}(M)\prod_{e \in M} \mathrm{wt}(e),
\end{displaymath}
\noindent
where the sum is across matchings from $\calS$ to $I$ in $\Gamma_{\calS}$ and $\mathrm{sgn}(M)$ is the sign of $M$ viewed as a permutation. Generically, $\Delta_I$ is nonzero if and only if there is a matching in $\Gamma_{\calS}$ saturating $I$.
\end{proof}

\begin{theorem} \label{thm:dimension_and_minimal_presentations}
The following are equivalent:
\begin{itemize}
\item[(i)] $\dim(L(\calS)) = \mathrm{nmd}(\calS)$.
\item[(ii)] $\calS$ is a minimum presentation of $\calB(\calS)$.
\item[(iii)] For all $\calT \subseteq \calS$,
\begin{equation} \label{eqn:set_size_condition}
\left| \bigcup_{T \in \calT} T \right| \geq \max_{T \in \calT}\left( \left| T \right| \right) + \left| \calT \right| - 1.
\end{equation}
\end{itemize}
\end{theorem}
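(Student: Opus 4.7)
The plan is to prove (i) $\Leftrightarrow$ (iii) via a dimension count for $L(\calS)$, and (ii) $\Leftrightarrow$ (iii) via Theorem \ref{thm:minimal_presentation}.

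For (i) $\Leftrightarrow$ (iii), I would view $L(\calS)$ as the image of the dominant map $\mathbf{x} \mapsto \mathrm{rowspan}(M_\calS(\mathbf{x}))$ from the irreducible source $(\rr^{\ast})^{\sum_i|S_i|}$. A generic fiber is in bijection with the set of $g \in GL_k$ such that $g M_\calS(\mathbf{x})$ still has the $\calS$-pattern, and since the $i$-th row of $g$ is constrained independently, this fiber has dimension $\sum_i(k - m_i)$. Here $m_i$ is the generic rank of the linear system in $(g_{i1},\dots,g_{ik})$ forcing its $\ell$-th output entry to vanish for each $\ell \notin S_i$: only the $g_{ij}$ with $S_j \not\subseteq S_i$ contribute, and the resulting matrix has entries which are either zero or independent indeterminates. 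By the classical identification of generic rank with maximum matching, $m_i$ equals the maximum matching size in the bipartite graph between $J_i = \{j \neq i : S_j \not\subseteq S_i\}$ and $K_i = \bigcup_{j \in J_i}(S_j \setminus S_i)$ whose edges are $\{(j,\ell) : \ell \in S_j\}$. Since $m_i \leq |J_i| \leq k-1$, this gives $\dim L(\calS) \leq \mathrm{nmd}(\calS)$, with equality exactly when $m_i = k-1$ for every $i$.

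By Hall's marriage theorem, $m_i = k-1$ for every $i$ is equivalent to $|\bigcup_{j \in J}(S_j \setminus S_i)| \geq |J|$ for every $i$ and every $J \subseteq [k] \setminus \{i\}$. Setting $\calT = \{S_j : j \in J\} \cup \{S_i\}$ and using $|\bigcup_{j \in J}(S_j \setminus S_i)| = |\bigcup \calT| - |S_i|$ rewrites this as $|\bigcup \calT| \geq |T_0| + |\calT| - 1$ for every $\calT \subseteq \calS$ and every $T_0 \in \calT$, i.e., exactly condition (iii) after taking $T_0$ of maximum cardinality.

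For (iii) $\Leftrightarrow$ (ii), observe first that $m_i$ coincides with $\mathrm{rk}_{\calB(\calS)}([n] \setminus S_i)$, which is automatically at most $k-1$ since $S_i$ has no neighbors in $[n] \setminus S_i$ inside $\Gamma_\calS$; thus $m_i = k-1$ amounts to $[n] \setminus S_i$ having rank exactly $k-1$. Assuming (iii), for each $e \in S_i$ one may extend the size-$(k-1)$ matching of $\calS \setminus \{S_i\}$ into $[n] \setminus S_i$ by the edge $(S_i, e)$ to a perfect matching of $\calS$ into $[n] \setminus S_i \cup \{e\}$, showing $\mathrm{rk}([n] \setminus S_i \cup \{e\}) = k$; hence $[n] \setminus S_i$ is a hyperplane and $S_i$ is a cocircuit. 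The $S_i$ are distinct because (iii) applied to two copies of any repeated set would force $|S| \geq |S| + 1$. Theorem \ref{thm:minimal_presentation} now yields (ii). Conversely, if $\calS$ is minimal, each $[n] \setminus S_i$ is a rank-$(k-1)$ hyperplane, so $m_i = k-1$ and (iii) holds.

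The main obstacle is the fiber analysis — specifically, verifying that the constraints on distinct rows of $g$ are truly independent, and that the generic rank of each resulting zero-pattern matrix coincides with the maximum matching $m_i$. Once this is established, Hall's theorem cleanly converts the borderline equality $\sum_i m_i = k(k-1)$ into condition (iii), and the matroidal identifications follow via the cocircuit/hyperplane dictionary.
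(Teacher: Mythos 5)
Your proof is correct but proceeds differently from the paper's. The paper proves (i)$\Rightarrow$(iii) by a variable-elimination argument: given a violated inequality, one changes coordinates to shrink a support set, lowering $\mathrm{nmd}$ without changing $L(\calS)$. It proves (ii)$\Rightarrow$(i) by inducting on $\dim L(\calS)$ using the technical Lemma~\ref{lem:delete_element_preserve_minimal_presentation}, which guarantees a single element may be deleted from some maximal-size set while preserving minimality, and it cites Bondy for (ii)$\Leftrightarrow$(iii). You instead compute $\dim L(\calS)$ directly as $\sum_i|S_i|-\sum_i(k-m_i)$, where $m_i$ is the generic rank of the row-$i$ constraint system on the change-of-basis matrix $g$; identifying $m_i$ with $\mathrm{rk}_{\calB(\calS)}([n]\setminus S_i)$, equality with $\mathrm{nmd}$ holds iff $m_i = k-1$ for all $i$, which Hall's theorem converts into (iii) and the hyperplane/cocircuit dictionary plus Theorem~\ref{thm:minimal_presentation} converts into (ii). This route is more uniform, gives a closed formula for $\dim L(\calS)$ valid for any presentation (not just minimal ones), and re-derives the Bondy equivalence rather than citing it; what it gives up is minor, namely that the paper reuses its elimination argument in the proof of Theorem~\ref{thm:locus_is_matroidal}. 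The point you flag as a potential obstacle is in fact fine as stated: the vanishing constraints on row $i$ of $g M_{\calS}(\mathbf{x})$ involve only $(g_{i1},\dots,g_{ik})$, so the row systems decouple, and the constraint matrix for row $i$ has entries that are zero or distinct indeterminates $x_{j\ell}$, so its generic rank equals the maximum matching (a nonzero $r\times r$ minor exists iff some monomial of the Leibniz expansion survives, i.e.\ iff there is a size-$r$ matching, and with distinct indeterminates no cancellation occurs).
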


\begin{remark} \label{rem:ii_and_iii_equivalent}
The equivalence of (ii) and (iii) in Theorem \ref{thm:dimension_and_minimal_presentations} are proved in Theorem 3 in \cite{bondy:transversal_matroids}. There, condition (iii) is phrased as the equivalent condition that there is a matching of size $k - 1$ from $\calS \setminus T$ to $[n]\setminus T$ in $\Gamma_{\calS}$ for all $T \in \calS$. This equivalence is proved by noting that for any $\calT \subseteq \calS \setminus T$,
\begin{displaymath}
\begin{split}
\left| \bigcup_{S \in \calT} S \setminus T \right| &
= \left| \bigcup_{S \in \calT \cup T} S \setminus T \right| \\
& \geq  \max_{S \in \calT}\left( \left| S \right| \right) - |T|  + \left| \calT \cup T \right| - 1 \\
& \geq |\calT|.
\end{split}
\end{displaymath}
\noindent
These inequalities are exactly the inequalities from Hall's Matching Theorem.
\end{remark}

Taking the equivalence of (ii) and (iii) above, we establish a technical lemma which will be used to show the equivalence of (i) with the other two conditions.

\begin{lemma} \label{lem:delete_element_preserve_minimal_presentation}
Let $\calS = \{S_1, \dots, S_k\}$ be a minimal presentation of $\calB(\calS)$. If $|S| >1$ for some $S \in \calS$, there is some $S_i \in \calS$ of maximal size and some $s \in S_i$ such that $\calS' = \{S_1, \dots, S_i \setminus s, \dots, S_k\}$ is a minimal presentation of $\calB(\calS')$.
\end{lemma}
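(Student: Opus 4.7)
My plan is to use the characterization of minimal presentations in Theorem \ref{thm:dimension_and_minimal_presentations} via condition (iii). Fix a maximum-size $S_i \in \calS$ and write $\calS'_s = (\calS \setminus \{S_i\}) \cup \{S_i \setminus s\}$; the task is to choose $S_i$ and $s \in S_i$ so that condition (iii) holds for $\calS'_s$. Given any putative violator $\calT' \subseteq \calS'_s$, I would lift it to $\calT \subseteq \calS$ by replacing $S_i \setminus s$ (if present) with $S_i$. A short case analysis on whether $S_i \setminus s \in \calT'$ and whether $s \in T$ for some $T \in \calT \setminus \{S_i\}$ reduces the problem to a single obstruction: condition (iii) can fail for $\calT'$ only when $\calT$ is \emph{tight} in $\calS$, meaning $|\bigcup_{T \in \calT} T| = |S_i| + |\calT| - 1$; $\calT$ contains some maximum-size set other than $S_i$; and $s$ is \emph{private} to $S_i$ in $\calT$, i.e., $s \notin T$ for every $T \in \calT \setminus \{S_i\}$.

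The key structural observation comes from applying condition (iii) to $\calT \setminus \{S_i\}$, which yields $|\bigcup_{T \in \calT \setminus \{S_i\}} T| \geq |S_i| + |\calT| - 2$. Combined with tightness, this forces $|S_i \setminus \bigcup_{T \in \calT \setminus \{S_i\}} T| \leq 1$, so that at most one element of $S_i$ can be private in any such $\calT$. Consequently, if some maximum-size $S_i$ contains an element $s^* \in S_i$ lying in no other $S_j \in \calS$, then $s^*$ is automatically the unique element of $S_i$ private in every tight $\calT \ni S_i$ with another maximum-size set, so any $s \in S_i \setminus \{s^*\}$---which exists since $|S_i| \geq |S| > 1$---yields a good pair.

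The main obstacle is the case in which no maximum-size set of $\calS$ has such a globally private element. For this, I would exploit the following submodular-type fact, provable by a short manipulation of condition (iii): if $\calT_1, \calT_2$ are both tight in $\calS$ and contain $S_i$, then $\calT_1 \cup \calT_2$ and $\calT_1 \cap \calT_2$ are tight as well. The tight subsets containing a given $S_i$ thus form a sublattice of $2^\calS$. Using this lattice structure together with the at-most-one-private-element bound, I would argue by choosing the maximum-size $S_i$ that minimizes the size of the smallest tight subset of $\calS$ containing both $S_i$ and another maximum-size set, then using the hypothesis that each element of $S_i$ appears in at least one other set of $\calS$ to locate an element of $S_i$ that is shared in every such tight. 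This final selection step, navigating which maximum-size set to pick when several tight subsets in $\calS$ overlap nontrivially with $S_i$, is the most delicate part of the argument.
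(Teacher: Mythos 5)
Your obstruction analysis is correct and matches the paper's: condition (iii) can fail for $\calS' = (\calS \setminus \{S_i\}) \cup \{S_i \setminus s\}$ only via a subsystem of the form $\calT' = \calR \cup \{S_i \setminus s\}$ with $\calR \subseteq \calS \setminus \{S_i\}$, and a short case analysis shows this forces $\calR$ to contain a set of size $|S_i|$, to have $\left| \bigcup_{T \in \calR} T \right| = |\calR| + |S_i| - 1$, and to have $S_i \setminus s \subset \bigcup_{T \in \calR} T$, so that $s$ is the unique ``private'' element. Your at-most-one-private-element bound and your Case~1 (some maximal-size $S_i$ has a globally private element $s^*$, so any $s \in S_i \setminus \{s^*\}$ works) are both sound. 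The lattice observation (unions and intersections of tight subsystems containing $S_i$ are tight) is also essentially correct; the union half is the paper's Lemma~\ref{lem:unions_of_exact_subsystems}, and the intersection half follows by inclusion-exclusion once the union is known tight.

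The genuine gap is in Case~2, and it is more than ``delicate.'' Two concrete problems: first, the heuristic of choosing the $S_i$ minimizing the size of the smallest tight subsystem does not distinguish good from bad choices. In the minimal presentation $\calS = \{12, 23, 34\}$, every pair $\{S_i, S_{i+1}\}$ is tight of size $2$, so the heuristic treats $12$, $23$, $34$ identically; yet $12$ and $34$ each admit a removable element while $23$ does not (its two tights $\{12,23\}$ and $\{23,34\}$ have private elements $3$ and $2$, exhausting $23$). Second, the final step --- ``locate an element of $S_i$ that is shared in every such tight'' --- is precisely the statement that the private elements over all tights containing $S_i$ do not exhaust $S_i$, which is what needs proving; the lattice structure by itself does not deliver it, because the collection of tights containing $S_i$ \emph{and} another maximal-size set is not closed under intersection (the intersection may drop all the other maximal-size sets). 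So you have correctly reduced the problem, but not solved it.

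The paper avoids these issues by a different global structure. Rather than trying to construct a good pair $(S_i, s)$ directly, it assumes toward contradiction that \emph{every} pair $(S, s)$ with $S$ of maximal size and $s \in S$ has an obstruction $\calT_{S,s} \subseteq \calS \setminus \{S\}$. It then chooses $\calT_{S,s}$ maximal with the obstruction property, and among all these chooses the pair $(S,s)$ for which $\calT_{S,s}$ is \emph{minimal} by inclusion in the resulting poset. This double extremality, together with the observation that $\calT_{S,s}$ and $\calT_{S,t}$ cannot share a maximal-size set for $s \neq t$, lets the paper pick a maximal-size $S' \in \calT_{S,s}$, invoke $\calT_{S',s'} \not\subset \calT_{S,s}$, and show $\calT_{S,s} \cup \calT_{S',s'} \cup \{S\}$ violates condition (iii) --- a contradiction. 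If you want to salvage your direct approach, you would need something playing the role of this minimality argument to pin down which $S_i$ to choose; the lattice property and the ``minimize smallest tight'' heuristic are not enough.
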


\begin{proof}
If there is a unique $S$ of maximal size in $\calS$, removing any element of $S$ preserves the inequalities (\ref{eqn:set_size_condition}). Otherwise, suppose toward contradiction that for every $S \in \calS$ of maximal size and every $s \in S$ that there is some $\calT_{S,s} \subseteq \calS \setminus S$ such that
\begin{displaymath}
\left| \bigcup_{T \in \calT_{S,s}} T \cup (S \setminus s) \right| < |\calT_{S,s}| + |S|.
\end{displaymath}

For each pair $S,s$, choose $\calT_{S,s}$ to be a maximal subset of $\calS$ with this property. The subsystems $\calT_{S,s}$ are partially ordered by inclusion. Choose $S, s$ such that $\calT_{S,s}$ is minimal in this poset. That is, $\calT_{S,s} \not\supset \calT_{S',s'}$ for any other $S' \in \calS$ of maximal size and any $s' \in S'$. Necessarily, $\calT_{S,s}$ contains some set of size $|S|$.

Since $\calT_{S,s} \cup S$ satisfies (\ref{eqn:set_size_condition}),
\begin{equation} \label{eqn:ts_is_exact}
\left| \bigcup_{T \in \calT_{S,s}} T \right| = |\calT_{S,s}| + |S| -1,
\end{equation}
\noindent
and
\begin{equation} \label{eqn:Sminuss_is_subset}
(S \setminus s) \subset \bigcup_{T \in \calT_{S,s}} T.
\end{equation}

The same holds for $\calT_{S,t}$ for any $t \neq s$. If there is some $S' \in \calT_{S,s} \cap \calT_{S,t}$ with $|S'| = |S|$,
\begin{displaymath}
\left| \bigcup_{T \in \calT_{S,s} \cup \calT_{S,t}} T \right| = | \calT_{S,s} \cup \calT_{S,t}| + |S'| - 1.
\end{displaymath}
\noindent
Since $S \subset \bigcup_{T \in \calT_{S,s} \cup \calT_{S,t}} T$, $\calT_{S,s} \cup \calT_{S,t} \cup S$ violates the inequality (\ref{eqn:set_size_condition}), contradicting the assumption that $\calS$ was a minimal presentation. Thus,
\begin{displaymath}
\left\{ S' \in \calT_{S,s} \cap\calT_{S,t} : |S'| = |S| \right\} = \emptyset.
\end{displaymath}

Let $S' \in \calT_{S,s}$ be of maximal size and $s' \in S'$. Since $\calT_{S',s'}$ and $\calT_{S',t'}$ likewise cannot maximal size share sets for $s' \neq t'$, we may assume that $S \notin \calT_{S',s'}$. Since $\calT_{S,s}$ was chosen to be minimal in the poset on set systems ordered by containment, $\calT_{S',s'} \not\subset \calT_{S,s}$.

Let $\calT = \calT_{S,s} \cup \calT_{S',s'}$. Applying (\ref{eqn:ts_is_exact}) and (\ref{eqn:Sminuss_is_subset}) to $\calT_{S',s'}$, $\left|\bigcup_{T \in \calT_{S',s'} \cup S'}\right| = |\calT_{S',s'}| + |S'|$. Since $S' \in (\calT_{S',s'} \cup S') \cap \calT_{S,s}$,
\begin{displaymath}
\left| \bigcup_{T \in \calT_{S,s} \cup \calT_{S',s'}} T  \right| = |\calT_{S,s} \cup \calT_{S',s'}| + |S'|-1.
\end{displaymath}

By the maximality of $\calT_{S,s}$, $s \in \bigcup_{T \in \calT_{S,s} \cup \calT_{S',s'}} T$. So,
\begin{displaymath}
\left| \bigcup_{T \in \calT_{S,s} \cup \calT_{S',s'}} T  \cup S \right| = |\calT_{S,s} \cup \calT_{S',s'}| + |S'|-1,
\end{displaymath}
violating the inequality (\ref{eqn:set_size_condition}).
\end{proof}

\begin{example}
This example illustrates that while one may always remove some element from some set in a minimal presentation of transversal matroid to achieve a minimal presentation of a different transversal matroid, not every maximal size set contains an element which may be removed to produce a minimal presentation. Consider the set system $\{12, 23, 34\}$, which is a minimal presentation of its transversal matroid. Removing $2$ from the first set produces the set system $\{1, 23, 34\}$, which satisfies the inequalities (\ref{eqn:set_size_condition}) and is thus a minimal presentation of its transversal matroid. Likewise, removing $3$ from the third set gives a set system satisfying (\ref{eqn:set_size_condition}). Any other choice of element to remove produces a set system which is not a minimal presentation.
\end{example}

\begin{proof}[Proof of Theorem \ref{thm:dimension_and_minimal_presentations}]
The equivalence of (ii) and (iii) are shown in Theorem 3 in \cite{bondy:transversal_matroids}. We begin by showing that point (i) implies point (iii).

Let $M_{\calS}(\mathbf{x})$ be the matrix whose entries algebraically independent invertible variables $x_{ij}$ or zeros. Let $\mathbf{v}_i$ be the $i^{th}$ row of $M_\calS(\mathbf{x})$. Suppose one of the inequalities (\ref{eqn:set_size_condition}) is violated. So, there is some set $T \subseteq \{1,\dots, k\}$ such that
\begin{displaymath}
\left| \bigcup_{j \in T} S_j \right| < \max_{j \in T}(|S_j|) + |T| - 1.
\end{displaymath}
\noindent
Take $T$ to be such that no proper subset of $T$ has this property.
Let $S_{i_1}$ be a maximal size set in $\calT = \{S_j : j\in T\}$ and let $a_1 \in S_{i_1} \cap \left(\cup_{j \in T \setminus i_1} S_j\right)$. The minimality of $T$ implies $\Gamma_{\calT}$ is connected, and thus such an $a_1$ exists. Let $i_2 \in T \setminus i_i$ be such that $a_1 \in S_{i_2}$. Then,
\begin{displaymath}
\mathrm{supp}\left( \mathbf{v}_{i_1} - \frac{x_{i_1 a_1}}{x_{i_2 a_1}} \mathbf{v}_{i_2}\right) = (S_{i_1} \cup S_{i_2}) \setminus a_1.
\end{displaymath}

Again from the minimality of $T$, these is some $a_2 \in \left(S_{i_1} \cup S_{i_2}\right) \cap (\cup_{j \in T \setminus i_1,i_2} S_j )$. Say, $i_3 \in T \setminus i_1, i_2$ such that $a_2 \in S_{i_3}$. Then, there is some $c, d \in \rr({\mathbf{x}})$ such that 
\begin{displaymath}
\mathrm{supp}\left( \mathbf{v}_{i_1} - \frac{x_{i_1 a_1}}{x_{i_2 a_1}} \mathbf{v}_{i_2} - c \left(\mathbf{v}_{i_3}-d\mathbf{v}_{i_2}\right) \right)
= (S_{i_1} \cup S_{i_2} \cup S_{i_3}) \setminus a_1, a_2.
\end{displaymath}

Continuing to eliminate variables in this way, we may replace $\mathbf{v}_{i_1}$ with some vector $\mathbf{v}'_{i_1}$ supported on $S'_1 = \bigcup_{j \in T} S_j \setminus \{a_1, \dots, a_{|T|-1}\}$ without altering $L(\calS)$. Then,
\begin{displaymath}
\left|S'_1\right| = \left| \bigcup_{j \in T} S_j \right| - |T| + 1 < \left|S_1 \right|.
\end{displaymath}
\noindent
Let $\calS' = \calS \setminus S_1 \cup S'_1$. Since $L(\calS) = L(\calS')$,
\begin{displaymath}
\dim(L(\calS)) = \dim(L(\calS')) \leq \mathrm{nmd}(\calS') < \mathrm{nmd}(\calS),
\end{displaymath}
\noindent
and thus point (i) implies point (iii).

We establish that point (ii) implies point (i) by inducting on $\dim(L(\calS))$. If $\dim(L(\calS)) = 0$, exactly one Pl\"ucker coordinate $\Delta_I$ is nonzero on $L(\calS)$. Say $I = \{i_1, \dots, i_k\}$. $\calS = \{ \{i_1\}, \dots, \{i_k\}\}$ is the unique minimal presentation of $\calB(\calS)$, and $\mathrm{nmd}(\calS) = 0$ as well.

Let $\calS$ be a minimal presentation of $\calB(\calS)$ and $\dim(L(\calS)) > 0$. Lemma \ref{lem:delete_element_preserve_minimal_presentation} guarantees some $S_i \in \calS$ and $s \in S_i$ so that $\calS' = \calS \setminus S_i \cup (S_i \setminus s)$ is a minimal presentation of $\calB(\calS')$.

We claim that $\dim(L(\calS)) > \dim(L(\calS'))$. Note that $\calB(\calS) \supset \calB(\calS')$. So, some Pl\"ucker coordinate $\Delta_I$ vanishes on $L(\calS')$, but is generically nonzero on $L(\calS)$. Pick a generic $V \in L(\calS')$ represented by $M_{\calS'}(\mathbf{y})$ for some $y_{ij} \in \rr^{\ast}$. Perturbing $\mathbf{y}$ gives a $\dim(L(\calS'))$-dimensional neighborhood around $V$. Both $V$ and this neighborhood are in $\overline{L(\calS)}$, since they are limits as the $i,s$ entry of $M_{\calS}(\mathbf{x})$ goes to zero. In $\overline{L(\calS)}$, we may perturb the $i,s$ entry of $M_{\calS}(\mathbf{y})$. Points obtained in this way are not in $V$'s neighborhood in $L(\calS')$, since $\Delta_I \neq 0$ at these points. So, $\dim(\overline{L(\calS)}) > \dim(\overline{L(\calS')})$.

Finally, inducting on $\dim(L(\calS))$,
\begin{displaymath}
\mathrm{nmd}(\calS) \geq \dim(L(\calS)) > \dim(L(\calS')) = \mathrm{nmd}(\calS') = \mathrm{nmd}(\calS) - 1.
\end{displaymath}
Thus, $\mathrm{nmd}(\calS) = \dim(L(\calS))$, completing the proof.
\end{proof}

The geometric content of this theorem is perhaps somewhat surprising. The locus $L(\calS)$ is the image of $\{M_\calS(\mathbf{y}) : y_{ij} \in \rr^{\ast}\}$ under quotient by left action of $\mathit{Gl}(k)$. Fixing one of the $y_{ij}$ to a generic parameter, one does not expect the dimension of this quotient to change. However, setting a $y_{ij}$ to zero, one generally expects the dimension of this quotient to drop by one. Contrary to this expectation, the equivalence of (i) and (iii) together with the elimination argument used to prove that (i) implies (iii) show that if the set of matrices $\{M_\calS(\mathbf{y}) : y_{ij} \in \rr^{\ast}, \mathrm{rk}(M_{\calS}(\mathbf{y})) = k\}$ and the quotient $L(\calS)$ differ by more than a torus worth of symmetry, we may always set one of the $y_{ij}$ to zero without altering the quotient $L(\calS)$. As a corollary, we see that if $\calS$ is not a minimal presentation of $\calB(\calS)$, there is some $S_i \in \calS$ and some $s \in S_i$ such that $\calB(\{S_1, \dots, S_i \setminus s, \dots, S_k\}) = \calB(\calS)$. This combinatorial fact has been noted several times in the literature, for instance as Theorem 3.7 in \cite{brualdi:transversal_matroids}, but does not a priori imply its geometric analog.

\begin{theorem} \label{thm:locus_is_matroidal}
Let $\calS$ and $\calS'$ be set systems. Then, $\calB(\calS) = \calB(\calS')$ if and only if $L(\calS) = L(\calS')$.
\end{theorem}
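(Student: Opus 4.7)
The direction $(\Leftarrow)$ is immediate from Proposition~\ref{prop:generic_point_represents_matroid}: a generic point of $L(\calS)$ represents the matroid $\calB(\calS)$, and similarly for $L(\calS')$, so if the two loci coincide then so do their generic matroids, forcing $\calB(\calS) = \calB(\calS')$. The content is in the direction $(\Rightarrow)$.

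Assume $\calB(\calS) = \calB(\calS') = \calB$ and let $\calM = \{M_1, \ldots, M_k\}$ denote the unique maximal presentation of $\calB$. Both $\calS$ and $\calS'$ satisfy $\calS, \calS' \preceq \calM$, so by symmetry it suffices to prove $L(\calS) = L(\calM)$ for an arbitrary presentation $\calS$ of $\calB$. I would proceed by induction on $\sum_i (|M_i| - |S_i|)$, reducing the inductive step to the case $\calS^+ = (\calS \setminus S_i) \cup \{S_i \cup \{s\}\}$, obtained from $\calS$ by adjoining a single $s \in M_i \setminus S_i$ in a way that still presents $\calB$. The existence of such an intermediate $\calS^+$ for every $\calS \prec \calM$ is built into the maximality of $\calM$ among presentations of $\calB$.

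The containment $L(\calS^+) \subseteq L(\calS)$ is carried out by the same elimination strategy that drives the proof of Theorem~\ref{thm:dimension_and_minimal_presentations}: given a matrix $M_{\calS^+}(\mathbf{y})$ realizing $V \in L(\calS^+)$, one zeros the $(i,s)$-entry by adding a suitable multiple of some row $l \neq i$ for which $s \in S_l$ (such a row must exist, else $s$ would be a loop of $\calB$, contradicting $s \in M_i$), and one cancels any nonzero entries introduced outside $S_i$ by iterating this elimination along a cascade of rows, in the spirit of the construction used in that proof. The reverse containment $L(\calS) \subseteq L(\calS^+)$ is the more delicate step: given $V \in L(\calS)$ realized by $M_\calS(\mathbf{y})$, I would seek a vector $\mathbf{v}'_i = \mathbf{v}_i + \sum_{l \neq i} c_l \mathbf{v}_l \in V$ whose support is exactly $S_i \cup \{s\}$. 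The vanishing requirements at the positions $p \notin S_i \cup \{s\}$ form a homogeneous linear system in the coefficients $(c_l)$, and the task is to produce a solution at which the coordinate of $\mathbf{v}'_i$ at $s$ is nonzero while the coordinates inside $S_i$ remain nonzero.

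The main technical obstacle is precisely this last step. The hypothesis $\calB(\calS) = \calB(\calS^+)$ says that adjoining the edge $(S_i, s)$ to $\Gamma_\calS$ creates no new saturating matching, and by a Hall-type argument this combinatorial statement translates into a rank condition on the submatrix of $M_\calS(\mathbf{y})$ indexed by the rows $l \neq i$ and the columns in $[n] \setminus (S_i \cup \{s\})$. That rank condition is exactly what is needed to ensure that the solution space of the vanishing constraints is not contained in the hyperplane on which the $s$-coordinate of $\mathbf{v}'_i$ vanishes, at which point the inductive step closes and we conclude $L(\calS) = L(\calM) = L(\calS')$.
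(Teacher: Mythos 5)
Your high-level plan --- reduce to the unique maximal presentation $\calM$ and induct on single-element additions $\calS \rightsquigarrow \calS^+$ --- matches the skeleton of the paper's own proof, which likewise invokes the elimination argument from Theorem~\ref{thm:dimension_and_minimal_presentations}. However, the inclusion you treat as the routine half, $L(\calS^+) \subseteq L(\calS)$, has a real gap, and it cannot be closed because the inclusion is in fact false. When you row-reduce a particular $V \in L(\calS^+)$ to kill the $(i,s)$-entry, the same reduction can accidentally kill further entries inside $S_i$, producing a vector of support strictly contained in $S_i$; worse, $V$ may contain no vector of support exactly $S_i$ at all. Concretely, take $\calS = \{\{2,3\},\{1,2\}\}$ and $\calS^+ = \{\{1,2,3\},\{1,2\}\}$ (both present the uniform matroid $U_{2,3}$), and let $V = \mathrm{span}\bigl((1,1,1),(1,1,0)\bigr) \in \Gr(2,3)$. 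Then $V \in L(\calS^+)$, but every vector of $V$ with vanishing first coordinate is a multiple of $(0,0,1)$, so $V$ admits no basis of shape $\calS$ and $L(\calS^+) \not\subseteq L(\calS)$. Note also that the direction you flag as the delicate one, $L(\calS) \subseteq L(\calS^+)$, is in this example the trivial perturbation direction; you have the roles of the two containments reversed.

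The same example shows that the theorem as literally stated --- equality of the loci $L(\calS) = L(\calS')$ --- is false, and the published proof glosses over the identical issue when it asserts that zeroing entries of $M_{\calS'}(\mathbf{x})$ ``does not alter the set $L(\calS')$.'' What the generic elimination argument actually establishes is equality on a dense open locus, i.e.\ $\overline{L(\calS)} = \overline{L(\calS')}$, and this weaker statement is all that is used downstream (Theorems~\ref{thm:expected_codimension_of_transversal_matroid} and~\ref{thm:loci_are_positroids} only need equality of dimensions and of closures). So rather than try to supply the Hall-type rank condition you outline for the hard direction, the productive fix --- for your proof and for the paper's --- is to restate both the target and your inductive step in terms of closures; the elimination carried out over the generic matrix $M_{\calS^+}(\mathbf{x})$ then goes through cleanly without tracking what happens at special points of $L(\calS^+)$.
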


\begin{proof}
If $L(\calS) = L(\calS')$, Proposition \ref{prop:generic_point_represents_matroid} implies that $\calB(\calS) = \calB(\calS')$. Conversely, let $\calS$ and $\calS'$ be set systems such that $\calB(\calS) = \calB(\calS')$. It suffices to show that $L(\calS) = L(\calS')$ in the case where $\calS'$ is the unique maximal presentation of $\calB(\calS)$ guaranteed by Theorem \ref{thm:min_presentations_have_same_sizes}. The variable elimination argument used to show show point (i) implies point (iii) in Theorem \ref{thm:dimension_and_minimal_presentations} shows that we can set variables in $M_{\calS'}(\mathbf{x})$ to zero without altering the set $L(\calS')$ until we arrive at the matrix $M_{\calS}(\mathbf{x})$. Hence, $L(\calS) = L(\calS')$.
\end{proof}

\section{Dimension computations} \label{sec:dimension_computations}

This section is dedicated to proving the following theorem, which will be used later to show that $\overline{L(\calS)} = \overline{V_{\calB(\calS)}}$ in the case where $\calB(\calS)$ is a positroid.

\begin{theorem} \label{thm:positroid_dimension}
If $\calS$ is a set system such that $\calB(\calS)$ is a positroid, then
\begin{displaymath}
\dim(L(\calS)) = \dim(V_{\calB(\calS)}).
\end{displaymath}
\end{theorem}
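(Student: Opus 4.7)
The plan is to sandwich $\dim(L(\calS))$ between $\dim(V_{\calB(\calS)})$ from both sides. By Proposition \ref{prop:generic_point_represents_matroid}, a generic point of $L(\calS)$ represents $\calB(\calS)$, so $\overline{L(\calS)} \subseteq \overline{V_{\calB(\calS)}}$, giving the upper bound $\dim(L(\calS)) \leq \dim(V_{\calB(\calS)})$ for any transversal matroid. The content of the theorem is the matching lower bound in the positroid case.

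By Theorem \ref{thm:locus_is_matroidal}, the locus $L(\calS)$ depends only on $\calB(\calS)$, so I may replace $\calS$ by any minimal presentation; then by Theorem \ref{thm:dimension_and_minimal_presentations}, $\dim(L(\calS)) = \sum_i |S_i| - k$, and by Theorem \ref{thm:minimal_presentation} the sets $S_i$ are distinct cocircuits of $\calB = \calB(\calS)$. It thus suffices to exhibit a subset of $L(\calS)$ of dimension $\dim(V_\calB)$ when $\calB$ is a positroid and $\calS = \{S_1, \ldots, S_k\}$ is a fixed minimal cocircuit presentation.

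The source of such a family is the Marsh-Reitsch parameterization. Theorem \ref{thm:facts_about_marsh_reitsh_cell}(iv) says that evaluating $\MR_\calB(\mathbf{x})$ at $\mathbf{x} \in \rr_{>0}^{\dim(V_\calB)}$ is a diffeomorphism onto $V_\calB \cap \Gr_{\geq 0}(k,n)$. For any $V$ in this positive piece, the standard correspondence between cocircuits of a linear matroid and minimal supports of nonzero vectors in the row span produces, for each $S_i \in \calS$, a vector $\mathbf{v}_i \in V$ with $\mathrm{supp}(\mathbf{v}_i) = S_i$, unique up to scaling. If these $k$ cocircuit vectors are linearly independent, the matrix with rows $\mathbf{v}_1, \ldots, \mathbf{v}_k$ exhibits $V$ as a point of $L(\calS)$; this yields $V_\calB \cap \Gr_{\geq 0}(k,n) \subseteq L(\calS)$ and hence $\dim(L(\calS)) \geq \dim(V_\calB)$.

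The main obstacle is proving this linear independence. A nontrivial dependence $\sum_{i \in J} a_i \mathbf{v}_i = 0$ with $a_i \neq 0$ for $i \in J$ forces each coordinate in $\bigcup_{i \in J} S_i$ to lie in at least two of the cocircuits $\{S_i\}_{i \in J}$, since a coordinate covered exactly once cannot be cancelled. I would try to derive a contradiction with the minimality inequality (iii) of Theorem \ref{thm:dimension_and_minimal_presentations} by combining this double-cover condition with the positroid structure: Theorem \ref{thm:positroids_noncrossing_partitions} reduces the problem to connected positroids, and then Theorem \ref{thm:positroid_flacets} forces the relevant cyclic flats of $\calB$ to be cyclic intervals, so the doubly covered union inherits a cyclic structure incompatible with minimality. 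A cleaner alternative, if this direct attack resists, is to induct on $k$ by peeling off a single cyclic interval flacet using Lemma \ref{lem:cyclic_flats_transversal_matroid} together with the behavior of minimal presentations under restriction and contraction outlined in Section \ref{sec:transversal_matroids}.
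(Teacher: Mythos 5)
Your route is genuinely different from the paper's, which reduces to Ford's theory of expected codimension: Theorem~\ref{thm:expected_codimension_of_transversal_matroid} shows $\mathrm{codim}(L(\calS)) = \mathrm{ec}(\calB(\calS))$ by a M\"obius-function computation over the flats $F(\calT)$, and then Theorem~\ref{thm:positroid_expected_codimension} (positroid varieties achieve their expected codimension) finishes. You instead try to show the stronger inclusion $V_{\calB} \cap \Gr_{\geq 0}(k,n) \subseteq L(\calS)$ by taking, for each $V$ in the positive part of the stratum, the cocircuit vectors of supports $S_1, \dots, S_k$.

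The key step --- linear independence of those cocircuit vectors --- is not only left as a sketch in your proposal, it is false in general, so no noncrossing/cyclic-flat argument can supply it. Take $\calS = \{\{1,2,3\},\{1,4,5\},\{2,4,6\},\{3,5,6\}\}$. Each set is a distinct $3$-element subset, hence a cocircuit of $U_{4,6}$, and the inequalities of Theorem~\ref{thm:dimension_and_minimal_presentations}(iii) hold (pairwise unions have size $5 \geq 4$, triple and quadruple unions have size $6$), so $\calS$ is a minimal presentation with $\calB(\calS) = U_{4,6}$, a positroid. Let $V \in \Gr_{>0}(4,6)$ be the row span of the $4\times 6$ Vandermonde matrix $(j^{i-1})_{1 \leq i \leq 4,\, 1 \leq j \leq 6}$. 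The cocircuit vector of $V$ with support $S_i$ is the evaluation at $1,\dots,6$ of the cubic $p_i$, where $p_1 = (x-4)(x-5)(x-6)$, $p_2 = (x-2)(x-3)(x-6)$, $p_3 = (x-1)(x-3)(x-5)$, $p_4 = (x-1)(x-2)(x-4)$. A direct check gives
\begin{displaymath}
-\tfrac{1}{6}\,p_1 + p_2 - \tfrac{4}{3}\,p_3 + \tfrac{1}{2}\,p_4 = 0,
\end{displaymath}
so the four cocircuit vectors are linearly dependent and $V \notin L(\calS)$. The double-cover condition you isolate (every element covered at least twice) does hold here and is fully compatible with minimality and with $\calB$ being a positroid, so the contradiction you hope to extract does not exist. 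What is true is only that $L(\calS)$ is \emph{dense} in $V_\calB$, which is precisely the content of the theorem being proved and cannot be assumed. You might hope to salvage the argument by choosing a noncrossing minimal presentation (the cocircuit vectors for $\{123,234,345,456\}$ are independent at this same $V$), but the existence of a noncrossing minimal presentation for an arbitrary transversal positroid is only Conjecture~\ref{conj:transversal_positroids}, and even granting it the independence would still need an argument. The paper's expected-codimension route sidesteps all of this.
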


Evidently, $\dim(L(\calS)) \leq \dim(V_{\calB(\calS)})$. In \cite{ford:expected_codimension}, Ford introduces a notion of expected codimension of a matroid variety, and proves that positroid varieties achieve their expected codimension. We prove that for a transversal matroid, Ford's expected codimension agrees with $\mathrm{codim}(L(\calS))$. For full details on the expected codimension, see \cite{ford:expected_codimension}.

\begin{definition} \label{def:expected_codimension}
Let $\calB$ be a rank $k$ matroid on the ground set $[n]$ and let $\calI$ be some collection of subsets of $[n]$. For $I \in \calI$, let
\begin{displaymath}
c(I) = |I| - \mathrm{rk}(I),
\end{displaymath}
\noindent
and
\begin{equation} \label{eqn:definition_of_b}
b_{\calI}(I) = \sum_{J \in \calI} (k - \mathrm{rk}(J)) \mu_{\calI}(I,J),
\end{equation}
\noindent
where $\mu_{\calI}$ is the M\"obius function on the poset obtained by ordering the elements of $\calI$ by containment. Then, the {\textit{expected codimension of $\calB$ with respect to}} $\calI$ is
\begin{displaymath}
\mathrm{ec}_{\calI}(\calB) = \sum_{I \in \calI} c(I) b_{\calI}(I).
\end{displaymath}
The {\textit{expected codimension of}} $\calB$ is
\begin{displaymath}
\mathrm{ec}(\calB) = \mathrm{ec}_{\calP([n])}(\calB),
\end{displaymath}
\noindent
where $\calP([n])$ is the power set of $[n]$.
\end{definition}

\begin{theorem}[Theorem 3.6 in \cite{ford:expected_codimension}] \label{thm:expected_comdimension_via_flacets}
Let $\calB$ be a connected matroid and suppose $\calI$ is a collection of subsets of $[n]$ containing every set $I$ such that $\calB|_{I}$ and $\calB/I$ are connected (i.e$.$ the flacets of $\calB$) and such that whenever $I \in \calI$, all $J$ such that $\calB|_{J}$ is a connected component of $\calB|_I$ are in $\calI$ as well. Then, $\mathrm{ec}_{\calI}(\calB) = \mathrm{ec}(\calB)$.
\end{theorem}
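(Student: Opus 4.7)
The plan is to prove Theorem \ref{thm:expected_comdimension_via_flacets} by showing that $\mathrm{ec}_\calI(\calB)$ has the same value for every family $\calI$ satisfying the stated hypothesis, with $\calP([n])$ (which obviously satisfies the hypothesis) recovering $\mathrm{ec}(\calB)$ on the nose. Inducting on $|\calP([n]) \setminus \calI|$, it suffices to show that if $\calI'$ is valid and $\calI = \calI' \setminus \{I_0\}$ is also valid, then $\mathrm{ec}_{\calI'}(\calB) = \mathrm{ec}_\calI(\calB)$.

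The first technical step is a modular vanishing lemma: if $I \subseteq [n]$ has $\calB/I$ disconnected, then $b_{\calP([n])}(I) = 0$. On the Boolean poset the M\"obius function is $(-1)^{|J \setminus I|}$, so $b_{\calP([n])}(I) = \sum_{K \subseteq [n] \setminus I} (k - \mathrm{rk}(I \cup K))(-1)^{|K|}$. Decomposing the ground set of $\calB/I$ into components $A \sqcup B$ with both parts nonempty, the rank identity $\mathrm{rk}(I \cup K) = \mathrm{rk}(I \cup (K \cap A)) + \mathrm{rk}(I \cup (K \cap B)) - \mathrm{rk}(I)$ makes $k - \mathrm{rk}(I \cup K)$ a sum of a function of $K \cap A$ and a function of $K \cap B$. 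The alternating sum then factors through $\bigl(\sum_{K_A \subseteq A}(-1)^{|K_A|}\bigr)\bigl(\sum_{K_B \subseteq B}(-1)^{|K_B|}\bigr) = 0$. Every non-flat $I$ has a loop in $\calB/I$ and is killed by this lemma, so only flats can contribute to $\mathrm{ec}(\calB)$.

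The remaining case, which I expect to be the main obstacle, is when $I_0$ is a flat with $\calB|_{I_0}$ disconnected as $\calB|_{I_0} = \calB|_{I_1} \oplus \calB|_{I_2}$, with both $I_1, I_2 \in \calI$ by the closure hypothesis. Here $c(I_0) = c(I_1) + c(I_2)$ and $\mathrm{rk}(I_0) = \mathrm{rk}(I_1) + \mathrm{rk}(I_2)$. Removing $I_0$ from $\calI'$ perturbs every $\mu_{\calI'}(I,J)$ with $I \subseteq I_0 \subseteq J$; using the standard update formula for M\"obius functions under single-element deletion from a poset, I would show that the disappearance of $c(I_0)\, b_{\calI'}(I_0)$ is exactly compensated by the induced adjustments to $b_{\calI}(J)$ for $J \in \calI$ strictly containing $I_0$, by pairing contributions from subsets of $I_1$ against those from subsets of $I_2$ via the direct-sum factorization of the rank function. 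The bookkeeping of these M\"obius updates is the crux of the argument; a cleaner alternative would be to find an intrinsic closed form for $\mathrm{ec}_\calI(\calB)$ that manifestly depends only on $\calB$ (for instance, via matroid polytope data or an Euler characteristic on the lattice of flacets), which would bypass M\"obius bookkeeping entirely. Iterating the single-deletion argument from $\calP([n])$ down to the minimal valid $\calI$ then completes the proof.
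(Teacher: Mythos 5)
This is a cited result (Theorem 3.6 in Ford's paper), and the present paper gives no proof of it, so there is no internal argument to compare against; your proposal must stand or fall on its own.

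Your modular vanishing lemma is correct, and the verification you give is sound: with $J = I \cup K$, $K \subseteq [n]\setminus I$, the identity $k - \mathrm{rk}(I \cup K) = \bigl(k - \mathrm{rk}(I) - \mathrm{rk}_{\calB/I}(K\cap A)\bigr) + \bigl(-\mathrm{rk}_{\calB/I}(K\cap B)\bigr)$ splits the summand as $f(K\cap A) + g(K\cap B)$, and the double alternating sum collapses to $f$-part $\cdot \sum_{K_B}(-1)^{|K_B|} + \sum_{K_A}(-1)^{|K_A|} \cdot g$-part $= 0$ when $A, B \neq \emptyset$. (Minor point: ``$\calB/I$ has a loop'' only forces disconnectedness when $|[n]\setminus I| \ge 2$; the cases $|I| \ge n-1$ need a separate but trivial check, since then $b_{\calP([n])}(I) = 0$ directly.)

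The genuine gap is exactly where you flag it. There are in fact two unresolved issues in your induction. First, you have not argued that one can always descend from $\calP([n])$ to an arbitrary valid $\calI$ by single deletions through valid intermediate collections; this is true (remove sets in decreasing order of size, and observe that no set still present can have a component of its restriction among the already-removed ones without contradicting validity of the target $\calI$), but you should say so. Second and more seriously, your vanishing lemma is only proved for $b_{\calP([n])}$, whereas the step you need is about $b_{\calI'}$ for each intermediate $\calI'$. A useful identity that clarifies what is required: writing $\hat{c}_{\calI'}(I_0) = \sum_{J \le I_0,\, J \in \calI'} c(J)\,\mu_{\calI'}(J, I_0)$, one computes
\begin{displaymath}
\mathrm{ec}_{\calI'}(\calB) - \mathrm{ec}_{\calI}(\calB) = b_{\calI'}(I_0)\cdot \hat{c}_{\calI'}(I_0).
\end{displaymath}
So a removal is harmless iff one of these two factors vanishes. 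Your lemma gives $b_{\calP([n])}(I_0) = 0$ when $\calB/I_0$ is disconnected, and a dual lemma (by the same direct-sum-of-the-rank-function factoring, applied to $c$ on the down-set) gives $\hat{c}_{\calP([n])}(I_0) = 0$ when $\calB|_{I_0}$ is disconnected. But you still need to propagate these to $b_{\calI'}$, $\hat{c}_{\calI'}$ for the intermediate families. Removing a set with $b_{\calI'} = 0$ leaves $b$ unchanged on $\calI$ but in general changes $\hat{c}$ on $\calI$, and vice versa, so the two types of removals interact; you need either an ordering argument or a direct proof of the vanishing lemmas at the level of general valid $\calI'$, which must use the closure hypothesis on components of restrictions. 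As you say, this bookkeeping is the crux of Ford's argument, and as written your proposal does not carry it out.
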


\begin{lemma}[Proposition 3.7 in \cite{ford:expected_codimension}] \label{lem:expected_codimension_direct_sum}
Let $\calB_1, \calB_2$ be matroids. Then,
\begin{displaymath}
\mathrm{ec}(\calB_1 \oplus \calB_2) = \mathrm{ec}(\calB_1) + \mathrm{ec}(\calB_2).
\end{displaymath}
\end{lemma}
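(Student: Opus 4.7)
The plan is to exploit the product structure of the power set of a disjoint union. Let $E_1, E_2$ be the ground sets of $\calB_1, \calB_2$, with $|E_j| = n_j$ and $\mathrm{rk}(\calB_j) = k_j$. Each $I \subseteq E_1 \sqcup E_2$ splits uniquely as $I = I_1 \sqcup I_2$ with $I_j \subseteq E_j$, identifying the power set $\calP(E_1 \sqcup E_2)$ with the product poset $\calP(E_1) \times \calP(E_2)$ under inclusion. Under this identification, three compatibility facts are immediate: the Möbius function factors as $\mu(I_1 \sqcup I_2, J_1 \sqcup J_2) = \mu_1(I_1, J_1)\, \mu_2(I_2, J_2)$; the matroid rank splits as $\mathrm{rk}(I_1 \sqcup I_2) = \mathrm{rk}_1(I_1) + \mathrm{rk}_2(I_2)$ with total rank $k = k_1 + k_2$; and consequently $c(I) = c_1(I_1) + c_2(I_2)$ together with $k - \mathrm{rk}(J) = (k_1 - \mathrm{rk}_1(J_1)) + (k_2 - \mathrm{rk}_2(J_2))$.

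Substituting these into the definition of $b$ and distributing the split corank, each resulting piece factors as a Möbius sum in one component times a complete Möbius sum $\sum_{J_j \supseteq I_j} \mu_j(I_j, J_j)$ in the other. The latter equals $\delta_{I_j, E_j}$ by the defining property of $\mu_j$ on the interval $[I_j, E_j]$, which yields the clean decomposition
\[
b(I_1 \sqcup I_2) \;=\; b_1(I_1)\, \delta_{I_2, E_2} \;+\; \delta_{I_1, E_1}\, b_2(I_2).
\]

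Plugging this expression and $c = c_1 + c_2$ into $\mathrm{ec}(\calB_1 \oplus \calB_2) = \sum_I c(I)\, b(I)$ produces a fourfold expansion. Two of the four terms collapse directly via the Kronecker deltas to $\mathrm{ec}(\calB_1)$ and $\mathrm{ec}(\calB_2)$. The remaining cross terms have the shape $c_i(E_i) \sum_{I_j} b_j(I_j)$; the inner sum can be evaluated by swapping the order of summation and applying the dual Möbius identity $\sum_{I \subseteq J} \mu(I, J) = \delta_{J, \emptyset}$. I expect the main obstacle to be showing that these cross contributions cancel (or, equivalently, identifying the combinatorial vanishing that forces the formula to hold on the nose), as this is where the substance of Proposition 3.7 lives; the rest of the argument is formal bookkeeping on the product poset.
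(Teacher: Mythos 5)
Your formal setup is fine: the identification $\calP(E_1\sqcup E_2)\cong\calP(E_1)\times\calP(E_2)$, the factorization of the M\"obius function, rank additivity, and the resulting decomposition $b(I_1\sqcup I_2)=b_1(I_1)\,\delta_{I_2,E_2}+\delta_{I_1,E_1}\,b_2(I_2)$ are all correct for the paper's Definition \ref{def:expected_codimension}. The gap is exactly where you suspected: the cross terms do not cancel. Your own identity $\sum_{I\subseteq J}\mu(I,J)=\delta_{J,\emptyset}$, applied after swapping the order of summation, gives $\sum_{I_2\subseteq E_2} b_2(I_2)=\sum_{J_2}(k_2-\mathrm{rk}_2(J_2))\,\delta_{J_2,\emptyset}=k_2-\mathrm{rk}_2(\emptyset)=k_2$, not $0$ (equivalently, this is M\"obius inversion $k_2-\mathrm{rk}_2(I)=\sum_{J\supseteq I}b_2(J)$ evaluated at $I=\emptyset$). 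So the two cross terms contribute $c_1(E_1)k_2+c_2(E_2)k_1=(n_1-k_1)k_2+(n_2-k_2)k_1$, and your computation, carried through honestly, proves
\begin{displaymath}
\mathrm{ec}(\calB_1\oplus\calB_2)=\mathrm{ec}(\calB_1)+\mathrm{ec}(\calB_2)+k_2(n_1-k_1)+k_1(n_2-k_2),
\end{displaymath}
where each $\mathrm{ec}$ is computed with that matroid's own rank in the corank factor, as in Definition \ref{def:expected_codimension}. With that normalization the stated additivity is in fact false: for $\calB_1=\calB_2=U_{1,2}$ one gets $\mathrm{ec}(\calB_1)=\mathrm{ec}(\calB_2)=0$ but $\mathrm{ec}(\calB_1\oplus\calB_2)=2$ (which is the correct codimension of that positroid variety in $\Gr(2,4)$), so no proof of the literal statement could succeed.

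Two remarks to orient you. First, the paper gives no proof here; it cites Ford's Proposition 3.7, and the additivity there is for expected codimensions all computed with a common ambient rank. If you run your argument with the same $k$ in every corank factor, then $k-\mathrm{rk}(J)=(k-\mathrm{rk}_1(J_1))+(k-\mathrm{rk}_2(J_2))-k$, the decomposition of $b$ acquires an extra term $-k\,\delta_{I_1,E_1}\delta_{I_2,E_2}$, and its contribution $-k\bigl(c_1(E_1)+c_2(E_2)\bigr)$ cancels the cross terms exactly; so your strategy does close up in that setting. Second, for the way the lemma is used in Theorem \ref{thm:expected_codimension_of_transversal_matroid}, the corrected formula above is all that is needed: $\mathrm{codim}(L(\calS))$ in $\Gr(k,n)$ picks up precisely the same cross term $k_1(n_2-k_2)+k_2(n_1-k_1)$ under direct sums, so ``$\mathrm{ec}$ and $\mathrm{codim}$ behave identically under direct sums'' is the honest statement, and your computation is actually a proof of it once you stop expecting the cross terms to vanish.
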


\begin{theorem}[Theorem 4.7 in \cite{ford:expected_codimension}] \label{thm:positroid_expected_codimension}
If $\calB$ is a positroid, $\mathrm{ec}(\calB) = \mathrm{codim}(\overline{V_{\calB}})$.
\end{theorem}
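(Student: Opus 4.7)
The inclusion $L(\calS) \subseteq \overline{V_{\calB(\calS)}}$ from Proposition \ref{prop:generic_point_represents_matroid} gives the easy direction $\dim L(\calS) \leq \dim V_{\calB(\calS)}$ automatically, so the content of the theorem is the reverse inequality. The plan is to compute both dimensions in closed form in terms of a minimal presentation of $\calS$, and match them through Ford's expected-codimension framework.

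First I would reduce to the case where $\calS$ is a minimal presentation of $\calB(\calS)$. Theorem \ref{thm:locus_is_matroidal} guarantees this does not affect $L(\calS)$, and Theorem \ref{thm:dimension_and_minimal_presentations} then gives the explicit value $\dim L(\calS) = \mathrm{nmd}(\calS) = -k + \sum_i |S_i|$. Equivalently, $\mathrm{codim}_{\Gr(k,n)}\, L(\calS) = k(n-k+1) - \sum_i |S_i|$. Because $\calB(\calS)$ is a positroid, Theorem \ref{thm:positroid_expected_codimension} gives $\mathrm{codim}\, V_{\calB(\calS)} = \mathrm{ec}(\calB(\calS))$, so Theorem \ref{thm:positroid_dimension} reduces to the purely combinatorial identity
\[
\mathrm{ec}(\calB(\calS)) \;=\; k(n-k+1) - \sum_i |S_i|,
\]
which I would aim to prove for all transversal matroids with minimal presentation $\calS$ (the positroid hypothesis is only needed in order to invoke Theorem \ref{thm:positroid_expected_codimension}).

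To evaluate $\mathrm{ec}(\calB(\calS))$ I would apply Theorem \ref{thm:expected_comdimension_via_flacets} with a collection $\calI$ built directly from $\calS$. A natural choice is $\calI = \{F(\calT) : \calT \subseteq \calS\}$, possibly augmented by singletons and connected-component closures so as to satisfy the closure hypothesis of Theorem \ref{thm:expected_comdimension_via_flacets}. Lemma \ref{lem:cyclic_flats_transversal_matroid} ensures all flacets of $\calB(\calS)$ of size greater than one are of this form, and the restriction behavior of transversal matroids described after that lemma lets me read the required connected-component data directly from the bipartite graph $\Gamma_\calS$. A reduction to the case where $\calB(\calS)$ is connected, via the noncrossing decomposition of Theorem \ref{thm:positroids_noncrossing_partitions} combined with the additivity of Lemma \ref{lem:expected_codimension_direct_sum}, should simplify the bookkeeping.

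The heart of the argument is then a Möbius-theoretic collapse of
\[
\mathrm{ec}_\calI(\calB(\calS)) \;=\; \sum_{I \in \calI} c(I)\, b_\calI(I).
\]
Theorem \ref{thm:minimal_presentation} identifies the $S_i \in \calS$ with the distinct cocircuits of $\calB(\calS)$, so each $[n] \setminus S_i$ is a distinct rank-$(k-1)$ flacet. The expected outcome is that $b_\calI$ vanishes on all intermediate flacets and evaluates to $1$ on each maximal flacet $[n] \setminus S_i$, so that only these survive in the sum with contribution $c([n] \setminus S_i) = (n - |S_i|) - (k-1) = n - k + 1 - |S_i|$; summing over the $k$ cocircuits recovers the target $k(n-k+1) - \sum_i |S_i|$. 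The principal obstacle is verifying this vanishing/value claim precisely: the poset of $F(\calT)$'s is a quotient of the Boolean lattice on $\calS$ rather than the Boolean lattice itself, so the Möbius inversion must be carried out on this quotient, using the unique maximal presentation from Theorem \ref{thm:min_presentations_have_same_sizes} to pin down the identifications. A fallback would be to induct on $|\calS|$ via Lemma \ref{lem:delete_element_preserve_minimal_presentation}, matching a single-element deletion in the presentation with a controlled change in flacet structure; both routes should land on the same formula.
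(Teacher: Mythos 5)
The statement you were asked to prove is Ford's theorem that for an \emph{arbitrary} positroid $\calB$ one has $\mathrm{ec}(\calB) = \mathrm{codim}(\overline{V_{\calB}})$; the paper does not prove this result at all, but imports it verbatim from \cite{ford:expected_codimension} and uses it as an ingredient. Your proposal never engages with that statement. What you sketch is a proof of Theorem \ref{thm:positroid_dimension} (that $\dim L(\calS) = \dim V_{\calB(\calS)}$ for a transversal positroid), and in the middle of it you invoke Theorem \ref{thm:positroid_expected_codimension} itself to convert $\mathrm{codim}(V_{\calB(\calS)})$ into $\mathrm{ec}(\calB(\calS))$. As an argument for the quoted statement this is circular, and the genuinely new content of your sketch --- the M\"obius computation showing $b_{\calI}$ vanishes except on the hyperplane flats $[n]\setminus S_i$, giving $\mathrm{ec}(\calB(\calS)) = k(n-k+1) - \sum_i |S_i|$ --- is exactly the paper's proof of Theorem \ref{thm:expected_codimension_of_transversal_matroid}, which is a different (and logically downstream) result.

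Beyond the circularity, the approach cannot be repaired within its own terms: any argument organized around a presentation $\calS$, its minimal presentations, cocircuits, and the flats $F(\calT)$ only sees transversal matroids, whereas Ford's theorem concerns all positroids, and Example \ref{ex:non_transversal_positroid} in this very paper exhibits a positroid that is not transversal. To actually prove the statement you would need two inputs your sketch does not supply: an independent formula for $\mathrm{codim}(\overline{V_{\calB}})$ for a general positroid (in \cite{ford:expected_codimension} this comes from the dimension theory of positroid varieties of Knutson, Lam, and Speyer \cite{knutson:juggling}, via bounded affine permutations), and a computation of $\mathrm{ec}(\calB)$ for a general positroid, which proceeds through the cyclic-interval description of flacets (Theorem \ref{thm:positroid_flacets}) and the noncrossing decomposition into connected components (Theorem \ref{thm:positroids_noncrossing_partitions}), not through transversal presentations. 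So the proposal both proves the wrong theorem and relies on the target statement as a lemma.
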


\begin{theorem} \label{thm:expected_codimension_of_transversal_matroid}
Let $\calS$ be a presentation of the transversal matroid $\calB(\calS)$. Then,
\begin{displaymath}
\mathrm{codim}(\calL(\calS)) = \mathrm{ec}(\calB(\calS)).
\end{displaymath}
\end{theorem}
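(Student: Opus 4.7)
The strategy is to reduce to a minimal presentation and then apply Ford's flacet characterization of expected codimension.

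First, by Theorem \ref{thm:locus_is_matroidal} I may replace $\calS$ by any minimal presentation of $\calB(\calS)$ without changing either side of the claimed equality. For a minimal presentation, Theorem \ref{thm:dimension_and_minimal_presentations} gives $\dim L(\calS) = -k + \sum_i|S_i|$, so the identity becomes
\begin{equation}
\mathrm{ec}(\calB(\calS)) \;=\; k(n - k + 1) - \sum_{i=1}^{k}|S_i|.
\end{equation}
If $\Gamma_\calS$ is disconnected, Lemma \ref{lem:expected_codimension_direct_sum} together with the corresponding product structure of $L(\calS)$ inside the ambient Grassmannian reduces us to the connected case, so I will assume $\calB(\calS)$ is connected.

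For connected minimal $\calS$, I would invoke Theorem \ref{thm:expected_comdimension_via_flacets} with the collection
\begin{displaymath}
\calI \;=\; \{F(\calT) : \calT \subseteq \calS\},
\end{displaymath}
adjoining any singleton flacets that might be missing. By Lemma \ref{lem:cyclic_flats_transversal_matroid} every flacet of size at least two already lies in $\calI$. The closure requirement of Theorem \ref{thm:expected_comdimension_via_flacets} under connected-component restrictions follows from transversality: $\calB(\calS)|_{F(\calT)}$ is the transversal matroid presented by $\calT|_{F(\calT)}$, and the connected components of the associated bipartite graph have ground sets of the form $F(\calT')$ for $\calT' \subseteq \calT$. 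Minimality of $\calS$ yields $\mathrm{rk}(F(\calT)) = |\calT|$ for canonical $\calT$ representing a flacet, so $c(F(\calT)) = |F(\calT)| - |\calT|$, a quantity expressible directly from the presentation.

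The main obstacle is evaluating $\sum_{I \in \calI}c(I)b_\calI(I)$. The map $\calT \mapsto F(\calT)$ is generally not injective, so $\calI$ is a proper quotient of the Boolean lattice $2^\calS$ and $\mu_\calI$ is not the Boolean Möbius function. My approach is to pick a canonical representative $\calT(F)$ for each $F \in \calI$ (for instance the inclusion-maximal $\calT$ with $F(\calT) = F$), rewrite $b_\calI$ in terms of these canonical indices, and then argue by inclusion-exclusion on $\calS$ that the resulting sum collapses to $k(n - k + 1) - \sum_i|S_i|$. An alternative that sidesteps $\mu_\calI$ entirely is to compute $\mathrm{ec}(\calB(\calS))$ on the full Boolean lattice $2^{[n]}$, using the König-type deficiency formula
\begin{displaymath}
k - \mathrm{rk}(J) \;=\; \max_{\calU \subseteq \calS}\Bigl(|\calU| - \bigl|J \cap \bigcup_{S \in \calU}S\bigr|\Bigr)
\end{displaymath}
to evaluate the inner Möbius inversion in closed form; the telescoping is then controlled by which $\calU$ realizes this maximum.
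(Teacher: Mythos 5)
Your proposal follows the same skeleton as the paper's proof (reduce to a connected, minimal presentation, then apply Ford's flacet formulation, Theorem~\ref{thm:expected_comdimension_via_flacets}, to a collection $\calI$ built from the flats $F(\calT)$), so the approach is sound. However, there is a genuine gap: the heart of the theorem is the explicit evaluation of $\sum_{I \in \calI} c(I) b_{\calI}(I)$, and your proposal never carries it out. You correctly identify this as ``the main obstacle,'' but then offer only a plan (``my approach is to pick a canonical representative \dots and argue by inclusion-exclusion'') and an unexecuted alternative via a K\"onig deficiency formula. Neither is a proof. Until one of those computations is actually done and shown to telescope to $\sum_i (n - |S_i|) - k(k-1)$, the argument is incomplete.

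A secondary issue is your choice of $\calI$. You take all sets $F(\calT)$ for $\calT \subseteq \calS$, whereas the paper restricts to those $F(\calT)$ with $\mathrm{rk}(F(\calT)) = |\calT|$ (plus singletons). That restriction is what makes the M\"obius calculation tractable: one can show $b_{\calI}(F(\calS)) = 0$ and $b_{\calI}(F(\calS \setminus S_i)) = 1$ directly, and then a clean downward induction using the inversion identity $k - \mathrm{rk}(F(\calT)) = \sum_{F(\calT') \supseteq F(\calT)} b_{\calI}(F(\calT'))$ forces $b_{\calI}(F(\calT)) = 0$ whenever $|\calT| < k-1$, because each of the $k - |\calT|$ many sets $F(\calS \setminus S_i) \supseteq F(\calT)$ contributes exactly $1$ and everything else vanishes. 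With your larger $\calI$, the poset contains additional elements whose $b$-values you would also have to control, and the non-injectivity of $\calT \mapsto F(\calT)$ that you flag becomes a real complication rather than a bookkeeping nuisance. I would recommend shrinking $\calI$ as the paper does (you still need to verify Ford's closure hypothesis for the shrunk collection, using that connected components of $\calB(\calS)|_{F(\calT)}$ are coloops or flats $F(\calT')$ with $\mathrm{rk}(F(\calT')) = |\calT'|$), and then actually running the downward induction on $|\calT|$.
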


\begin{proof}
Since $\dim(L(\calS))$ and $\mathrm{ec}(\calB(\calS))$ both behave under direct sums of matroids, we may suppose $\calB(\calS)$ is connected. Since $\dim(\calL(\calS))$ is invariant under the presentation of the matroid, we may suppose $\calS$ is a minimal presentation of $\calB(\calS)$.

Let $\calI$ be the collection of all individual elements of $[n]$ together with all flats $F(\calT)$ such that $\mathrm{rk}(F(\calT)) = |\calT|$. From Lemma \ref{lem:cyclic_flats_transversal_matroid}, $\calI$ contains all flacets of $\calB(\calS)$. If $\mathrm{rk}(F(\calT)) = |\calT|$, the connected components of $F(\calT)$ are either coloops or sets of the form $F(\calT')$ for some $\calT' \subseteq \calT$ with $\mathrm{rk}(F(\calT')) = |\calT'|$. So, $\calI$ satisfies the hypotheses of Theorem \ref{thm:expected_comdimension_via_flacets}.

We claim that for any $F(\calT) \in \calI$,
\begin{equation} \label{eqn:calculation_of_b}
b_{\calI}(F(\calT)) =
\begin{cases}
1 & \mbox{if $|\calT| = k-1$,} \\
0 & \mbox{otherwise.}
\end{cases}
\end{equation}

Evidently, $b_{\calI}(F(\calS)) = 0$. Since $\calS$ is a minimal presentation, point (iii) of Theorem \ref{thm:dimension_and_minimal_presentations} implies there is matching in $\Gamma_{\calS}$ saturating $[n] \setminus S_i$ for all $S_i \in \calS$. So, $F(\calS \setminus S_i) \in \calI$ for all $S_i \in \calS$. So, $b_{\calI}(F(\calS\setminus S_i)) = 1$ for all $S_i \in \calS$.

Let $F(\calT) \in \calI$ such that $|\calT|<k-1$ and suppose (\ref{eqn:calculation_of_b}) holds for all $F(\calT')$ with $|\calT'| > |\calT|$. Applying M\"obius inversion to (\ref{eqn:definition_of_b}),
\begin{displaymath}
k - \mathrm{rk}(F(\calT)) = \sum_{F(\calT') \supseteq F(\calT)} b_{\calI}(F(\calT')).
\end{displaymath}
Since $\calT \subset \calS \setminus S_i$ for each $S_i \in \calS \setminus \calT$, and $\calS \setminus S_i \in \calI$ for each $S_i$, our inductive hypothesis reduces the sum on the right hand side to
\begin{displaymath}
k - |\calT| + b_{\calI}(F(\calT)).
\end{displaymath}
Since $F(\calT) \in \calI$, $\mathrm{rk}(F(\calT)) = |\calT|$, and so $b_{\calI}(F(\calT)) = 0$. Inductively, (\ref{eqn:calculation_of_b}) holds for all $F(\calT) \in \calI$. Since $\calB(\calS)$ is connected, for all $j \in [n]$, $\mathrm{rk}(j) = 1$ and thus $c(j) = 0$. Then,
\begin{displaymath}
\begin{split}
\mathrm{ec}(\calB(\calS)) & = \mathrm{ec}_\calI(\calB(\calS)) \\
& = \sum_{S_i \in \calS} c(F(\calS \setminus S_i)) b_{\calI}(F(\calS \setminus S_i)) + \sum_{j \in [n]} c(j) b_{\calI}(j) \\
& = \sum_{S_i \in \calS} (n - |S_i|) - k(k-1) .
\end{split}
\end{displaymath}

From Theorem \ref{thm:dimension_and_minimal_presentations}, $\dim(\calL(\calS)) = \sum_{S_i \in \calS} |S_i| -k$. Since $\dim(\Gr(k,n)) = k(n-k)$,
\begin{displaymath}
\begin{split}
\mathrm{codim}(\calL(\calS)) & = k(n-k) - \left(\sum_{S_i \in \calS} |S_i| - k\right) \\
& = \mathrm{ec}(\calB(\calS)). \qedhere
\end{split}
\end{displaymath}
\end{proof}

\begin{proof}[Proof of Theorem \ref{thm:positroid_dimension}]
Let $\calS$ be a set system such that $\calB(\calS)$ is a positroid. Combining Theorems \ref{thm:positroid_expected_codimension} and \ref{thm:expected_codimension_of_transversal_matroid},
\begin{displaymath}
\dim(V_{\calB(\calS)}) = k(n-k) - \mathrm{ec}(\calB(\calS)) = \dim(\calL(\calS)).
\qedhere
\end{displaymath}
\end{proof}

\section{Basis shape loci and positroids} \label{sec:basis_shape_loci_and_positroids}

\begin{theorem} \label{thm:loci_are_positroids}
Let $\calS$ be a set system. If $\calB(\calS)$ is a positroid, then $\overline{L(\calS)} = \overline{V_{\calB(\calS)}}$, the positroid variety labelled by $\calB(\calS)$.
\end{theorem}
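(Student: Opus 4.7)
The plan is to combine Proposition \ref{prop:generic_point_represents_matroid} with the dimension count of Theorem \ref{thm:positroid_dimension} and the irreducibility of both $\overline{L(\calS)}$ and $\overline{V_{\calB(\calS)}}$. The forward inclusion is essentially immediate from the generic-point proposition, and the reverse inclusion then follows from a dimension argument between two irreducible subvarieties of $\Gr(k,n)$.

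First I would argue that $L(\calS) \cap V_{\calB(\calS)}$ is dense in $L(\calS)$. Proposition \ref{prop:generic_point_represents_matroid} says that a generic point of $L(\calS)$ represents $\calB(\calS)$; the set of such points is precisely $L(\calS) \cap V_{\calB(\calS)}$, and it is Zariski open in $L(\calS)$ since its complement is the intersection of $L(\calS)$ with the vanishing loci of the Pl\"ucker coordinates indexed by bases of $\calB(\calS)$. Consequently
\begin{displaymath}
\overline{L(\calS)} \;=\; \overline{L(\calS) \cap V_{\calB(\calS)}} \;\subseteq\; \overline{V_{\calB(\calS)}},
\end{displaymath}
giving one inclusion.

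Next I would establish irreducibility of both sides. The locus $L(\calS)$ is the image of the irreducible quasi-affine variety $\{M_\calS(\mathbf{y}) : y_{ij} \in \rr^{\ast},\ \mathrm{rk}(M_\calS(\mathbf{y})) = k\}$ under the row-span map to $\Gr(k,n)$, so $\overline{L(\calS)}$ is irreducible. Positroid varieties are irreducible (a standard fact, also compatible with the Marsh--Reitsch parameterization in Theorem \ref{thm:facts_about_marsh_reitsh_cell}), so $\overline{V_{\calB(\calS)}}$ is irreducible. By Theorem \ref{thm:positroid_dimension},
\begin{displaymath}
\dim \overline{L(\calS)} \;=\; \dim L(\calS) \;=\; \dim V_{\calB(\calS)} \;=\; \dim \overline{V_{\calB(\calS)}}.
\end{displaymath}
An irreducible closed subvariety of an irreducible variety of the same dimension must equal it, so $\overline{L(\calS)} = \overline{V_{\calB(\calS)}}$.

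All the substantive content lies in Theorem \ref{thm:positroid_dimension}, which was the focus of the previous section. Once that dimension equality is in hand, the current theorem follows formally from Proposition \ref{prop:generic_point_represents_matroid} together with two standard irreducibility observations, so no genuine obstacle remains at this step.
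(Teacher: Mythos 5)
Your proposal is correct, and the forward inclusion matches the paper's step for step, but your handling of the reverse inclusion takes a genuinely different route. The paper argues constructively through the Marsh--Rietsch parameterization: it takes a generic $V \in L(\calS)$ represented by $M_{\calS}(\mathbf{y})$, notes $V$ also lies in $\MR(\calB(\calS))$ and hence $G \cdot \MR_{\calB(\calS)}(V) = M_{\calS}(\mathbf{y})$ for some $G \in \mathit{Gl}(k)$, uses Theorem \ref{thm:positroid_dimension} (equality of dimensions) to deduce that the MR-parameters of $V$ are algebraically independent, and concludes that the same change-of-basis matrix $G$ furnishes a basis of shape $\calS$ at a \emph{generic} point of $\MR(\calB(\calS))$, so $L(\calS) \cap \MR(\calB(\calS))$ is dense in $\MR(\calB(\calS))$. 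Your argument instead invokes irreducibility of both closures together with the dimension equality to conclude that an irreducible closed subvariety of an irreducible variety of the same dimension must coincide with it. Both routes rest on Theorem \ref{thm:positroid_dimension} as the essential input; yours is shorter and more standard but imports the irreducibility of positroid varieties as an external fact (a theorem of Knutson--Lam--Speyer over $\cc$, or via the MR torus parameterization in Theorem \ref{thm:facts_about_marsh_reitsh_cell} for the real closure), whereas the paper's version re-derives what it needs from the MR matrix directly and so stays more self-contained. One point worth tightening if you keep your version: the paper works with the real Grassmannian, and irreducibility/dimension arguments are cleanest over an algebraically closed field, so you should either pass to $\cc$ and observe real points are dense, or lean explicitly on the MR cell being $(\rr^{\ast})^{\dim V_{\calB}}$ to get irreducibility of $\overline{V_{\calB(\calS)}}$ on the real side.
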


\begin{proof}
Suppose that $\calB(\calS)$ is a positroid. By Proposition \ref{prop:generic_point_represents_matroid}, $V_{\calB} \cap L(\calS)$ is dense in $L(\calS)$. So, $\overline{L(\calS)} \subseteq \overline{V_{\calB(\calS)}}$.

Using Theorem \ref{thm:locus_is_matroidal}, we may suppose that $\calS$ is a minimal presentation of $\calB(\calS)$. Let $V \in L(\calS)$ be the row space of a matrix  $M_{\calS}(\mathbf{y})$ obtained by evaluating the $x_{ij}$ at algebraically independent real numbers $y_{ij}$ in $M_{\calS}(\mathbf{x})$. Then, $V$ represents the matroid $\calB(\calS)$, and thus $V$ is in the Marsh-Reitsch cell $\MR(\calB(\calS))$. Let $\MR_{\calB(\calS)}(V)$ be the Marsh-Reitsch matrix representing $V$. Then, there is some $G \in Gl(k)$ such that
\begin{displaymath}
G \cdot \MR_{\calB(\calS)}(V) = M_{\calS}(\mathbf{y}).
\end{displaymath}

The Marsh-Rietsch matrix has exactly $\dim(V_{\calB})$ many free parameters. From Theorem \ref{thm:positroid_dimension}, $\dim(L(\calS)) = \dim(V_{\calB(\calS)})$ as well. So, $\MR_{\calB(\calS)}(V)$ must have been obtained by evaluating the entries of the Marsh-Rietsch matrix at algebraically independent parameters. Then, the change of basis matrix $G$ provides a basis of shape $\calS$ for a generic point in $\MR(\calB(\calS))$. So, $L(\calS) \cap \MR(\calB(\calS))$ is dense in $\MR(\calB(\calS))$ and thus $\overline{V_{\calB(\calS)}} = \overline{\MR(\calB(\calS))} \subseteq \overline{L(\calS)}$.
\end{proof}

\begin{remark}
In the case where $\calB(\calS)$ is not a positroid, little is known about $\overline{L(\calS)}$.
\end{remark}

\section{When is a transversal matroid a positroid?} \label{sec:transversal_positroids}

This section addresses the question of characterizing when a transversal matroid is a positroid. We call transversal matroids which are also positroids {\textit{transversal positroids}}.

\begin{example}
Let $\calS = \{134, 24\}$. Any point in $L(\calS)$ is the row span of a unique matrix of the form
\begin{displaymath}
\left(
\begin{array}{cccc}
1 & 0 & y_{13} & y_{14} \\
0 & 1 & 0 & y_{24}
\end{array}
\right),
\end{displaymath}
where $y_{ij} \in \rr^{\ast}$. Consider $L(\calS) \cap \Gr_{\geq 0}(k,n)$. For any point in this intersection, since $\Delta_{14} > 0$, we must have $y_{24} > 0$. Since $\Delta_{34} > 0$, $y_{13} > 0$ as well. However, this restriction forces $\Delta_{23} < 0$. Thus, $L(\calS) \cap \Gr_{\geq 0}(k,n) = \emptyset$. If $\calB(\calS)$ is a positroid $\overline{L(\calS)} = \overline{V_{\calB(\calS)}}$. However, when $\calB$ is a positroid, $\dim(V_{\calB} \cap \Gr_{\geq 0}(k,n)) = \dim(V_{\calB})$, a contradiction.

The fundamental obstruction illustrated in this example is that the set $S_1$ {\textit{crosses}} $S_2$, in the sense of Definition \ref{def:crossing_sets} below.
\end{example}

For any $a \in [n]$, let $\leq_a$ denote that $a^{th}$ cyclic shift of the usual total order on $[n]$. So, $a <_{a} a+1 <_{a} \cdots <_{a} n <_{a} 1 <_{a} \cdots <_{a} a-1$.

\begin{definition} \label{def:crossing_sets}
In a set system $\calS$, $S_i$ {\textit{crosses}} $S_j$ if there are $a, b, c, d \in [n]$ such that:
\begin{itemize}
\item[(i)] $a <_{a} b <_{a} c <_{a} d$,
\item[(ii)] $a, c \in S_i$, $a, c \notin S_j$,
\item[(iii)] $b, d \in S_j$, $b \notin S_i$.
\end{itemize}
\noindent
The set system $\calS$ is {\textit{noncrossing}} if there is no pair $S_i, S_j \in \calS$ with $S_i$ crossing $S_j$.
\end{definition}

This definition is not symmetric; $S_i$ can cross $S_j$ without $S_j$ crossing $S_i$.

\begin{theorem} \label{thm:no_crossing_implies_positroid}
Suppose that $\calS$ is a minimal presentation of $\calB(\calS)$ and that no set crosses another in $\calS$. Then, $\calB(\calS)$ is a positroid.
\end{theorem}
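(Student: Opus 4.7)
}

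The plan is to verify the conditions of Theorems \ref{thm:positroid_flacets} and \ref{thm:positroids_noncrossing_partitions}, working by contrapositive. First I would reduce to the case where $\calB(\calS)$ is connected. If $\calB(\calS) = \calB_1 \oplus \cdots \oplus \calB_r$ on ground sets $C_1, \ldots, C_r$, then the bipartite graph $\Gamma_{\calS}$ splits into $r$ components, so $\calS$ partitions as $\calS_1 \sqcup \cdots \sqcup \calS_r$ with each $\calS_p$ a minimal presentation of $\calB_p$ contained in $2^{C_p}$, and noncrossing of $\calS$ passes to each $\calS_p$. It remains to check that $\{C_1, \ldots, C_r\}$ is a noncrossing partition of $[n]$: if $C_p$ and $C_q$ cyclically interleave, I would use connectivity of $\Gamma_{\calS_p}$ and $\Gamma_{\calS_q}$ restricted to non-loops to walk from one interleaved side to the other, extracting adjacent sets in $\calS_p$ (resp.\ $\calS_q$) whose supports straddle the gap, producing a crossing pair and contradicting the hypothesis. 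With the partition noncrossing, Theorem \ref{thm:positroids_noncrossing_partitions} reduces us to the connected case.

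In the connected case, Theorem \ref{thm:positroid_flacets} reduces the problem to showing every flacet is a cyclic interval. Singleton flacets are automatic. For larger flacets, Lemma \ref{lem:cyclic_flats_transversal_matroid} writes them as $F(\calT)$ with $|\calT| = \mathrm{rk}(F(\calT))$, and $\calT|_{F(\calT)}$ (after deleting empty sets) is a minimal presentation of the connected matroid $\calB(\calS)|_{F(\calT)}$, so the bipartite graph between $\calT$ and $F(\calT)$ is connected. Working by contrapositive, suppose $F = F(\calT)$ is not a cyclic interval, so $F$ has at least two maximal cyclic arc components. Connectivity of the bipartite graph forces some $S_i \in \calT$ to meet two different cyclic components of $F$; choose $x, y \in S_i \cap F$ in different components and cyclically adjacent within $S_i \cap F$ along a chosen arc direction. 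The open arc from $x$ to $y$ in this direction must then meet $[n] \setminus F$ at some element $z$, and $z$ lies in some $S_j \in \calS \setminus \calT$ with $S_j \cap F = \emptyset$ (so $x, y \notin S_j$).

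The target is then to conclude that $S_i$ crosses $S_j$: the pair $a = x$, $c = y$ is in $S_i \setminus S_j$, and $z$ between them lies in $S_j$. Two obligations remain. First, we need such a $z$ with $z \notin S_i$, giving the required $b \notin S_i$. Second, we need an element of $S_j$ on the complementary arc from $y$ to $x$, giving $d \in S_j$ with $a <_a b <_a c <_a d$. For the second, I would use that $\calB(\calS)$ is connected and of rank $\geq 2$, so no cocircuit is a singleton; since $S_j$ is a cocircuit (by Theorem \ref{thm:minimal_presentation}) contained in $[n] \setminus F$, its elements cannot all sit in the single gap between $x$ and $y$, for otherwise $S_j$ would lie in a single cyclic component of $[n] \setminus F$ and a cocircuit-counting argument inside $\calB(\calS)/F$ (presented by $\calS \setminus \calT$) would contradict either connectivity of $\calB(\calS)$ or minimality of the presentation.

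The main obstacle is arranging $z \notin S_i$: an arbitrary $S_i \in \calT$ may contain elements of $[n] \setminus F$, so it is not automatic that a gap element between $x$ and $y$ lies outside $S_i$. My expectation is that this is handled by descending first to a flacet $F$ minimal among those which are not cyclic intervals, so that beneath $F$ in the flat lattice the gap structure is maximally simple, combined with a careful use of the fact that each $S_i$ is a cocircuit (whence its intersection with $[n]\setminus F$ is constrained by how $\calS\setminus\calT$ presents $\calB(\calS)/F$). If the initial $x,y$ produces only $z \in S_i$, one either slides to a different pair extracted from the same bipartite-graph path or swaps the roles of $S_i$ and $S_j$ and seeks instead a crossing of $S_j$ by $S_i$ from the opposite direction. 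Making this bookkeeping rigorous, rather than the overall strategy, is where the real work of the proof lies.
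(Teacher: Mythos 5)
Your overall strategy matches the paper's: argue by contrapositive via Theorems \ref{thm:positroid_flacets} and \ref{thm:positroids_noncrossing_partitions}, reduce to the connected case, write a non-interval flacet as $F=F(\calT)$, and use connectivity of $\Gamma_{\calT|_F}$ (coming from connectivity of $\calB(\calS)|_F$) to find $S_i\in\calT$ with two elements $a,c$ in different arc components of $F$. The divergence, and the gap you yourself flag, is in producing the second set $S_j$. You look for a single element $z$ in the arc between $x$ and $y$, then face two obligations—arranging $z\notin S_i$, and finding a fourth element $d\in S_j$ on the complementary arc—which you propose to handle with a cocircuit-counting argument inside $\calB(\calS)/F$. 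That route is not sound as stated: $S_j$ being a cocircuit of size at least two does not by itself prevent $S_j$ from lying entirely within one arc component of $[n]\setminus F$, so the obligation of producing $d$ is not actually discharged.

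The paper avoids both difficulties by a symmetric application of the same connectivity argument to the contraction. Because $F$ is a flacet, $\calB(\calS)/F$ is connected; because $\mathrm{rk}(F(\calT))=|\calT|$, the system $\calS\setminus\calT$ is a \emph{minimal} presentation of $\calB(\calS)/F$ on $[n]\setminus F$ (per the remarks following Lemma \ref{lem:cyclic_flats_transversal_matroid}). Since $[n]\setminus F$ is also not a cyclic interval, connectivity of $\Gamma_{(\calS\setminus\calT)|_{[n]\setminus F}}$ produces an $S_j\in\calS\setminus\calT$ with $b,d\in S_j$ in distinct arc components of $[n]\setminus F$ directly—no cocircuit accounting. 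Then $a,c\notin S_j$ is automatic from the definition of $F(\calT)$, and $S_j\nsubseteq S_i$ follows from minimality of $\calS$ via condition (iii) of Theorem \ref{thm:dimension_and_minimal_presentations} applied to $\{S_i,S_j\}$, which is what lets you take one of $b,d$ outside $S_i$. So the missing idea in your plan is precisely this dual use of connectivity on $\calB(\calS)/F$; with it, the detour through sliding to other pairs or swapping the roles of $S_i$ and $S_j$ becomes unnecessary.
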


\begin{proof}
Suppose that $\calB(\calS)$ is not a positroid. By Theorem \ref{thm:positroids_noncrossing_partitions}, $\calB(\calS)$ can fail to be a positroid by having a connected component which is not a positroid, or by having its connected components form a crossing partition. Since $\calS$ is a minimal presentation, connected components of $\calB(\calS)$ correspond to connected components of $\Gamma_{\calS}$. Evidently, crossing connected components in $\Gamma_{\calS}$ necessitate crossing sets in $\calS$. So, we may suppose $\calB(\calS)$ is connected.

Theorem \ref{thm:positroid_flacets} says there is a flacet $F$ of $\calB(\calS)$ which is not a cyclic interval. Lemma \ref{lem:cyclic_flats_transversal_matroid} implies $F = F(\calT)$ for some $\calT \subset \calS$ with $\mathrm{rk}(\calT) = |\calT|$. Since $F$ is a cyclic flat, $\calT|_F$ is a minimal presentation of $\calB(\calS)|_{F}$. Then, $\Gamma_{\calT|_{F}}$ is connected since $\calB(\calS)|_{F}$ is connected. So, there is an $S_i \in \calT$ and $a, c \in S_i$ where $a$ and $c$ are in different cyclic intervals of $F$.

Since $\mathrm{rk}(F) = |\calT|$, $\calB(\calS)/F$ is a transversal matroid with presentation $\calS \setminus \calT$ and this presentation is minimal. Then, there os some $S_j \in \calS \setminus \calT$ amd $b, d \in S_j$ where $b$ and $d$ are in different cyclic intervals of $[n] \setminus F$.

By the definition of $F(\calT)$, $a, c \notin S_j$. Since $\calS$ is a minimal presentation, Theorem \ref{thm:dimension_and_minimal_presentations} implies $S_j \nsubseteq S_i$. So, we may take at least one of $b$ or $d \notin S_i$. Then, $S_i$ crosses $S_j$.
\end{proof}

\begin{conjecture} \label{conj:transversal_positroids}
Let $\calB$ be a transversal matroid which is also a positroid. Then, there is a noncrossing minimal presentation $\calS$ of $\calB$.
\end{conjecture}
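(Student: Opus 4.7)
The plan is to induct on the rank $k$, combining the flacet structure of positroids with the transversal matroid machinery. First I reduce to the connected case. Theorem~\ref{thm:positroids_noncrossing_partitions} splits $\calB$ into connected components whose ground sets form a noncrossing partition of $[n]$, and $\calB$ splits as a direct sum accordingly, each component again transversal (since $\Gamma_\calS$ decomposes along the components). A noncrossing minimal presentation of each component, viewed on the common ground set $[n]$, is a noncrossing minimal presentation of $\calB$: no set supported on one component can cross any set supported on another, since their supports lie in disjoint cyclic intervals.

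For the inductive step on a connected transversal positroid, I would pick a nontrivial flacet $F$. By Theorem~\ref{thm:positroid_flacets}, $F$ is a cyclic interval, and by Lemma~\ref{lem:cyclic_flats_transversal_matroid}, $F = F(\calT)$ for some subsystem $\calT$ of a fixed minimal presentation with $|\calT| = \mathrm{rk}(F)$. Both the restriction $\calB|_F$ and the contraction $\calB/F$ are transversal positroids of strictly smaller rank, presented by $\calT|_F$ and $\calS \setminus \calT$ respectively (end of Section~\ref{sec:transversal_matroids}), and each remains a positroid because positroids are closed under restriction and contraction to cyclic intervals. Applying the inductive hypothesis yields noncrossing minimal presentations $\calT'$ of $\calB|_F$ supported inside $F$, and $\calU'$ of $\calB/F$ supported inside $[n] \setminus F$.

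The candidate presentation is $\calS' = \calT' \cup \calU'$. Sets of $\calT'$ lie in the cyclic interval $F$ and sets of $\calU'$ lie in the complementary cyclic interval, so no crossing between a $\calT'$-set and a $\calU'$-set can occur, and noncrossing within each part is inherited from the induction. What remains is to show that $\calS'$ genuinely presents $\calB$ and that each set in it is a distinct cocircuit of $\calB$ (Theorem~\ref{thm:minimal_presentation}). In general, $\calB$ strictly refines the direct sum $\calB|_F \oplus \calB/F$, so some sets of $\calT'$ will have to be enlarged to include elements of $[n] \setminus F$ in order to recover the correct bases of $\calB$; these enlargements must preserve minimality and must not introduce crossings with $\calU'$.

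This last reassembly step is the main obstacle. A natural route is to associate to each cocircuit $C \in \calT'$ of $\calB|_F$ its minimal extension $\widetilde{C} \subseteq [n]$ to a cocircuit of $\calB$, and verify that no $\widetilde{C}$ crosses any set of $\calU'$ and that the extended system together with $\calU'$ realizes $\calB$. The special cases treated in Section~\ref{sec:transversal_positroids} ought to correspond to situations where the extensions are trivial or forced: for presentations with sets of equal size, the rigid cardinality pins down the supports; for Richardson positroids, the staircase Le-diagram forces a compatible extension by inspection. A proof in full generality would likely require a structural lemma describing cocircuit extensions through flacets coherently across all flacets at once, combining Theorem~\ref{thm:min_presentations_have_same_sizes} (which controls the relation between different minimal presentations) with the combinatorics of the decorated permutation of the positroid.
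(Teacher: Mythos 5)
You should be aware up front that the paper itself does \emph{not} prove this statement; it appears as Conjecture~\ref{conj:transversal_positroids}, and the paper only establishes it in two special cases (Proposition~\ref{prop:richardsons_behave} for lattice path matroids and Proposition~\ref{prop:all_sets_same_size} when all sets in a minimal presentation have equal size), together with the strengthened Conjecture~\ref{conj:refined} supported by computation. So there is no ``paper proof'' to match, and your proposal is correctly framed as a plan rather than a completed argument.

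Your approach is genuinely different from the route the paper takes for its partial results. The paper works \emph{within} a single minimal presentation and moves between minimal presentations by the pivots of Definition~\ref{def:pivot}: Lemma~\ref{lem:finding_exact_subset} locates an exact subsystem allowing a pivot whenever a crossing exists in a positroid, and Proposition~\ref{prop:all_sets_same_size} shows that for all-same-size presentations the Gale-minimal presentation obtained by these pivots is automatically noncrossing. You instead try to disassemble $\calB$ along a flacet $F$, recurse on $\calB|_F$ and $\calB/F$, and reassemble. The reduction to the connected case and the observation that $F$ is a cyclic interval are fine, as is the observation that $\calT|_F$ and $\calS\setminus\calT$ give (minimal) presentations of the restriction and contraction. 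But the gap you flag at the reassembly step is genuine and, in my view, fatal to the plan as stated: $\calT'\cup\calU'$ presents $\calB|_F\oplus\calB/F$, which is in general a strict coarsening of $\calB$, and recovering $\calB$ requires extending each set of $\calT'$ (a cocircuit of $\calB|_F$) to a cocircuit of $\calB$. There is no canonical way to do this compatibly with noncrossing, and worse, you have no control over which noncrossing minimal presentation of $\calB|_F$ the induction returns --- two distinct minimal presentations of $\calB|_F$ can differ precisely in ways that make one extendible and the other not. Any argument along these lines would need exactly the kind of ``coherent cocircuit extension'' lemma you mention, and nothing in the paper's toolkit (nor in the standard transversal matroid literature) provides it; this is essentially why the paper pivots sets in place via exact subsystems rather than decomposing and gluing. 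If you want to pursue the problem, your best bet is probably the paper's own refinement, Conjecture~\ref{conj:refined}: show that for a connected transversal positroid at least one $a$-Gale-minimal presentation is noncrossing, which reduces the question to a purely order-theoretic statement about the exact subsystems of Definition~\ref{def:exact_subset} and the pivots they permit.
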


We will show that Conjecture holds in the case where $\overline{V_{\calB}}$ is a Richardson variety, and the case where all sets in a minimal presentation of the matroid $\calB$ have the same size. A strengthening of this conjecture, providing an algorithmic method of producing a noncrossing minimal presentation is given in Conjecture \ref{conj:refined}.

Let $I = \{i_1, i_2, \dots, i_k\}$ and $J = \{j_1, j_2, \dots, j_k\}$ be subsets of $[n]$ with $i_1 < i_2 < \cdots < i_k$ and $j_1 < j_2 < \cdots < j_k$. Say that $I \leq J$ if $i_\ell \leq j_\ell$ for all $1 \leq \ell \leq k$. This partial order is called the {\textit{Gale order}} on $\binom{[n]}{k}$. If $I \leq J$, the {\textit{lattice path matroid}} defined by $I, J$ is
\begin{equation} \label{eqn:lattice_path_matroid}
\calB(I,J) = \left\{B \in \binom{[n]}{k} : I \leq B \leq J\right\}.
\end{equation}

The {\textit{Richardson variety}} associated to $I, J$ is $\overline{V_{\calB(I,J)}}$. All lattice path matroids are positroids and thus all Richardson varieties are positroid varieties.

\begin{proposition} \label{prop:richardsons_behave}
Let $I, J \in \binom{[n]}{k}$ with $I \leq J$. The lattice path matroid $\calB(I,J)$ is a transversal matroid with a noncrossing minimal presentation.
\end{proposition}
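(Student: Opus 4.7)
My plan is to write down the canonical transversal presentation of $\calB(I,J)$ by intervals and then check in turn that it is a presentation, is minimal, and is noncrossing. Concretely, let $S_\ell = [i_\ell, j_\ell] := \{i_\ell, i_\ell+1, \dots, j_\ell\} \subseteq [n]$ for each $1 \le \ell \le k$ and take $\calS = \{S_1, \dots, S_k\}$.

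To see $\calB(\calS) = \calB(I, J)$, first observe that if $B = \{b_1 < \dots < b_k\} \in \calB(I, J)$, then by definition of the Gale order $i_\ell \le b_\ell \le j_\ell$, so $b_\ell \in S_\ell$ and the identity gives a saturating matching in $\Gamma_\calS$. Conversely, if $B \in \calB(\calS)$ has a saturating matching but $b_m > j_m$ for some $m$, then $b_m, b_{m+1}, \dots, b_k$ all exceed $j_m$; using the strict chain $j_1 < j_2 < \cdots < j_k$, none of these $k-m+1$ elements can lie in $S_1, \dots, S_m$, so they must match among the $k - m$ sets $S_{m+1}, \dots, S_k$, which is impossible. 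The symmetric argument on $i_1 < \cdots < i_k$ precludes $b_m < i_m$, so $B \in \calB(I, J)$.

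For minimality I would apply criterion (iii) of Theorem \ref{thm:dimension_and_minimal_presentations}. Given any subfamily $\calT = \{S_{\ell_1}, \dots, S_{\ell_m}\} \subseteq \calS$ with $\ell_1 < \cdots < \ell_m$, choose $p$ so that $S_{\ell_p}$ has maximal size in $\calT$. The $m - 1$ elements $i_{\ell_1}, \dots, i_{\ell_{p-1}}, j_{\ell_{p+1}}, \dots, j_{\ell_m}$ are pairwise distinct by strict monotonicity of $I$ and $J$, and each lies strictly outside $S_{\ell_p} = [i_{\ell_p}, j_{\ell_p}]$ because $i_{\ell_q} < i_{\ell_p}$ for $q < p$ and $j_{\ell_q} > j_{\ell_p}$ for $q > p$. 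Together with $S_{\ell_p}$ they contribute at least $|S_{\ell_p}| + (m-1) = \max_{T \in \calT} |T| + |\calT| - 1$ distinct elements to $\bigcup \calT$, which is condition (iii).

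Finally, for noncrossing, suppose toward contradiction that $S_i$ crosses $S_j$ via $a <_a b <_a c <_a d$ with $a, c \notin S_j$ and $b, d \in S_j$. The four points appear in this cyclic order around $[n]$, so $b$ and $d$ lie on opposite arcs of $[n] \setminus \{a, c\}$. But $S_j$ is an ordinary interval of $[n]$, so its complement is a cyclic interval containing $\{a, c\}$, which forces $S_j$ itself to lie entirely inside a single arc of $[n] \setminus \{a, c\}$, contradicting $b, d \in S_j$. The main technical hurdle is the pigeonhole matching step in the presentation check; once that is set up, minimality and noncrossing both follow essentially for free from the strict monotonicity of the endpoint sequences and from the fact that the crossing obstruction is impossible whenever the receiving set is a cyclic interval.
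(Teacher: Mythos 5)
Your proof is correct and follows essentially the same route as the paper: take the interval presentation $S_\ell = \{i_\ell, i_\ell+1, \dots, j_\ell\}$, verify it is a minimal presentation, and observe that intervals cannot cross. The paper simply cites Section 4 of Bonin's transversal-matroid notes for the fact that $\calS$ is a minimal presentation of $\calB(I,J)$, whereas you supply a self-contained pigeonhole/Gale-order verification and a direct check of criterion (iii) of Theorem~\ref{thm:dimension_and_minimal_presentations}; both proofs conclude with the identical observation that a crossing is impossible when every $S_i$ is an interval.
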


\begin{proof}
Let $I = \{i_1, i_2, \dots, i_k\}$ and $J = \{j_1, j_2, \dots, j_k\}$ with $i_1 < i_2 < \cdots < i_k$ and $j_1 < j_2 < \cdots < j_k$. For $1 \leq \ell \leq k$, let
\begin{displaymath}
S_\ell = \{m : i_\ell \leq m \leq j_\ell\}.
\end{displaymath}
\noindent
It is well known, e$.$g$.$ Section 4 of \cite{bonin:introduction_to_transversal_matroids}, that $\calB(I,J)$ is a transversal matroid with minimal presentation $\calS = \{S_1, S_2, \dots, S_k\}$. Since every $S_i$ is an interval, no two sets cross.
\end{proof}

\begin{proposition} \label{prop:all_sets_same_size}
Let $\calB$ be a transversal positroid such that all sets in a minimal presentation of $\calB$ have the same size. Then, $\calB$ has a noncrossing minimal presentation.
\end{proposition}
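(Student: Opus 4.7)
\emph{Plan.} The plan is to reduce to the connected case and then construct a noncrossing minimal presentation by exhibiting $k$ cocircuits of $\calB$ that are cyclic intervals of length $s$.

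First, suppose $\calB = \calB_1 \oplus \cdots \oplus \calB_r$ with connected components $\calB_i$. By Theorem~\ref{thm:positroids_noncrossing_partitions}, the ground sets of the $\calB_i$ form a noncrossing partition of $[n]$, and each $\calB_i$ is itself a transversal positroid. A minimal presentation of $\calB$ splits as a disjoint union of minimal presentations of the $\calB_i$, and any two sets supported in distinct blocks of a noncrossing partition of $[n]$ cannot cross each other in the sense of Definition~\ref{def:crossing_sets}. The constant-size hypothesis passes to each component individually (by restricting a given minimal presentation), so it suffices to prove the statement assuming $\calB$ is connected.

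Assume now that $\calB$ is connected. By Theorem~\ref{thm:min_presentations_have_same_sizes}, every minimal presentation of $\calB$ has all sets of the common size $s$. The key observation is that two cyclic intervals of equal length $s$ never cross in the sense of Definition~\ref{def:crossing_sets}: for cyclic intervals of the same size, the set differences $S_i \setminus S_j$ and $S_j \setminus S_i$ are themselves disjoint cyclic intervals (verified by a short case analysis on whether the two intervals are disjoint, partially overlap, or coincide), which prevents the interleaving witnesses $a, b, c, d$ from existing. Hence it suffices to produce a minimal presentation of $\calB$ whose $k$ sets are all cyclic intervals of length $s$.

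The construction I propose is to show that for each $a \in [n]$ the cyclic interval of length $s$ starting at $a$ is a cocircuit of $\calB$, and then to select $k$ of these satisfying Bondy's Hall-type condition~(\ref{eqn:set_size_condition}). The positroid hypothesis is invoked through Theorem~\ref{thm:positroid_flacets}: every flacet of $\calB$ is a cyclic interval. Combined with Lemma~\ref{lem:cyclic_flats_transversal_matroid} and the constant-size hypothesis, the cocircuit system of $\calB$ should be forced to contain $n$ cyclic intervals of length $s$, from which a balanced subset of $k$ (analogous to $\{i, i+1, \ldots, i+n-k\}$ for $i = 1, \ldots, k$ in the uniform case $\calB = U_{k,n}$) yields the desired noncrossing minimal presentation. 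The main obstacle is verifying this cyclic cocircuit structure in general: the hypotheses may force $\calB$ to be highly restricted (potentially forcing $\calB$ to be uniform, up to the direct sum decomposition already handled), in which case the explicit uniform-case construction closes the argument. An alternative attack, in case this structural claim is elusive, is an uncrossing procedure on an arbitrary minimal presentation $\calS$ with all $|S_i| = s$, replacing crossing pairs by noncrossing pairs of the same sizes while preserving $\calB(\calS)$ via a careful manipulation of the matching structure in $\Gamma_{\calS}$ from Proposition~\ref{prop:generic_point_represents_matroid}, with the positroid condition ensuring the swap is well-defined.
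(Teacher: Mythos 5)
Your reduction to the connected case is sound, and the observation that two cyclic intervals of equal length cannot cross in the sense of Definition \ref{def:crossing_sets} is correct, but the central structural claim on which your argument rests is false. You want to show that for a connected transversal positroid $\calB$ whose minimal presentation has all sets of size $s$, every cyclic interval of length $s$ is a cocircuit, and then select $k$ of them. Consider $\calS = \{1234, 3456\}$ on $[6]$, so $k=2$, $s=4$. This is a minimal presentation (both sets are cocircuits: $\{5,6\}$ and $\{1,2\}$ are hyperplanes), the matroid $\calB(\calS) = \binom{[6]}{2} \setminus \{12, 56\}$ is connected, and one checks its flacets are cyclic intervals, so it is a positroid. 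But $\{2,3,4,5\}$ is not a cocircuit: its complement $\{1,6\}$ is a basis, so $\{2,3,4,5\}$ fails to meet every basis. Only two of the six cyclic intervals of length $4$ are cocircuits here. This also refutes your speculation that the hypotheses force $\calB$ to be uniform. You acknowledge this step as ``the main obstacle,'' and indeed it is where the proof breaks down; the backup plan of ``a careful manipulation of the matching structure'' is not supplied.

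The paper's proof takes a genuinely different route and does not try to produce an interval presentation at all. It takes $\calS$ to be Gale minimal, and supposes for contradiction that $S$ crosses $S'$. The key input (Lemma \ref{lem:finding_exact_subset}) is that because $\calB(\calS)$ is a positroid, a crossing pair must lie in a common exact subsystem $\calT \subseteq \calS$. Gale minimality of $\calS$ is inherited by the exact subsystem, and when all sets of an exact subsystem have the same size, its unique Gale minimal presentation has the explicit nested form $T_j = \{t_1, \ldots, t_{|S|-1}, t_{|S|+j-1}\}$, i.e., all $T_j$ share a common $(|S|-1)$-element core and differ in a single element. Such a family cannot contain a crossing pair since $|T_i \setminus T_j| = 1$, giving the contradiction. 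The sets produced are not cyclic intervals in general, so your noncrossing-intervals observation, while true, cannot be the engine of the proof. The tools you would need but do not invoke are the pivot machinery (Definition \ref{def:pivot}, Theorems \ref{thm:pivot}, \ref{thm:matroidal_pivot}), the Gale minimal normal form, and above all Lemma \ref{lem:finding_exact_subset}, which is where the positroid hypothesis actually enters.
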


We require some technical machinery to prove this proposition. This machinery is presented in a way that is functional even when not all sets in $\calS$ have the same size, allowing us to give a refined version of Conjecture \ref{conj:transversal_positroids}. Theorems \ref{thm:pivot} and \ref{thm:matroidal_pivot} provide a procedure of pivoting between different minimal presentations of $\calB(\calS)$. Each pivot changes a single set in $\calS$ while preserving the matroid $\calB(\calS)$. Lemma \ref{lem:finding_exact_subset} shows that if $\calS$ is crossing and $\calB(\calS)$ is a positroid, that there is a pivot that removes this crossing. In the case where all sets in $\calS$ have the same size, these pivots can simultaneously remove all crossings in the set system.

\begin{definition} \label{def:exact_subset}
Call a subsystem $\calT \subseteq \calS$ {\textit{exact}} if
\begin{equation} \label{eqn:exact_subset}
\left| \bigcup_{T \in \calT} T \right| = |\calT| + \max_{T \in \calT}\left( |T| \right) - 1.
\end{equation}
\noindent
That is, the inequality (\ref{eqn:set_size_condition}) from point (iii) of Theorem \ref{thm:dimension_and_minimal_presentations} holds with equality for $\calT$.
\end{definition}

\begin{lemma} \label{lem:contracting_to_exact_subdiagram}
Let $\calS$ be a minimal presentation of $\calB(\calS)$, and let $\calT$ be an exact subsystem of $\calS$. Then, $\mathrm{rk}(F(\calS \setminus \calT)) = |\calS \setminus \calT|$.
\end{lemma}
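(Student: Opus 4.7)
The plan is to unpack $F(\calS \setminus \calT)$ explicitly and then compute its rank in the transversal matroid $\calB(\calS)$ by the maximum matching interpretation, splitting into an easy upper bound and a Hall-type lower bound.

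First I would observe that by the definition in (\ref{eqn:cyclic_flat_transversal_matroid}),
\[
F(\calS \setminus \calT) = [n] \setminus \bigcup_{T \in \calT} T.
\]
The rank of this set in $\calB(\calS)$ is the maximum size of a matching in $\Gamma_{\calS}$ from $\calS$ to a subset of $F(\calS \setminus \calT)$. Since no element of $F(\calS \setminus \calT)$ lies in any $T \in \calT$, the only vertices of $\calS$ available to match against $F(\calS \setminus \calT)$ are those in $\calS \setminus \calT$. This immediately gives $\mathrm{rk}(F(\calS \setminus \calT)) \leq |\calS \setminus \calT|$.

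For the reverse inequality, I would show that Hall's matching condition holds for the bipartite graph between $\calS \setminus \calT$ and $F(\calS \setminus \calT)$; a saturating matching then produces an independent set of size $|\calS \setminus \calT|$ inside $F(\calS \setminus \calT)$. Concretely, for any $\calU \subseteq \calS \setminus \calT$ we need
\[
\left| \Bigl( \bigcup_{U \in \calU} U \Bigr) \setminus \bigcup_{T \in \calT} T \right| \geq |\calU|,
\]
equivalently $\left| \bigcup_{S \in \calU \cup \calT} S \right| \geq |\calU| + \bigl| \bigcup_{T \in \calT} T \bigr|$. The exactness of $\calT$ (Definition~\ref{def:exact_subset}) rewrites the right-hand side as $|\calU| + |\calT| + \max_{T \in \calT}|T| - 1$.

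The key input for the left-hand side is the minimal presentation condition (iii) of Theorem~\ref{thm:dimension_and_minimal_presentations} applied to $\calU \cup \calT$: since $\calU$ and $\calT$ are disjoint and $\calT \subseteq \calU \cup \calT$,
\[
\left| \bigcup_{S \in \calU \cup \calT} S \right| \geq |\calU \cup \calT| + \max_{S \in \calU \cup \calT}|S| - 1 \geq |\calU| + |\calT| + \max_{T \in \calT}|T| - 1,
\]
which is exactly what is needed. Hall's theorem then produces a matching of size $|\calS \setminus \calT|$ into $F(\calS \setminus \calT)$, giving $\mathrm{rk}(F(\calS \setminus \calT)) \geq |\calS \setminus \calT|$ and completing the proof. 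The main step requiring care is lining up the exactness equality with the Hall inequality coming from minimality; everything else is bookkeeping around the matching interpretation of rank in a transversal matroid.
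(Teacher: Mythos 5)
Your proposal is correct and follows essentially the same route as the paper: you verify Hall's condition for matching $\calS \setminus \calT$ into $F(\calS \setminus \calT)$ by applying inequality (\ref{eqn:set_size_condition}) to $\calU \cup \calT$ and subtracting the exactness equality for $\calT$, which is exactly the displayed computation in the paper's proof. The only cosmetic difference is that you spell out the easy upper bound $\mathrm{rk}(F(\calS \setminus \calT)) \leq |\calS \setminus \calT|$, which the paper leaves implicit.
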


\begin{proof}
Let $F = F(\calS \setminus \calT)$, the flat from (\ref{eqn:cyclic_flat_transversal_matroid}). For any $\calS' \subset \calS \setminus \calT$,
\begin{displaymath}
\begin{split}
\left| \bigcup_{S \in \calS'} S \cap F \right| & = \left| \bigcup_{S \in \calS' \cup \calT} S \right| - \left| \bigcup_{S \in \calT} S \right| \\
& \geq \left(|\calS' \cup \calT| + \max_{S \in \calS' \cup \calT}(|S|) - 1\right) - \left(|\calT| + \max_{S \in \calT}(|S|) - 1\right) \\
& \geq |\calS'|.
\end{split}
\end{displaymath}
\noindent
The positive part of the inequality on the second line comes from (\ref{eqn:set_size_condition}), and the negative part comes from the assumption $\calT$ was exact. Then, Hall's Matching Theorem implies there is a matching in $\Gamma_{\calS}$ from $\calS \setminus \calT$ to $F$ saturating $\calS \setminus \calT$. So, $\mathrm{rk}(F) = |\calS \setminus \calT|$.
\end{proof}

\begin{lemma} \label{lem:unions_of_exact_subsystems}
Let $\calT$ and $\calT'$ be exact subsystems with $S \in \calT, \calT'$ such that $|S| = \max_{T \in \calT \cup \calT'}(|T|)$. Then, $\calT \cup \calT'$ is an exact subsystem.
\end{lemma}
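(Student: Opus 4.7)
The plan is to combine an inclusion--exclusion count with the minimal-presentation inequality (\ref{eqn:set_size_condition}) applied to three subsystems: $\calT$, $\calT'$, and their intersection $\calT \cap \calT'$. Write $A = \bigcup_{T \in \calT} T$ and $B = \bigcup_{T \in \calT'} T$, and set $m = |S|$, which by hypothesis equals $\max_{T \in \calT \cup \calT'}(|T|)$. Exactness of $\calT$ and $\calT'$ then reads $|A| = |\calT| + m - 1$ and $|B| = |\calT'| + m - 1$.

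The crux is to bound $|A \cap B|$ from below in terms of $|\calT \cap \calT'|$. Every $T \in \calT \cap \calT'$ lies in both $A$ and $B$, so $\bigcup_{T \in \calT \cap \calT'} T \subseteq A \cap B$. Since $S$ lies in the intersection, $\calT \cap \calT'$ is nonempty, and because $|S| = m$ is already maximal over the larger union $\calT \cup \calT'$, it remains maximal over the subsystem $\calT \cap \calT'$. Applying (\ref{eqn:set_size_condition}) to $\calT \cap \calT'$, viewed as a subsystem of the minimal presentation $\calS$, then yields
\[
|A \cap B| \;\geq\; \left|\bigcup_{T \in \calT \cap \calT'} T\right| \;\geq\; |\calT \cap \calT'| + m - 1.
\]

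Inclusion--exclusion now closes the argument:
\[
|A \cup B| \;=\; |A| + |B| - |A \cap B| \;\leq\; (|\calT| + m - 1) + (|\calT'| + m - 1) - (|\calT \cap \calT'| + m - 1) \;=\; |\calT \cup \calT'| + m - 1,
\]
while (\ref{eqn:set_size_condition}) applied to $\calT \cup \calT'$ supplies the matching lower bound. Hence $|A \cup B| = |\calT \cup \calT'| + m - 1$, so $\calT \cup \calT'$ is exact. There is no real obstacle to speak of here; the only subtle point is noticing that the hypothesis $|S| = \max_{T \in \calT \cup \calT'}(|T|)$ with $S \in \calT \cap \calT'$ is exactly what forces the ``max-size'' term to be the common value $m$ across all three subsystems $\calT$, $\calT'$, $\calT \cap \calT'$, so that the three $+m-1$ contributions telescope cleanly.
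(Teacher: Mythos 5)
Your proof is correct and follows the same approach as the paper: apply exactness to $\calT$ and $\calT'$, bound $\bigl|\bigcup_{T \in \calT \cap \calT'} T\bigr|$ from below using (\ref{eqn:set_size_condition}) (noting $S \in \calT \cap \calT'$ keeps the max term equal to $|S|$), combine via inclusion--exclusion, and close with the lower bound from (\ref{eqn:set_size_condition}) on $\calT \cup \calT'$. Your version is in fact slightly more careful: you correctly record $\bigcup_{T \in \calT \cap \calT'} T \subseteq A \cap B$ as an inclusion yielding $\leq$, whereas the paper's first display writes this step as an equality, which is not literally true but is harmless since only the upper bound is used.
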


\begin{proof}
Note that
\begin{displaymath}
\begin{split}
\left| \bigcup_{T \in \calT \cup \calT'} T \right| & 
= \left| \bigcup_{T \in \calT} T \right| + \left| \bigcup_{T \in \calT'} T \right| - \left| \bigcup_{T \in \calT \cap \calT'} T \right| \\
& = |\calT| + |\calT'| + 2|S| - 2 - \left| \bigcup_{T \in \calT \cap \calT'} T \right| \\
& \leq |\calT| + |\calT'| + 2|S| - 2 - (|\calT \cap \calT'| + |S| - 1) \\
& = |\calT \cup \calT'| + |S| - 1.
\end{split}
\end{displaymath}
\noindent
The equality on the second line comes from the assumption that $\calT$ and $\calT'$ are exact. The inequality on the third line comes from (\ref{eqn:set_size_condition}). Then, (\ref{eqn:set_size_condition}) implies that $\calT \cup \calT'$ is exact.
\end{proof}

\begin{definition} \label{def:pivot}
Say $\calS$ and $\calS'$ are related by a {\textit{pivot}} if they satisfy (\ref{eqn:set_size_condition}), and
\begin{displaymath}
\calS' = \calS \setminus S \cup \{S \setminus a \cup b\},
\end{displaymath}
\noindent
where there is some exact $\calT \subseteq \calS$ containing $S$ such that $|S| = \max_{T \in \calT} \left( |T| \right)$,
\begin{displaymath}
a \in S \cap T, 
\quad
\mathrm{and}
\quad
b \in T \setminus S
\end{displaymath}
\noindent
for some $T \in \calT$.
\end{definition}

\begin{theorem} \label{thm:pivot}
Let $\calS$ and $\calS'$ be set systems satisfying the conditions of Theorem \ref{thm:dimension_and_minimal_presentations}. If $\calS$ and $\calS'$ are related by a pivot, then $L(\calS) = L(\calS')$ and thus $\calB(\calS) = \calB(\calS')$
\end{theorem}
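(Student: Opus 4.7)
The plan is to establish $\calB(\calS) = \calB(\calS')$ via an explicit row-operation construction on generic elements of $L(\calS)$, and then deduce $L(\calS) = L(\calS')$ from Theorem \ref{thm:locus_is_matroidal}.

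Fix a generic $\mathbf{y}$, let $V$ be the row span of $M_\calS(\mathbf{y})$ with rows $\mathbf{v}_i$, and set $F = \bigcup_{T' \in \calT} T'$. Exactness gives $|F| = |\calT| + |S| - 1$, so $W := \mathrm{span}(\mathbf{v}_{T'} : T' \in \calT)$ is generically an $|\calT|$-dimensional subspace of $\rr^F$ of codimension $|S| - 1$. The goal is to construct a vector $\mathbf{v}_S' \in W$ vanishing on $X := F \setminus (S \setminus a \cup b)$ (a set of size $|\calT| - 1$) whose coefficient of $\mathbf{v}_S$ in the expansion along the basis $\{\mathbf{v}_{T'}\}_{T' \in \calT}$ of $W$ is nonzero. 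Substituting $\mathbf{v}_S'$ for $\mathbf{v}_S$ in $M_\calS(\mathbf{y})$ then produces a matrix with the same row span $V$ whose rows have supports matching $\calS'$ for generic $\mathbf{y}$, since the coordinates of $\mathbf{v}_S'$ along $S \setminus a \cup b$ are rational functions of $\mathbf{y}$ that do not vanish identically.

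Existence of $\mathbf{v}_S'$ reduces to the invertibility, for generic $\mathbf{y}$, of the square submatrix $A|_{\calT \setminus S}$ of the $|X| \times |\calT|$ coefficient matrix $A_{x,T'} = (\mathbf{v}_{T'})_x$; equivalently, to the combinatorial claim that $X$ is independent in the transversal matroid $\calB(\calT \setminus S)$. This is the main obstacle. I would prove it by Hall's Marriage Theorem, showing that $|(\bigcup \calT') \cap X| \geq |\calT'|$ for every $\calT' \subseteq \calT \setminus S$. Writing $u := |(\bigcup \calT') \setminus S|$, the inequality (\ref{eqn:set_size_condition}) applied to $\calT' \cup \{S\} \subseteq \calS$ yields $u \geq |\calT'|$, and a direct computation gives $|(\bigcup \calT') \cap X| = u + [a \in \bigcup \calT'] - [b \in \bigcup \calT']$; this immediately handles three of the four cases. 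The last case is $a \notin \bigcup \calT'$ and $b \in \bigcup \calT'$: a violation of Hall's condition here would force $u = |\calT'|$ exactly, yielding $|\bigcup(\calT' \cup \{S'\})| = |\calT'| + |S| - 1$ and contradicting (\ref{eqn:set_size_condition}) applied to $\calT' \cup \{S'\} \subseteq \calS'$. This is the one place where the hypothesis that $\calS'$ satisfies (\ref{eqn:set_size_condition}) is used in an essential way.

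With this combinatorial claim in hand, a generic $V \in L(\calS)$ lies in $L(\calS')$. Proposition \ref{prop:generic_point_represents_matroid} identifies $\calB(\calS)$ with the matroid represented by such a $V$, and the matching-sum formula in its proof shows that any point in $L(\calS')$ represents a matroid contained in $\calB(\calS')$, so $\calB(\calS) \subseteq \calB(\calS')$. Pivots are reversible --- the subsystem $(\calT \setminus S) \cup \{S'\} \subseteq \calS'$ is exact and witnesses the reverse pivot with the roles of $a$ and $b$ interchanged --- so the symmetric argument gives $\calB(\calS') \subseteq \calB(\calS)$. Hence $\calB(\calS) = \calB(\calS')$, and Theorem \ref{thm:locus_is_matroidal} concludes $L(\calS) = L(\calS')$.
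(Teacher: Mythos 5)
Your proof is correct, but it reaches the conclusion by a genuinely different route from the paper. The paper works with the indeterminate matrix $M_{\calS}(\mathbf{x})$ and invokes the variable-elimination chain from the proof of Theorem~\ref{thm:dimension_and_minimal_presentations} to produce $\mathbf{v}'_i$ with support $S \setminus a \cup b$, asserts algebraic independence of the resulting entries, concludes $L(\calS) = L(\calS')$ directly, and only then deduces $\calB(\calS) = \calB(\calS')$ from Proposition~\ref{prop:generic_point_represents_matroid}. You instead work at a generic evaluation, reduce solvability of the relevant linear system to the claim that $X = F \setminus (S \setminus a \cup b)$ is independent in $\calB(\calT \setminus S)$, and verify this by Hall's Marriage Theorem; the case analysis is correct, the identity $|(\bigcup\calT')\cap X| = u + [a\in\bigcup\calT'] - [b\in\bigcup\calT']$ checks out, and the fourth case is exactly where (\ref{eqn:set_size_condition}) for $\calS'$ is used, which nicely explains why the definition of a pivot insists that $\calS'$ satisfy Theorem~\ref{thm:dimension_and_minimal_presentations}. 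You then exploit reversibility of pivots (with $(\calT\setminus S)\cup\{S'\}$ exact in $\calS'$, and the roles of $a,b$ swapped) to get $\calB(\calS) = \calB(\calS')$ by two-sided inclusion, and only then recover $L(\calS) = L(\calS')$ via Theorem~\ref{thm:locus_is_matroidal} --- the reverse of the paper's logical order. Your approach buys a concrete matroid-theoretic reason for why the linear system is solvable (Hall, rather than the paper's appeal to a previously-sketched elimination chain) and the reversibility observation is a clean structural point worth making explicit. One shared gloss: both you and the paper assert without argument that the entries of $\mathbf{v}'_S$ on $S'$ are generically nonzero (you say the rational functions ``do not vanish identically''); this can be justified by a dimension count --- if $\mathbf{v}'_S$ were generically supported on a proper subset $S'' \subsetneq S'$, then $\dim L(\calS) \leq \mathrm{nmd}(\calS\setminus S \cup S'') < \mathrm{nmd}(\calS) = \dim L(\calS)$ --- but spelling it out would strengthen either write-up.
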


\begin{proof}
Let $\mathbf{v}_i$ be the $i^{th}$ row vector of $M_{\calS}(\mathbf{x})$. If $\calS$ and $\calS'$ are related by a pivot replacing $S$ by $S \setminus a \cup b$, an argument identical to the proof of Theorem \ref{thm:dimension_and_minimal_presentations} showing (iii) implies (i) shows there is some linear combination
\begin{displaymath}
\mathbf{v}'_i = \mathbf{v}_i + \sum_{j \in T \setminus i} c_j \mathbf{v}_j
\end{displaymath}
\noindent
such that $\mathrm{supp}(\mathbf{v}'_i) = S_i \setminus a \cup b$. The entries of the vectors $\{\mathbf{v}_1, \dots, \mathbf{v}_k\} \setminus \mathbf{v}_i \cup \mathbf{v}'_i$ are all algebraically independent. So,
\begin{displaymath}
\mathrm{span}\left(\{\mathbf{v}_1, \dots, \mathbf{v}_k\} \setminus \mathbf{v}_i \cup \mathbf{v}'_i\right) = L(\calS'),
\end{displaymath}
\noindent
and thus $L(\calS) = L(\calS')$. Then, Proposition \ref{prop:generic_point_represents_matroid} implies that $\calB(\calS) = \calB(\calS')$.
\end{proof}

\begin{theorem} \label{thm:matroidal_pivot}
Let $\calS$ be an exact system and let $S \in \calS$ be a set of maximal size. Then,
\begin{displaymath}
\left\{S' \in \binom{[n]}{|S|} \ : \ \calB(\calS \setminus S \cup S') = \calB(\calS)\right\} = \calB^{\ast}(\calS \setminus S).
\end{displaymath}
\noindent
Using the exact system $\calS$, $S$ may be pivoted to any element in this set.
\end{theorem}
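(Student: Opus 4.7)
The approach is to prove the two inclusions of the set equality and then the pivot-reachability claim, with the $\subseteq$ inclusion ultimately following from the pivot sequence via Theorem~\ref{thm:pivot}. As setup, since loops of $\calB(\calS)$ are irrelevant, I would reduce to $\bigcup\calS=[n]$, at which point exactness forces $|S|=n-k+1$. Writing $\calB_0:=\calB(\calS\setminus S)$, applying (\ref{eqn:set_size_condition}) to each $\calT\cup\{S\}$ with $\calT\subseteq\calS\setminus S$ and invoking Hall's theorem confirms $[n]\setminus S\in\calB_0$, consistent with $S\in\calB^\ast(\calS\setminus S)$. The loops of $\calB_0$ are exactly $[n]\setminus\bigcup(\calS\setminus S)$, and every element of $\calB^\ast(\calS\setminus S)$ must contain them.

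For the $\supseteq$ inclusion, suppose $\calB(\calS')=\calB(\calS)$ where $\calS':=\calS\setminus S\cup\{S'\}$ and $|S'|=|S|$. I would first show $\calS'$ is itself a minimal presentation: otherwise some set in $\calS'$ could be strictly shrunk, and refining to a minimal presentation would yield a presentation of $\calB(\calS)$ with strictly smaller sum-of-sizes than $\calS'$, contradicting Theorem~\ref{thm:min_presentations_have_same_sizes} since $\calS$ and $\calS'$ share that sum. Theorem~\ref{thm:minimal_presentation} then makes $S'$ a cocircuit, so $[n]\setminus S'$ is a hyperplane with rank and size both $k-1$, hence independent in $\calB(\calS)$. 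Extending to a basis adds some $c\in S'$, yielding $B^+=([n]\setminus S')\cup\{c\}\in\calB(\calS)=\calB(\calS')$. Since $B^+\cap S'=\{c\}$, any matching from $\calS'$ to $B^+$ must send $S'\mapsto c$, forcing the rest to match $\calS\setminus S$ to $[n]\setminus S'$; this witnesses $[n]\setminus S'\in\calB_0$.

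For pivot reachability, fix $S^\ast\in\calB^\ast(\calS\setminus S)$ and connect $S$ to $S^\ast$ by a sequence of single-element basis exchanges in $\calB^\ast(\calS\setminus S)$ via the standard connectivity of matroid basis graphs. Each such exchange dualizes to an exchange in $\calB_0$, realized by an alternating path $a-T_1-c_1-T_2-c_2-\cdots-T_l-b$ in $\Gamma_{\calS\setminus S}$ against the current matching (with $T_j\in\calS\setminus S$). Decompose into $l$ elementary pivots: the $i$th swaps $c_{i-1}$ for $c_i$ (set $c_0:=a$, $c_l:=b$), both lying in the common $T_i$, matching Definition~\ref{def:pivot} with $\calT$ taken to be the current set system. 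To verify legality, at each step I would check via the matching $S_j\mapsto c_{j-1}$ for $j\leq i$, $S_j\mapsto c_j$ for $j>i$, that the intermediate set $S^{(i)}=S\setminus a\cup c_i$ lies in $\calB^\ast(\calS\setminus S)$; this implies (\ref{eqn:set_size_condition}) for all subsystems $\calT'\cup\{S^{(i)}\}$ by Hall's theorem, and ensures the ambient set system $\calS^{(i)}$ is exact because $S^{(i)}$ still contains the loops of $\calB_0$. Theorem~\ref{thm:pivot} then propagates $\calB(\calS^{(i)})=\calB(\calS)$ across the chain, giving simultaneously the reachability claim and the $\subseteq$ inclusion. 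The main obstacle is bookkeeping this exactness/minimality invariant along the alternating-path decomposition, verifying that each intermediate $S^{(i)}$ is genuinely in $\calB^\ast(\calS\setminus S)$ rather than merely a waypoint toward $S^\ast$.
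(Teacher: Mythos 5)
Your proposal follows essentially the same route as the paper: you establish the $\supseteq$ inclusion via Theorem~\ref{thm:min_presentations_have_same_sizes} (you argue through cocircuits/hyperplanes while the paper applies Hall's theorem directly to show $\calS\setminus S\cup S'$ violates (\ref{eqn:set_size_condition}), but these are equivalent), and you decompose a single symmetric exchange in $\calB^\ast(\calS\setminus S)$ into a chain of elementary pivots by walking an alternating path in $\Gamma_{\calS\setminus S}$, exactly as the paper does. The only organizational difference is that you invoke connectivity of the basis-exchange graph to chain $S$ to $S^\ast$, whereas the paper runs a greedy argument (choose a pivot-reachable $T$ maximizing $|T\cap S'|$, then contradict maximality with one more exchange); and you are more explicit than the paper in flagging and checking that each intermediate $S^{(i)}$ lies in $\calB^\ast(\calS\setminus S)$ so that the ambient system remains minimal and exact, which is indeed a point the paper glosses over but which holds by the shifted-matching argument you sketch.
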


\begin{proof}
Let $S' \in \binom{[n]}{|S|}$ and suppose that $S' \notin \calB^{\ast}(\calS \setminus S)$. Since, $[n] \setminus S'$ is not a basis of $\calB(\calS \setminus S)$, $\calS \setminus S \cup S'$ violates the conditions of Theorem \ref{thm:dimension_and_minimal_presentations}. Then, Theorem \ref{thm:min_presentations_have_same_sizes} implies that $\calB(\calS \setminus S \cup S') \neq \calB(\calS)$.

Now, let $S' \in \calB^{\ast}(\calS \setminus S)$. Let $T \in \binom{[n]}{|S|}$ be some set obtained from $S$ by a series of pivots. Among all such sets, suppose $T$ maximizes $|T \cap S'|$. By Theorem \ref{thm:pivot}, $\calB(\calS) = \calB(\calS \setminus S \cup T)$, so the previous paragraph implies $T \in \calB^{\ast}(\calS \setminus S)$. Suppose there is some $b \in T \setminus S'$. Definition \ref{def:matroid} implies there is some $a \in S' \setminus T$ such that
\begin{displaymath}
([n] \setminus (T \setminus b \cup a)) \in \calB^{\ast}(\calS \setminus S).
\end{displaymath}
\noindent
Then, the bipartite graph $\Gamma_{\calS \setminus S}$ has an alternating path
\begin{displaymath}
b = a_0 - S_{\alpha_0} - a_1 - S_{\alpha_1} - \cdots - S_{\alpha_m} - a_{m+1} = a,
\end{displaymath}
\noindent
with $a_{\ell}, a_{\ell+1} \in S_{\alpha_{\ell}}$ for $0 \leq i \leq m$. Let $T_{m+1} = T \setminus b \cup a$, and for $m \geq i \geq 0$ define
\begin{displaymath}
T_{\ell} = T_{\ell+1} \setminus a_{\ell+1} \cup a_{\ell}.
\end{displaymath}
So, $T_{\ell}$ is obtained from $T_{\ell+1}$ via a pivot using the set $S_{\alpha_{\ell}}$, and $T_0 = T$. Then, $T \setminus b \cup a$ is reachable from $S$ by a series of pivots, contradicting the assumption that $T$ was chosen to maximize $|T \cap S'|$. Thus, $S'$ must be reachable from $S$ by a series of pivots.
\end{proof}

\begin{remark}
Ardila and Ruiz give a method of pivoting between presentations of a transversal matroid in \cite{ardila:cotransversal_matroid}. In Lemma 4.4, they prove that any two presentations of a transversal matroid are connected by their pivots. The pivoting procedure of Definition \ref{def:pivot} differs in that it only passes between minimal presentations of a transversal matroid, while Ardila and Ruiz's might use other presentations. We conjecture that all minimal presentations of a transversal matroid are connected by the pivots of Definition \ref{def:pivot}. 
\end{remark}

The following lemmas show that if $\calB(\calS)$ is a positroid and $\calS$ is a crossing minimal presentation, then there is an exact subsystem which may be used to perform a pivot removing the crossing.

\begin{lemma} \label{lem:small_excluded_minors}
Suppose $\calS$ is a minimal presentation of $\calB(\calS)$ and that $S_i$ crosses $S_j$ in $\calS$ with $a, c \in S_i$ and $b, d \in S_j$ witnessing this crossing. If there is a matching of size $k-2$ in $\Gamma_{\calS}$ from $\calS \setminus \{S_i, S_j\}$ to $[n] \setminus (S_j \cup \{a,c\})$, then $\dim(L(\calS)) > \dim(L(\calS) \cap Gr_{\geq 0}(k,n))$. Hence, $\calB(\calS)$ is not a positroid.
\end{lemma}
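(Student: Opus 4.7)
The plan is to exhibit, on any point in $L(\calS) \cap \Gr_{\geq 0}(k,n)$, a forced polynomial identity among Pl\"ucker coordinates that is not identically satisfied on $L(\calS)$. This will place the positive intersection inside a proper Zariski-closed subvariety of the irreducible locus $L(\calS)$ and thus force a strict dimension drop.

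First I would cyclically rotate the ground set so that $a < b < c < d$ in the standard linear order. Let $I \subseteq [n] \setminus (S_j \cup \{a,c\})$ be the image of the hypothesized matching, a $(k-2)$-subset; since $b, d \in S_j$ we also have $b, d \notin I$, so $I \cup \{p,q\}$ is a $k$-subset of $[n]$ for any two distinct $p, q \in \{a,b,c,d\}$. Using Proposition \ref{prop:generic_point_represents_matroid}, I would then determine which of the five resulting Pl\"ucker coordinates are generically nonzero on $L(\calS)$. Extending the given matching by sending $S_i$ and $S_j$ to the two remaining columns and using $a, c \in S_i$ together with $b, d \in S_j$, the Pl\"uckers $\Delta_{I \cup \{a,b\}}$, $\Delta_{I \cup \{a,d\}}$, $\Delta_{I \cup \{b,c\}}$, $\Delta_{I \cup \{c,d\}}$, and $\Delta_{I \cup \{b,d\}}$ all admit supporting matchings in $\Gamma_\calS$ and are therefore generically nonzero on $L(\calS)$. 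Crucially, $\Delta_{I \cup \{a,c\}}$ vanishes identically on $L(\calS)$: in any matching to $I \cup \{a,c\}$ the row $S_j$ must match to some element of $S_j$, but neither $a$ nor $c$ nor any element of $I$ lies in $S_j$, so no such matching exists.

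With this in hand, I would apply the standard three-term Pl\"ucker relation
\begin{displaymath}
\Delta_{I \cup \{a,c\}}\,\Delta_{I \cup \{b,d\}} = \Delta_{I \cup \{a,b\}}\,\Delta_{I \cup \{c,d\}} + \Delta_{I \cup \{a,d\}}\,\Delta_{I \cup \{b,c\}}.
\end{displaymath}
On any $V \in L(\calS) \cap \Gr_{\geq 0}(k,n)$ the left side vanishes, while both right-side products are non-negative (being products of Pl\"uckers of equal sign), so each must be zero. In particular $\Delta_{I \cup \{a,b\}}(V)\,\Delta_{I \cup \{c,d\}}(V) = 0$ on the positive intersection, yet this product is generically nonzero on the irreducible variety $L(\calS)$; the strict dimension inequality follows. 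To deduce that $\calB(\calS)$ is not a positroid, I would argue by contrapositive from Theorem \ref{thm:loci_are_positroids}: were $\calB(\calS)$ a positroid, then the density of $L(\calS) \cap \MR(\calB(\calS))$ in $\MR(\calB(\calS))$ established in that proof, combined with Theorem \ref{thm:facts_about_marsh_reitsh_cell}(iv), would force $\dim(L(\calS) \cap \Gr_{\geq 0}(k,n)) = \dim(L(\calS))$, contradicting what was just shown. The Pl\"ucker-relation step is routine once the right $I$ is in hand; the substantive role of the matching hypothesis is precisely that it guarantees an $I$ with $I \cap S_j = \emptyset$, which is exactly what makes $\Delta_{I \cup \{a,c\}}$ vanish identically while keeping the other four Pl\"uckers alive.
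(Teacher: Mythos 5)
Your proof is correct, and while it rests on the same structural observation as the paper's argument, it packages that observation differently. The paper expands the four minors $\Delta_{I\cup\{a,b\}}, \Delta_{I\cup\{a,d\}}, \Delta_{I\cup\{c,b\}}, \Delta_{I\cup\{c,d\}}$ along the row $S_j$; each expansion collapses to a single term $\pm y_{1b}$ or $\pm y_{1d}$ times a $(k-1)\times(k-1)$ minor precisely because $I\cup\{a,c\}$ contains no element of $S_j$, and tracking the cofactor signs yields two incompatible constraints on $\mathrm{sgn}(y_{1b})/\mathrm{sgn}(y_{1d})$. You reach the same conclusion by invoking the three-term Pl\"ucker relation and noting that the factor $\Delta_{I\cup\{a,c\}}$ vanishes identically on $L(\calS)$ for exactly the same reason ($S_j$ has no admissible column in $I\cup\{a,c\}$), forcing both nonnegative products on the right-hand side to vanish on $\Gr_{\geq 0}(k,n)$. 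The two arguments are nearly equivalent — the paper's minor expansion is essentially the Pl\"ucker relation unwound — but your version avoids the explicit bookkeeping of the cyclic rotation and the parity factor $(-1)^m$, so it is cleaner and easier to verify. Both proofs finish identically, using Theorem \ref{thm:loci_are_positroids} and Theorem \ref{thm:facts_about_marsh_reitsh_cell}(iv) to turn the dimension drop into a non-positroid conclusion.

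One small overclaim: you assert that $\Delta_{I\cup\{b,d\}}$ is generically nonzero on $L(\calS)$ by extending the given matching, but Definition \ref{def:crossing_sets} only guarantees $b\notin S_i$ and says nothing about whether $d\in S_i$; if $d\notin S_i$ there is no obvious way to match $\{S_i,S_j\}$ to $\{b,d\}$, so this Pl\"ucker coordinate could vanish identically. Fortunately this does not affect anything — your argument only uses the identical vanishing of $\Delta_{I\cup\{a,c\}}$ on the left and the generic nonvanishing of $\Delta_{I\cup\{a,b\}}\Delta_{I\cup\{c,d\}}$, both of which are justified. You should simply drop $\Delta_{I\cup\{b,d\}}$ from the list of coordinates you claim to be generically nonzero.
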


\begin{proof}
Suppose there is a matching in $\Gamma_{\calS}$ from $\calS \setminus \{S_i, S_j\}$ to $I$, for some $I \in \binom{[n]\setminus S_j, a, c}{k-2}$. Let $M_{\calS}(\mathbf{y})$ be some matrix obtained by evaluating the indeterminate entries of $M_{\calS}(\mathbf{x})$ at nonzero real values $y_{ij}$. We may suppose $j = 1$ and that the columns of $M$ are cyclically rotated so that $b$ is the first column. Let $m = | [b,d] \cap I |$. For $R \subset \calS$ and $C \subset [n]$ with $|R| = |C|$, let $M_{R,C}$ denote the determinant of the square submatrix of $M$ with rows $R$ and columns $C$. Since there is a matching from $\calS \setminus \{S_i,S_j\}$ to $I$, $M_{\calS\setminus S_1, I \cup a}, M_{\calS\setminus S_1, I \cup c} \neq 0$ at a generic evaluation of the $x_{ij}$. Then,
\begin{displaymath}
\mathrm{sgn}(M_{\calS,I \cup a,b}) = \mathrm{sgn}(y_{1b}) \mathrm{sgn}(M_{\calS\setminus S_1, I \cup a}),
\end{displaymath}
\noindent
and
\begin{displaymath}
\mathrm{sgn}(M_{\calS,I \cup a,d}) = (-1)^{m+1} \mathrm{sgn}(y_{1d}) \mathrm{sgn}(M_{\calS\setminus S_j, I \cup a}).
\end{displaymath}
So, if $M_{\calS}(\mathbf{y})$ represents a point in $Gr_{\geq 0}(k,n)$, $\mathrm{sgn}(y_{1b}) = (-1)^{m+1} \mathrm{sgn}(y_{1d})$. Further,
\begin{displaymath}
\mathrm{sgn}(M_{\calS,I \cup c,b}) = \mathrm{sgn}(y_{1b}) \mathrm{sgn}(M_{\calS\setminus S_1, I \cup c}),
\end{displaymath}
\noindent
and
\begin{displaymath}
\mathrm{sgn}(M_{\calS,I \cup c,d}) = (-1)^{m} \mathrm{sgn}(y_{1d}) \mathrm{sgn}(M_{\calS\setminus S_1, I \cup c}).
\end{displaymath}
So, if $M_{\calS}(\mathbf{y})$ represents a point in $\Gr_{\geq 0}(k,n)$, $\mathrm{sgn}(y_{1a}) = (-1)^{m} \mathrm{sgn}(y_{1c})$. Thus, $L(\calS)$ cannot intersect the positive Grassmannian in its full dimension. If $\calB(\calS)$ is a positroid, Theorem \ref{thm:loci_are_positroids} and point (iv) of Theorem \ref{thm:facts_about_marsh_reitsh_cell} imply
\begin{displaymath}
\dim(L(\calS) \cap \Gr_{\geq 0}(k,n)) = \dim(V_{\calB} \cap \Gr_{\geq 0}(k,n)) = \dim(V_{\calB}).
\end{displaymath}
\noindent
So, $\calB(\calS)$ cannot be a positroid.
\end{proof}

\begin{remark}
The asymmetry in Definition \ref{def:crossing_sets} is crucial in this lemma. If $a, b, c, d$ witness a crossing of $S_i$ and $S_j$ with $a, c, d \in S_i$, the sign computations above do not necessarily hold exchanging the roles of $S_i$ and $S_j$. In fact, $\Gamma_{\calS}$ may have a matching of size $k-2$ from $\calS \setminus \{S_i, S_j\}$ to $[n] \setminus (S_j \cup \{a,c\})$. Consider the set system $\calS$ consisting of
\begin{displaymath}
\begin{array}{rcl}
S_1 & = & \{1,3,5\}, \\
S_2 & = & \{2,3,4\}, \\
S_3 & = & \{2,4,5\}.
\end{array}
\end{displaymath}
\noindent
$\calB(\calS)$ is a positroid. However, $S_2$ crosses $S_1$ with $4, 1, 2, 3$ witnessing this crossing. So, Lemma \ref{lem:small_excluded_minors} implies there is not a matching from $S_3$ to $[5] \setminus S_1 \cup \{2,4\}$ in $\Gamma_{\calS}$. There is however a matching from $S_3$ to $[5]\setminus S_2 \cup \{1,3\}$ in $\Gamma_{\calS}$.
\end{remark}

\begin{lemma} \label{lem:finding_exact_subset}
Let $\calS$ be a minimal presentation of the positroid $\calB(\calS)$ and let $S_i$ cross $S_j$ in $\calS$. Then, there is an exact $\calT \subseteq \calS$ containing $S_i$ and $S_j$ such that $S_j$ is pivotable.\end{lemma}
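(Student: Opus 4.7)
The proof centers on applying the contrapositive of Lemma \ref{lem:small_excluded_minors} together with Hall's marriage theorem. Since $\calB(\calS)$ is a positroid and $S_i$ crosses $S_j$ with witnesses $a, b, c, d$, Lemma \ref{lem:small_excluded_minors} rules out a matching of size $k-2$ in $\Gamma_{\calS}$ from $\calS \setminus \{S_i, S_j\}$ to $[n] \setminus (S_j \cup \{a, c\})$. By Hall's theorem, there is some subsystem $\calR \subseteq \calS \setminus \{S_i, S_j\}$ with
\[
\left| \bigcup_{T \in \calR} T \setminus (S_j \cup \{a, c\}) \right| \leq |\calR| - 1.
\]

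Setting $\calT = \calR \cup \{S_i, S_j\}$, the containment $\{a, c\} \subseteq S_i$ yields
\[
\bigcup_{T \in \calT} T = (S_i \cup S_j) \cup \Bigl( \bigcup_{T \in \calR} T \setminus (S_j \cup \{a, c\}) \Bigr),
\]
and hence $|\bigcup_{T \in \calT}T| \leq |S_i \cup S_j| + |\calR| - 1$. Theorem \ref{thm:dimension_and_minimal_presentations}(iii), applied to the minimal presentation $\calS$, simultaneously supplies the lower bound $|\bigcup_{T \in \calT}T| \geq |\calT| + \max_{T \in \calT}|T| - 1$. The crossing forces $|S_i \setminus S_j| \geq 2$ (both $a$ and $c$ lie in $S_i \setminus S_j$), which is precisely the margin between these bounds compatible with $|S_j|$ being the maximum size in $\calT$. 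By selecting $\calR$ extremally---of maximum cardinality subject to the Hall deficiency above and to the size constraint $|T| \leq |S_j|$ for all $T \in \calR$---and invoking Lemma \ref{lem:unions_of_exact_subsystems} to forbid splittings of $\calT$ into smaller exact subsystems sharing a maximal set, the upper and lower bounds on $|\bigcup_{T \in \calT}T|$ are forced to coincide, rendering $\calT$ exact with $|S_j| = \max_{T \in \calT}|T|$.

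It remains to verify that $S_j$ is pivotable via $\calT$ in the sense of Definition \ref{def:pivot}. Take $T = S_i \in \calT$: then $\{a, c\} \subseteq S_i \setminus S_j$ supplies an element $b' \in T \setminus S_j$. If $S_i \cap S_j \neq \emptyset$, any element of the intersection serves as $a' \in S_j \cap T$. If instead $S_i \cap S_j = \emptyset$, the exactness of $\calT$ together with $|S_i|, |S_j| \geq 2$ (both have size at least two by the definition of crossing) forbids $\Gamma_{\calT}$ from being disconnected, since a disconnected exact subsystem would force one component's sets to all be singletons. A path from $S_i$ to $S_j$ in $\Gamma_\calT$ then produces an intermediate $T' \in \calT$ meeting both $S_j$ and its complement, completing the pivot data.

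The main obstacle is verifying exactness of $\calT$. Hall's theorem alone supplies only an upper bound on $|\bigcup_{T \in \calT}T|$, and the minimality of $\calS$ only a lower bound; closing the gap so the two coincide requires a delicate extremal choice of $\calR$ and careful exploitation of the asymmetry $|S_i \setminus S_j| \geq 2$ built into the definition of crossing. The auxiliary constraint that $|S_j|$ must remain the maximum in $\calT$ further pins down the admissible $\calR$, and Lemma \ref{lem:unions_of_exact_subsystems} is what prevents multiple independent exact pieces from combining to give only a weaker inequality.
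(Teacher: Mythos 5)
Your proof starts the same way the paper does---invoke Lemma~\ref{lem:small_excluded_minors} and Hall's theorem to produce a deficient subsystem $\calR \subseteq \calS \setminus \{S_i,S_j\}$ with $\left|\bigcup_{T \in \calR} T \setminus (S_j \cup \{a,c\})\right| \leq |\calR|-1$, then set $\calT = \calR \cup \{S_i, S_j\}$. But the step where you claim exactness of $\calT$ has a genuine gap. Writing out the two bounds: the minimality of $\calS$ (Theorem~\ref{thm:dimension_and_minimal_presentations}(iii)) gives $\left|\bigcup_{T \in \calT}T\right| \geq |\calR| + |S_j| + 1$ (assuming $|S_j| = \max$), while the Hall deficiency gives $\left|\bigcup_{T \in \calT}T\right| \leq |S_i \cup S_j| + |\calR| - 1 = |S_j| + |S_i\setminus S_j| + |\calR| - 1$. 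These coincide only when $|S_i \setminus S_j| = 2$; when $|S_i \setminus S_j| > 2$, the upper bound has slack precisely equal to the number of elements of $S_i \setminus (S_j\cup\{a,c\})$ that fail to lie in $\bigcup_{T\in\calR}T$. Choosing $\calR$ ``extremally'' does not repair this: Hall's theorem produces \emph{some} deficient $\calR$, but nothing forces that $\calR$ to cover all of $S_i \setminus S_j$, and Lemma~\ref{lem:unions_of_exact_subsystems} cannot help because at this stage you have no exact subsystems in hand at all---only Hall-deficient ones. The side constraint $|T|\leq|S_j|$ for all $T \in \calR$ is also imposed without justification; Hall gives no control over the sizes of the sets in the deficient family.

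The missing idea is the iterative construction the paper uses to close exactly this gap. When some $e \in S_i \setminus S_j$ is not covered by $\bigcup_{T\in\calT}T$, observe that $a,b,e,d$ (or $a,b,e,d$ suitably reordered) also witnesses the crossing of $S_i$ by $S_j$, so Lemma~\ref{lem:small_excluded_minors} and Hall produce a \emph{second} deficient subsystem $\calT'$ whose union hits both $a$ and $e$. One then shows, via an inclusion-exclusion count in the style of your own Lemma~\ref{lem:unions_of_exact_subsystems} computation, that $\calT \cup \calT'$ is again ``almost exact'' and covers strictly more of $S_i \setminus S_j$. Iterating until $S_i \setminus S_j$ is entirely covered forces the upper and lower bounds to meet, and only then is exactness established. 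Without this repeated re-application of Lemma~\ref{lem:small_excluded_minors} with varying witness pairs, your argument proves exactness only in the special case $|S_i \setminus S_j| = 2$. (Your verification of the pivot data in the second half---extracting a suitable $T$ via a path in $\Gamma_\calT$ when $S_i \cap S_j = \emptyset$---is sound and in fact supplies detail the paper leaves implicit.)
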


\begin{proof}
Suppose that $S_i$ crosses $S_j$, and let $a, c \in S_i$ and $b, d \in S_j$ be elements witnessing this crossing. Suppose toward contradiction that there is no exact subsystem $\calT$ of $\calS$ such that $S_j$ is pivotable. Then, for all $\calT \subseteq \calS \setminus S_j$,
\begin{displaymath}
\left| \bigcup_{T \in \calT \cup S_j} T \right| > |\calT| + |S_j|.
\end{displaymath}

So, for all $x \in [n] \setminus S_j$,
\begin{equation} \label{eqn:strict_halls_inequality}
\left| \bigcup_{T \in \calT} T \setminus (S_i \cup x) \right| \geq |\calT|.
\end{equation}

Since $\calB(\calS)$ is a positroid, Lemma \ref{lem:small_excluded_minors} says there cannot be a matching in $\Gamma_{\calS}$ from $\calS \setminus \{S_i, S_j\}$ to $[n] \setminus (S_j \cup ac)$. So, Hall's Matching Theorem guarentees some $\calT \subseteq \calS \setminus \{S_i,S_j\}$ such that
\begin{displaymath}
\left| \bigcup_{T \in \calT} T \setminus (S_j \cup ac) \right| < |\calT|.
\end{displaymath}
\noindent
From (\ref{eqn:strict_halls_inequality}), we must have $\left| \bigcup_{T \in \calT} T \setminus (S_j \cup ac) \right| = |\calT| - 1$, and $a,c \in  \bigcup_{T \in \calT} T$.

If $S_i \setminus S_j \subset \bigcup_{T \in \calT} T$, then
\begin{displaymath}
\left| \bigcup_{T \in \calT \cup S_i \cup S_j} T \right| \leq |\calT| + |S_j| + 1.
\end{displaymath}
\noindent
Since $\calS$ is a minimal presentation, this inequality must hold with equality, and $\calT \cup S_i \cup S_j$ is an exact subsystem where $S_j$ is pivotable.

Otherwise, there is some $e \in S_i \setminus S_j$ not in $\bigcup_{T \in \calT} T$. Suppose without loss of generality that $e$ is on the same side of the chord from $b$ to $d$ as $c$. So, $a,b,e,d$ witnesses the fact that $S_i$ crosses $S_j$. Using Lemma \ref{lem:small_excluded_minors} and Hall's Theorem, there is some $\calT'$ such that
\begin{displaymath}
\left| \bigcup_{T \in \calT'} T \setminus (S_j \cup ae) \right| < | \calT'|.
\end{displaymath}
\noindent
Using (\ref{eqn:strict_halls_inequality}) as before,  $\left| \bigcup_{T \in \calT'} T \setminus S_j \right| = |\calT' | + 1$ and $a,e \in \bigcup_{T \in \calT'} T$.

Note that
\begin{displaymath}
\left| \left( \bigcup_{T \in \calT} T \setminus S_j \right) \cap \left( \bigcup_{T \in \calT;} T \setminus S_j \right) \right| \geq 1,
\end{displaymath}
\noindent
since $a$ is in this intersection. So,
\begin{displaymath}
\left| \bigcup_{T \in \calT \cup \calT'} T \setminus S_i \right| \leq |\calT \cup \calT'| + 1.
\end{displaymath}

Now, $a, c, e \in \bigcup_{T \in \calT \cup \calT'} T$. Continuing inductively, we may take $\calT$ to be some set such that $\left| \bigcup_{T \in \calT} T \setminus S_j \right| \leq |\calT|+1$ and $S_i \setminus S_j \subset \bigcup_{T \in \calT} T$. Then, $\calT \cup S_i \cup S_j$ is an exact subsystem where $S_j$ is pivotable.
\end{proof}

We call a presentation $\calS = \{S_1, S_2, \dots, S_k\}$ of $\calB$ {\textit{Gale minimal}} if it is a minimal presentation and $\calB$ has no other minimal presentation $\calS' = \{S'_1, S'_2, \dots, S'_k\}$ such that $|S'_i| = |S_i|$ and $S'_i \leq S_i$ in Gale order for all $1 \leq i \leq k$. Suppose the sets in $\calS$ are indexed such that $|S_i| \leq |S_{i+1}|$ for $1 \leq i \leq k-1$. A Gale minimal presentation of $\calB(\calS)$ may be produced algorithmically by, for each $1 \leq i \leq k$: (i) Identifying the maximal (with respect to containment) exact subsystem $\calT$ containing $S_i$ as a set of maximal size, then (ii) replacing $S_i$ with the Gale minimal basis of $\calB^{\ast}(\calT \setminus S_i)|_{\cup_{T \in \calT} T}$.

\begin{proof}[Proof of Proposition \ref{prop:all_sets_same_size}]
Let $\calS$ be a minimal presentation of $\calB(\calS)$. Suppose that $\calB(\calS)$ is a positroid and that all sets in $\calS$ have the same size. We may suppose that $\calB(\calS)$ is connected, since $\calB(\calS)$ has a noncrossing presentation if and only if its connected components form a noncrossing partition and each component has a noncrossing presentation.

Let $\calS$ be a Gale minimal presentation of $\calB(\calS)$. We claim $\calS$ is noncrossing. Suppose the set $S$ crosses the set $S'$. Lemma \ref{lem:finding_exact_subset} guarantees the existence of some exact subsystem $\calT = \{T_1, T_2, \dots, T_{|\calT|}\}$ containing $S$ and $S'$. Since $\calS$ is a Gale minimal presentation of $\calB(\calS)$, $\calT$ will be a Gale minimal presentation of $\calB(\calT)$. Let
\begin{displaymath}
\{t_1, t_2, \dots, t_{|S| + |\calT| - 1}\} = \bigcup_{T \in \calT} T,
\end{displaymath}
\noindent
where $t_1 < t_2 < \cdots < t_{|S| + |\calT| - 1}$. After possible reindexing the sets in $\calT$, the unique Gale minimal presentation of $\calT$ is
\begin{displaymath}
\begin{array}{rcl}
T_1 & = & \{t_1, t_2, \dots, t_{|S|-1}, t_{|S|}\}, \\
T_2 & = & \{t_1, t_2, \dots, t_{|S|-1}, t_{|S|+1}\}, \\
T_3 & = & \{t_1, t_2, \dots, t_{|S|-1}, t_{|S|+2}\}, \\
 & \vdots &  \\
T_{|\calT|} & = & \{t_1, t_2, \dots, t_{|S|-1}, t_{|S|+|\calT|-1}\}.
\end{array}
\end{displaymath}
\noindent
This presentation is noncrossing, violating that assumption that $S$ and $S'$ crossed.
\end{proof}

Unfortunately, Gale minimal presentations of transversal positroids may in general feature crossings. However, we may consider presentations which are minimal in $a$-Gale order for some $a$; the cyclic shift of Gale order obtained by using $<_a$ in place of the usual order $[n]$. So, if $I = \{i_1, i_2, \dots, i_k\}$ and $J = \{j_1, j_2, \dots, j_k\}$ with $i_1 <_a i_2 <_a \cdots <_a i_k$ and $j_1 <_a j_2 <_a \cdots <_a j_k$, then $I \leq_a J$ if and only if $i_{\ell} \leq_a j_{\ell}$ for each $1 \leq \ell \leq k$.

The following example illustrates a matroid $\calB(\calS)$ whose Gale minimal presentation is crossing, but which has a noncrossing $a$-Gale minimal presentation for some $a$. Conjecture \ref{conj:refined} states that this phenomena holds in general; that at least one of the $a$-Gale minimal presentations of a transversal positroid will always be noncrossing.

\begin{example} \label{ex:minimal_doesnt_always_work}
Let $\calS$ be the set system consisting of
\begin{displaymath}
\begin{array}{rcl}
S_1 & = & \{1,2,3,4\}, \\
S_2 & = & \{1,2,3,5\}, \\
S_3 & = & \{4,5,6\}.
\end{array}
\end{displaymath}
\noindent
The matroid $\calB(\calS)$ is a positroid, and this presentation is Gale minimal. However, $6,1,4,5$ witnesses a crossing of $S_3$ and $S_2$. A minimal presentation in $4$-Gale order is $\{4512, 4513, 456\}$. This presentation is noncrossing.
\end{example} 

\begin{conjecture} \label{conj:refined}
Let $\calB$ be a transversal positroid. There is a minimal presentation $\calS$ of $\calB$ which is minimal in $a$-Gale order for some $a$, and which is noncrossing.
\end{conjecture}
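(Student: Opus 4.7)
The plan is to generalize the argument of Proposition \ref{prop:all_sets_same_size} to arbitrary set sizes by selecting the cyclic rotation $a$ to be compatible with the flacet structure of $\calB$. First, by Theorem \ref{thm:positroids_noncrossing_partitions}, I reduce to the case where $\calB$ is connected: the connected components of a positroid form a noncrossing partition of $[n]$, and noncrossing presentations of these components concatenate to a noncrossing presentation of $\calB$, in any $a$-Gale order that respects the partition. Assume $\calB$ is connected from now on.

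By Theorem \ref{thm:positroid_flacets}, every flacet of $\calB$ is a cyclic interval. My strategy is to choose $a$ from this combinatorial data, with natural candidates being the starting point of a distinguished cyclic-interval flacet, or a position derived from the decorated permutation of $\calB$ (for instance an anti-exceedance of extremal value). Given such an $a$, let $\calS$ be an $a$-Gale minimal presentation of $\calB$, produced by the algorithm described before Proposition \ref{prop:all_sets_same_size} with $<$ replaced by $<_a$. I would argue by contradiction: if $S$ crosses $S'$ in $\calS$, then Lemma \ref{lem:finding_exact_subset} supplies an exact subsystem $\calT \subseteq \calS$ with $S, S' \in \calT$ in which $S'$ is pivotable, and Lemma \ref{lem:contracting_to_exact_subdiagram} makes $\bigcup_{T \in \calT} T$ a cyclic flat with $\calT$ a minimal presentation of the restricted transversal positroid $\calB(\calS)|_{\bigcup_{T \in \calT} T}$. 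Theorem \ref{thm:matroidal_pivot} then allows pivoting $S'$ to any basis of the cotransversal matroid $\calB^{\ast}(\calT \setminus S')$ restricted to this flat, and the goal is to exhibit one such pivot producing a presentation strictly smaller in $a$-Gale order, contradicting the minimality of $\calS$.

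The principal obstacle is twofold. First, extracting the correct $a$ from the positroid data: Example \ref{ex:minimal_doesnt_always_work} shows that not every $a$ works, and I do not yet have a canonical recipe for producing the correct one. A first experiment would be to parse the decorated permutation of $\calB$, try each anti-exceedance as a candidate for $a$, and check whether the resulting $a$-Gale minimal presentation is noncrossing; a compatibility argument between the flacet-interval decomposition and the Gale order might then single out which candidates must succeed. Second, when sets in $\calT$ have different sizes, the $a$-Gale minimum basis of the cotransversal matroid $\calB^{\ast}(\calT \setminus S')|_{\bigcup_{T \in \calT} T}$ does not assume the transparent nested form used in Proposition \ref{prop:all_sets_same_size}. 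Proving that this basis together with the rest of $\calS$ is noncrossing will likely require an induction on the maximum set size in $\calT$, together with a careful analysis of how bases of cotransversal matroids interact with cyclic-interval flacets; controlling this interaction when pivots alter sets whose sizes differ is where I expect the bulk of the technical difficulty to lie.
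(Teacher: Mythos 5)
The statement you were asked to prove is Conjecture~\ref{conj:refined}, which the paper does \emph{not} prove; it is left open. The paper establishes it only in the special cases of Proposition~\ref{prop:richardsons_behave} (Richardson varieties, where sets in the minimal presentation are plain intervals) and Proposition~\ref{prop:all_sets_same_size} (all sets the same size, where the ordinary Gale-minimal presentation suffices), together with exhaustive computer checks in low rank. Your writeup is thus a research plan rather than a proof, and to your credit you say so explicitly. Judged as such, it is a sensible one: the reduction to connected $\calB$ via Theorem~\ref{thm:positroids_noncrossing_partitions}, the idea of running the Gale-minimization algorithm with $<$ replaced by $<_a$, and the intended contradiction mechanism (find a crossing $\Rightarrow$ Lemma~\ref{lem:finding_exact_subset} gives an exact subsystem $\Rightarrow$ Lemma~\ref{lem:contracting_to_exact_subdiagram} and Theorem~\ref{thm:matroidal_pivot} let you pivot $\Rightarrow$ produce a strictly $a$-Gale-smaller presentation) are precisely the tools the paper has assembled for an eventual proof.

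The two gaps you flag are exactly the two that remain open, and it is worth being concrete about why they are hard. For the choice of $a$: Example~\ref{ex:minimal_doesnt_always_work} already shows the ordinary Gale-minimal presentation ($a=1$) can cross, so any proof must genuinely select $a$ from the positroid data; but a single flacet or a single anti-exceedance of the decorated permutation need not determine a globally compatible $a$, because a transversal positroid can have several non-nested cyclic-interval flacets and a pivot that uncrosses one pair of sets may introduce a crossing with a set outside the exact subsystem you used. The same-size case in Proposition~\ref{prop:all_sets_same_size} avoids this entirely because the Gale-minimal presentation of the exact subsystem $\calT$ is forced to be the nested family $\{t_1,\dots,t_{|S|-1},t_{|S|+\ell-1}\}$, and this rigidity is exactly what breaks when set sizes differ: the $a$-Gale-minimal basis of $\calB^{\ast}(\calT\setminus S')$ restricted to the cyclic flat no longer has such a transparent form, and you would need an argument that it cannot cross sets of a different size elsewhere in $\calS$. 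Until you can either (a) exhibit a canonical $a$ together with a proof that every crossing in its $a$-Gale-minimal presentation produces a strictly $a$-Gale-smaller pivot, or (b) show a well-ordering argument on the multiset of $a$-Gale orders ruling out infinite descent, you do not have a proof -- and neither does the paper.
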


This conjecture holds in the cases described in Propositions \ref{prop:richardsons_behave} and \ref{prop:all_sets_same_size} and has been verified exhaustively for matroids of rank up to $4$ on up to $10$ elements. Additionally, it has received extensive computational verification on  randomized examples of matroids of rank up to $8$ on up to $14$ elements.

\section{Comparison with Other Structures} \label{sec:comparison_with_other_classes}

We note that not all positroids are of the form $\calB(\calS)$ for some $\calS$.

\begin{example} \label{ex:non_transversal_positroid}
Consider the matrix
\begin{displaymath}
\left(
\begin{array}{cccccc}
1 & 1 & 0 & 0 & -1 & -1 \\
0 & 0 & 1 & 1 & 1 & 1
\end{array}
\right).
\end{displaymath}
\noindent
All maximal minors of this matrix are nonnegative, so the matroid $\calB$ it represents is a positroid. However, this matroid is not a transversal matroid. Suppose that $\calB = \calB(\{S_1, S_2\})$. Suppose that $1 \in S_1$. Then, $2 \notin S_2$, since $12\notin \calB$. So, $2 \in S_1$, and $1 \notin S_2$. Then, $3,4,5,6 \in S_2$, since $13, 14, 15, 16 \in \calB$. Then, since $34, 56 \notin \calB$, $3,4,5,6 \notin S_1$. So, $35 \notin \calB(\{S_1, S_2\})$, contradicting the fact that $\calB = \calB(\{S_1, S_2\})$.
\end{example}

The {\textit{interval rank function}} the map sending a $k \times n$ matrix $M$ to the $n \times n$ upper triangular matrix $r(M)$, where
\begin{displaymath}
r(M)_{ij} = \mathrm{rank}(\mbox{the submatrix of $M$ using columns $\{i, i+1,\dots, j\}$}).
\end{displaymath}

Note that the intervals appearing here are ordinary intervals, not cyclic intervals. An {\textit{interval positroid variety}} is the set of points in $\Gr(k,n)$ with a fixed interval rank matrix. The matroid represented by a generic point in an interval positroid variety is an {\textit{interval positroid}}. Interval positroid varieties were introduced by Knutson in \cite{knutson:interval_positroid} to study the degenerations appearing in Vakil's ``geometric Littlewood-Richardson rule," \cite{vakil:littlewood_richardson}. Interval positroid varieties are positroid varieties. All Schubert varieties, opposite Schubert varieties, and Richardson varieties are interval positroid varieties.  The matroid from Example \ref{ex:non_transversal_positroid} is an example of an interval positroid which is not a transversal matroid. The following example provides a transversal positroid which is not an interval positroid.

\begin{example}
Let $\calS = \{1245, 23, 56\}$. The set system $\calS$ is noncrossing, so Theorem \ref{thm:loci_are_positroids} implies $\calB(\calS)$ is a positroid. The interval rank matrix of a generic point in $L(\calS)$ is
\begin{equation} \label{eqn:interval_rank_matrix}
\left(
\begin{array}{cccccc}
1 & 2 & 2 & 2 & 3 & 3 \\
0 & 1 & 2 & 2 & 3 & 3 \\
0 & 0 & 1 & 2 & 3 & 3 \\
0 & 0 & 0 & 1 & 2 & 2 \\
0 & 0 & 0 & 0 & 1 & 2 \\
0 & 0 & 0 & 0 & 0 & 1
\end{array}
\right).
\end{equation}
The interval positroid variety associated to (\ref{eqn:interval_rank_matrix}) is the smallest interval positroid variety containing $L(\calS)$. So, if $\calB(\calS)$ is an interval positroid, it must be the one defined by (\ref{eqn:interval_rank_matrix}).
Computing from (\ref{eqn:interval_rank_matrix}) a bounded affine permutation as described in \cite{knutson:interval_positroid}, then associating a positroid to this bounded affine permutation as described in \cite{knutson:juggling}, the interval positroid associated to (\ref{eqn:interval_rank_matrix}) is
\begin{displaymath}
\{125, 126, 135, 136, 145, 146, 156, 235, 236, 245, 246, 256, 345, 346, 356\}.
\end{displaymath}
\noindent
Notably, $145$ is a basis of this interval positroid, but $145 \notin \calB(\calS)$.
\end{example}

Even though $\MR(\calB) \cap L(\calS)$ is dense in both $\MR(\calB)$ and $L(\calS)$, neither set is in general contained in the other.

\begin{example}
Let $\calS = \{134,234\}$. Then,
\begin{displaymath}
\mathrm{span}\left(
\begin{array}{cccc}
1 & 0 & -1 & -1 \\
0 & 1 & 1 & 1
\end{array}
\right) \in L(\calS).
\end{displaymath}
\noindent
This point is in $\Gr_{\geq 0}(2,4)$, but represents a matroid aside from $\calB(\calS)$. So, this point is not in $\MR(\calB(\calS))$.
\end{example}

\begin{example}
The Marsh-Rietsch cell associated to the positroid $\calB = \binom{[4]}{2}$ is
\begin{displaymath}
\left\{ \mathrm{span}\left(
\begin{array}{cccc}
1 & 0 & -a_3 & -(a_3 a_4 + a_3 a_2) \\
0 & 1 & a_1 & a_1 a_2
\end{array}
\right) \ : \ a_1, a_2, a_3, a_4 \in \rr^{\ast}
\right\}.
\end{displaymath} 
The set $\calS = \{134, 234\}$ satisfies $\calB(\calS) = \calB$. The subset of $\MR(\calB)$ where $a_4 = -a_2$ is not contained in $L(\calS)$.
\end{example}

A {\textit{rank variety}} is $\overline{L(\calS)}$ where each $S \in \calS$ is an (ordinary) interval. Rank varieties were introduced by Billey and Coskun in \cite{billey:generalized_richardson_varieties} as a generalization of Richardson varieties.

Diagram varieties were introduced by Liu in \cite{liu:diagram_varieties}, and studied by Pawloski in \cite{pawlowski:interval_positroid_varieties}. They define a {\textit{diagram}} to be a subset $D$ of $[k] \times [n-k]$. If $A$ is a $k \times n$ matrix, let $[A \mid I_k]$ be the matrix obtained by appending a $k \times k$ identity matrix to the right of $A$. The {\textit{diagram variety}} defined by $D$ is the closure of
\begin{displaymath}
\{\mathrm{span}[A \mid I_k] : A \in M_{k,n-k} \mbox{ with } A_{ij} = 0 \mbox{ when } (i,j) \in D\}.
\end{displaymath}
\noindent
Evidently, all diagram varieties are closures of basis shape loci, but there are basis shape loci whose closures are not diagram varieties.

\section{Dominos and Wilson Loops} \label{sec:applications_of_basis_shape_loci}

An amplituhedra is a projection of $\Gr_{\geq 0}(k,n)$ to $\Gr(k,k+m)$ for some $m$ by a totally positive matrix. When $m = 4$, volumes of amplituhedra conjecturally compute scattering amplitudes in $\calN = 4$ supersymmetric Yang-Mills theory (SYM). Toward verifying this conjecture, Arkani-Hamed and Trnka conjecture in \cite{arkani:amplituhedron} that a collection of positroid cells called BCFW cells project to a triangulation of the $m = 4$ amplituhedron, in the sense that their images are dense in the amplituhedron and overlap in a set of measure zero. BCFW cells, after Britto, Cachazo, Feng, and Witten, arise from the BCFW recurrence relation, which is known to compute certain amplitudes in $\calN = 4$ SYM \cite{britto:bcfw_recurrence}.

Karp, Williams, and Zhang provide a program for proving that BCFW cells triangulate amplituhedra in \cite{karp:decompositions}, and for finding collections of positroid cells triangulating amplituhedra for other even $m$. Roughly, their strategy is to find a basis of a special shape, called a domino basis, for any plane in the positroid cell under consideration, then to apply sign variation techniques to these basis shapes to verify disjointness of these cells' projections. Conjecture A.7 in \cite{karp:decompositions} says that points in BCFW cells admit domino bases. Their sign variation techniques are similar to those used in \cite{arkani:unwinding_the_amplituhedron} to describe amplituhedra in terms of binary codes. When $k = 1$, the conjectured triangulation of \cite{karp:decompositions} is among the triangulations of cyclic polytopes from Theorem 4.2 in \cite{rambau:cyclic_polytopes}.

Part of the impetus for this work was to serve this program of Karp, Williams, and Zhang. Currently, there is a sense of the kind of basis shapes which should be amenable to their sign variation arguments and in many cases of the cells which should appear in a triangulation of the amplituhedra. One missing component is a formal way of connecting these two ideas. The hope is that, rather than first identifying a family of positroid cells and then trying to find special bases for points in these cells, one might be able to first identify the sorts of basis shapes $\calS$ which are amenable to sign variation arguments, then work with positroid cells $L(\calS) \cap \Gr_{\geq 0}(k,n)$. Since $\overline{L(\calS)} = \overline{V_{\calB(\calS)}}$, working with $L(\calS)$ is no different than working directly with the positroid cell from the perspective of producing a triangulation.

Briefly, we introduce the basis shapes appearing in Karp, Williams, and Zhang's program which our work is presently able to handle; this is just a subset of the basis shapes Karp, Williams, and Zhang consider. Say that a vector $\mathbf{v}$ is a {\textit{$i$-domino}} if $\mathrm{supp}(\mathbf{v}) = \{i,i+1\}$, where by convention $n+1 = 1$\footnote{\cite{karp:decompositions} treats $n$-dominos slightly differently, defining $\mathbf{v}$ to be an $n$-domino if $\mathrm{supp}(\mathbf{v}) = n$. We choose our convention since it will more natural for drawing a connection with Wilson loop cells.}. For $I \subset [n]$, say $\mathbf{v}$ is an {\textit{$I$-domino}} if $\mathbf{v}$ is a sum of $i$-dominos with disjoint supports for all $i \in I$. For $\calI = \{I_1, \dots, I_k\}$, say $V \in \Gr(k,n)$ admits an {\textit{$\calI$-domino basis}} if $V$ is the span of $I$-dominos for $I \in \calI$. Given such an $\calI$, let
\begin{equation} \label{eqn:domino_to_set_family}
\calI' = \{I'_1, \dots, I'_k\},
\end{equation}
\noindent
where
\begin{equation} \label{eqn:dominos_to_sets}
I'_j = I_j \cup \{i+1 : i \in I_j\}.
\end{equation}
\noindent
Evidently, the set of planes admitting $\calI$-domino bases is exactly $L(\calI')$.

In \cite{karp:decompositions}, $i$-dominos are further required to have their adjacent entries have the same sign. This requirement is a consequence of positivity and the shape constraints.

\begin{proposition}
Let $V \in \Gr_{\geq 0}(k,n) \cap L(\calS)$ for some set system $\calS$ satisfying the hypotheses of Theorem \ref{thm:dimension_and_minimal_presentations}. Let $\mathbf{v}_{S_1}, \dots, \mathbf{v}_{S_k}$ be a basis of shape $\calS$ for $V$ and let $(v_1, \dots, v_n) = \mathbf{v}_{S_1}$. If $v_i, v_{i+1} \neq 0$, then
\begin{displaymath}
\mathrm{sgn}(v_{i+1}) =
\begin{cases} \mathrm{sgn}(v_{i}) & \mbox{if } i \neq n, \\
(-1)^{k-1} \mathrm{sgn}(v_{i}) & \mbox{if } i = n.
\end{cases}
\end{displaymath}
\end{proposition}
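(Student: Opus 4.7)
The plan is to exhibit a $(k-1)$-subset $I_0 \subset [n]$ disjoint from both $\{i,i+1\}$ and $S_1$, and then to compare the Pl\"ucker coordinates $\Delta_{I_0 \cup \{i\}}(V)$ and $\Delta_{I_0 \cup \{i+1\}}(V)$ by expanding along the first row of the shape-$\calS$ basis matrix. Since $v_i, v_{i+1} \neq 0$, both $i$ and $i+1$ lie in $S_1$. Invoking the equivalent form of minimality in Remark \ref{rem:ii_and_iii_equivalent} applied to $T = S_1$ produces a matching in $\Gamma_\calS$ of size $k-1$ from $\calS \setminus \{S_1\}$ to some $I_0 \subseteq [n] \setminus S_1$, automatically disjoint from $\{i,i+1\}$. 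Extending this matching by the edge $S_1 \to i$ (respectively $S_1 \to i+1$) produces matchings saturating $I_0 \cup \{i\}$ and $I_0 \cup \{i+1\}$, so both sets are bases of $\calB(\calS)$.

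Let $M$ be the $k \times n$ matrix whose rows are $\mathbf{v}_{S_1}, \ldots, \mathbf{v}_{S_k}$. Because $I_0 \cap S_1 = \emptyset$, the first row of $M$ vanishes on every column indexed by $I_0$, and the Laplace expansion of $\det(M_{[k], I_0 \cup \{i\}})$ along this row collapses to the single term $(-1)^p v_i \det(M_{[2,k], I_0})$, where $p$ counts the elements of $I_0$ strictly less than $i$. The analogous expansion yields $\det(M_{[k], I_0 \cup \{i+1\}}) = (-1)^{p'} v_{i+1} \det(M_{[2,k], I_0})$, with $p'$ defined identically for $i+1$. Since $I_0$ avoids both $i$ and $i+1$, a direct count gives $p = p'$ whenever $i \neq n$, while for $i = n$ the element $n$ sits in the final position of $I_0 \cup \{n\}$ and $1$ sits in the first position of $I_0 \cup \{1\}$, forcing $p - p' = k-1$. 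Provided $\det(M_{[2,k], I_0})(V) \neq 0$, positivity of $V$ forces both Pl\"ucker coordinates to share a common sign, and taking their ratio yields exactly $\mathrm{sgn}(v_{i+1}) = (-1)^{p'-p} \mathrm{sgn}(v_i)$, which is precisely the claimed relation.

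The main obstacle is that $\det(M_{[2,k], I_0})(V)$ may vanish for the specific $V$ at hand. The chosen matching guarantees that this determinant is a polynomial containing a distinguished nonvanishing monomial in the generic parameterization of $L(\calS)$, so it is generically nonzero there; but the particular $V$ in the hypothesis could lie on its zero locus. I anticipate handling this either by choosing an alternative $I_0$ via matroid exchange inside the positroid $\calB(V) \subseteq \calB(\calS)$ represented by $V$, or by a continuity argument: the signs of $v_i$ and $v_{i+1}$ are locally constant on the open subset of the shape-$\calS$ parameter space where they are nonzero, so approximating $V$ by nearby points of $L(\calS) \cap \Gr_{\geq 0}(k,n)$ at which the relevant minor is nonzero transfers the sign identity to $V$ itself.
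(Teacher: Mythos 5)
You take essentially the same route as the paper: invoke Remark~\ref{rem:ii_and_iii_equivalent} with $T = S_1$ to produce a $(k-1)$-element $J \subseteq [n] \setminus S_1$ matched to $\calS \setminus \{S_1\}$ in $\Gamma_{\calS}$, then compare $\Delta_{J \cup i}$ and $\Delta_{J \cup (i+1)}$ by Laplace expansion along the first row. Your sign bookkeeping with $p$ and $p'$ is a little more explicit, but the argument is the same one.

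The obstacle you flag at the end---that $\det(M_{[2,k],J})$ can vanish at the particular $V$ at hand---is a genuine gap, and the paper's proof does not address it either. Unfortunately neither of your proposed repairs can close it, because the proposition as stated fails for sufficiently degenerate $V$. Take $k=3$, $n=5$, $\calS = \{\{1,2,3\}, \{1,4,5\}, \{2,4,5\}\}$; every inequality (\ref{eqn:set_size_condition}) holds, so $\calS$ is a minimal presentation. The matrix
\begin{displaymath}
M = \left(\begin{array}{rrrrr} 1 & -1 & -1 & 0 & 0 \\ 1 & 0 & 0 & 1 & 1 \\ 0 & -1 & 0 & 1 & 1 \end{array}\right)
\end{displaymath}
has shape $\calS$, and all ten maximal minors are nonnegative ($\Delta_{145}=\Delta_{245}=\Delta_{345}=0$, the remaining seven are $1$ or $2$), so its row span $V$ lies in $\Gr_{\geq 0}(3,5)\cap L(\calS)$; yet $v_1 = 1$ and $v_2 = -1$ have opposite signs. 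The only admissible $J$ is $\{4,5\}$, columns $4$ and $5$ of $M$ are equal, so $\det(M_{[2,3],\{4,5\}})=0$ and $\Delta_{J\cup 1}=\Delta_{J\cup 2}=0$: positivity imposes nothing. Your matroid-exchange fallback fails because $\mathrm{rk}_{\calB(V)}([n]\setminus S_1) = 1 < k-1$, so the matroid represented by $V$ has no basis of the form $J\cup\{i\}$ with $J\subseteq [n]\setminus S_1$ whatsoever. Your continuity fallback also fails: on the slice of $L(\calS)$ carrying this sign pattern on $\mathbf{v}_{S_1}$, the three Pl\"ucker coordinates $\Delta_{145},\Delta_{245},\Delta_{345}$ equal $v_1,v_2,v_3$ times the common factor $\det(M_{[2,3],\{4,5\}})$, so nonnegativity forces that factor---precisely the minor you need to be nonzero---to vanish identically, not just at $V$.

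The proposition, and your argument, becomes correct if one adds the hypothesis that $V$ represents the full matroid $\calB(\calS)$ (equivalently, that $J\cup\{i\}$ is a basis of the matroid of $V$ for some admissible $J$), which holds generically on $L(\calS)$ and is presumably what is intended in the application to domino bases; but it is not a consequence of the stated hypotheses. So while you correctly identified the weak point, the ``anticipated'' patches cannot be carried out without strengthening the statement.
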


\begin{proof}
From point (iii) of Theorem \ref{thm:dimension_and_minimal_presentations}, there is some $J = \{j_1, \dots, j_{k-1}\} \subseteq [n] \setminus S_1$ such that there is a matching from $S_2, \dots, S_k$ to $J$ in $\Gamma_{\calS}$. Let $A = (a_{\ell,m})$, where $a_{\ell,m}$ is the $j_m^{th}$ coordinate of $\mathbf{v}_{S_{\ell+1}}$. Then,
\begin{displaymath}
\Delta_{J \cup i} = v_i \det(A),
\end{displaymath}
\noindent
and
\begin{displaymath}
\Delta_{J \cup i+1} =
\begin{cases}
v_{i+1} \det(A) & \mbox{if } i \neq n, \\
(-1)^{k-1} v_{i+1} \det(A) & \mbox{if } i = n.
\end{cases}
\end{displaymath}
Since $V \in \Gr_{\geq 0}(k,n)$, these two Pl\"ucker coordinates have the same sign.
\end{proof}

These are only the simplest basis shapes appearing in Karp, Williams, and Zhang's work. There are still several gaps to be filled before this present work can be useful for describing all basis shapes appearing in their work. Notably, the results in this paper apply in the case when the entries of the matrix $M_{\calS}(\mathbf{x})$ are all independent. To capture the basis shapes appearing in \cite{karp:decompositions}, one would need to extend our results to describe cases where some of the entries of $M_{\calS}(\mathbf{x})$ are prescribed to be equal to each other. Another useful feature would be a way of translating between the set system $\calS$ and any of the many combinatorial objects indexing positroids described in \cite{postnikov:total_positivity}.

Another motivation for the present work comes from another program for computing amplitudes in $\calN = 4$ SYM via Wilson loop diagrams (also called MHV diagrams). This program is introduced from a physical perspective in \cite{bullimore:mhv_diagrams} and surveyed in a way more accessible to mathematicians in \cite{agarwala:wilson_loop_diagram}. The geometric spaces arising in this program are basis shape loci $L(\calS)$ for a particular class of $\calS$ defined by Wilson loop diagrams. One goal was to illustrate a connection between these shapes and domino bases. Corollary \ref{cor:wld_dominos} shows that the set of points in $\Gr_{\geq 0}(k,n)$ admitting $\calI$-domino bases where $|I| = 2$ for all $I \in \calI$ is exactly the union of Wilson loop cells.

\begin{definition} \label{def:wilson_loop_diagram}
A {\textit{Wilson loop diagram}} is a set of unordered pairs $\calP \subset \binom{[n]}{2}$ such that for all $P \in \calP$, if $i \in P$, then $i+1 \notin P$.\end{definition}

Each $P \in \calP$ is called a {\textit{propagator}}. Graphically, one commonly represents a Wilson loop diagram by a convex polygon whose vertices are labeled by the elements of $[n]$ counterclockwise. For each $P = \{i_P, j_P\} \in \calP$, draw an internal wavy line between the edges of the polygon defined by the vertices $\{i_P, i_P +1\}$ and $\{j_P, j_P +1\}$. For example, the Wilson loop diagram $\{ 24, 46\}$ would have the following graphical representation.

\begin{displaymath}
\begin{tikzpicture}
\drawWLD{6}{1}
	\drawnumbers
	\drawprop{6}{0}{4}{1}
	\drawprop{2}{0}{4}{-1}
\end{tikzpicture}
\end{displaymath}

Given a Wilson loop diagram $\calP = \{P_1, P_2, \dots, P_k\}$, define $\calP'$ and $P'_i$ as in (\ref{eqn:domino_to_set_family}) and (\ref{eqn:dominos_to_sets}). The basis shape locus $L(\calP')$ is called the {\textit{Wilson loop cell}} associated to the diagram.

\begin{definition} \label{def:admissible_wld}
A Wilson loop diagram $\calP$ is {\textit{admissible}} if the following hold:
\begin{itemize}
\item[(i)] for all $\calQ \subseteq \calP$, $\left| \bigcup_{Q \in \calQ} Q' \right| \geq |\calQ| + 3$, and
\item[(ii)] if $P = (i_P, j_P)$, $Q = (i_Q, j_Q) \in \calP$ are two propagators, then $i_P < i_Q < j_Q < j_P$ in the cyclic ordering of $[n]$.
\end{itemize}
\end{definition}

Let $\calP$ be an admissible Wilson loop diagram and $\calP'$ be the associated set system. Theorem 3.38 in \cite{agarwala:wilson_loop_positroid} says that $\calB(\calP')$ is a positroid. One would like to be able to apply tools from the positroid literature to study Wilson loop diagrams. However, the object of interest is really the cell $L(\calP')$, not the matroid $\calB(\calP')$. In the literature, it was understood and used, but not clear that working with the positroid cell associated to $\calB(\calP')$ was, up to a set of measure zero, equivalent to working with the Wilson loop cell $L(\calP')$. This equivalence is a corollary of Theorems \ref{thm:loci_are_positroids} and \ref{thm:no_crossing_implies_positroid}.

\begin{theorem} \label{thm:consequences_for_wilson_loops}
Let $\calP$ be an admissible Wilson loop diagram, and $\calP'$ be the associated set system. Then,
\begin{itemize}
\item[(i)] $\dim(L(\calP')) = 3k$.
\item[(ii)] The matroid $\calB(\calP')$ is a positroid.
\item[(iii)] $\overline{L(\calP')} = \overline{V_{\calB(\calP')}}$.
\end{itemize}
\end{theorem}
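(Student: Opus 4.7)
The plan is to derive each of (i), (ii), and (iii) in turn from the main theorems of the paper, using the two admissibility conditions of Definition \ref{def:admissible_wld} as the only inputs about $\calP$.

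For (i), I would begin by noting that every $P' \in \calP'$ has cardinality exactly four: the Wilson loop axiom $i \in P \Rightarrow i+1 \notin P$ applied at both $i_P$ and $j_P$ forces $j_P \neq i_P+1$ and $i_P \neq j_P+1$, so the four indices $i_P, i_P+1, j_P, j_P+1$ are distinct. Consequently $\max_{Q \in \calQ}(|Q'|) = 4$ for every nonempty $\calQ \subseteq \calP$, and the inequality (\ref{eqn:set_size_condition}) of Theorem \ref{thm:dimension_and_minimal_presentations} reduces exactly to $|\bigcup_{Q \in \calQ} Q'| \geq |\calQ| + 3$, which is admissibility condition (i). So Theorem \ref{thm:dimension_and_minimal_presentations} simultaneously certifies $\calP'$ as a minimal presentation of $\calB(\calP')$ and delivers $\dim(L(\calP')) = \mathrm{nmd}(\calP') = -k + 4k = 3k$.

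For (ii), the plan is to invoke Theorem \ref{thm:no_crossing_implies_positroid}, so the task reduces to checking that $\calP'$ is noncrossing in the sense of Definition \ref{def:crossing_sets}. Given any two propagators $P, Q \in \calP$, admissibility (ii) allows me to label them so that the cyclic order around $[n]$ reads $i_P, i_Q, j_Q, j_P$; equivalently, $Q'$ lies inside the cyclic arc from $i_P+1$ to $j_P$, while $P'$ decomposes into the two adjacent blocks $\{i_P, i_P+1\}$ and $\{j_P, j_P+1\}$ that bracket this arc. The key observation is that the positions occupied by $P' \cup Q'$ appear in the cyclic block pattern $P', Q', Q', P'$, which forbids the alternating arrangement $a <_a b <_a c <_a d$ required by the crossing definition. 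Concretely, for any choice of $a, c \in P' \setminus Q'$, either they lie in the same block (no intermediate $b$ exists) or in opposite blocks (all of $Q'$ lies between them on one cyclic arc, leaving nothing for $d$ on the complementary arc); and the analogous check for $a, c \in Q' \setminus P'$ uses that the two $Q'$-blocks both lie in the same component of $[n] \setminus P'$, so the arc between them likewise misses every element of $P'$. Care is needed in the boundary cases $i_Q = i_P+1$ or $j_Q+1 = j_P$, where $P'$ and $Q'$ share an element; in those subcases the element in question simply drops out of both $P' \setminus Q'$ and $Q' \setminus P'$, and the block structure is preserved. This finite case analysis is the only step that requires real work, and it is notationally fiddly rather than conceptually deep. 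Once it is completed, Theorem \ref{thm:no_crossing_implies_positroid} immediately gives that $\calB(\calP')$ is a positroid.

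Finally, (iii) is immediate from Theorem \ref{thm:loci_are_positroids} applied to (ii), since that theorem says $\overline{L(\calS)} = \overline{V_{\calB(\calS)}}$ whenever $\calB(\calS)$ is a positroid. The main obstacle in the whole argument is thus the crossing verification in (ii); parts (i) and (iii) are short translations of the admissibility axioms into the hypotheses of the earlier theorems.
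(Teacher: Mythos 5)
Your proof is correct and follows the same route as the paper: part (i) by matching admissibility condition (i) with the minimal-presentation criterion of Theorem~\ref{thm:dimension_and_minimal_presentations} (using that every $P'$ has exactly four elements, so the right-hand side of~(\ref{eqn:set_size_condition}) is $|\calT|+3$), part (ii) by verifying that admissibility (ii) makes $\calP'$ noncrossing and invoking Theorem~\ref{thm:no_crossing_implies_positroid}, and part (iii) from Theorem~\ref{thm:loci_are_positroids}. The paper asserts the noncrossing fact without detail, so your case analysis usefully fills that in; the one small imprecision is that when $a,c$ lie in the same block of $P'$ the obstruction is not literally ``no intermediate $b$ exists'' (the long arc from $a$ to $c$ does contain elements of $Q'$) but rather that one of the two cyclic arcs between $a$ and $c$ is empty, so one cannot place both $b$ and $d$ --- a wording issue only, not a gap.
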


\begin{proof}
Point (i) follows from point (i) of Theorem \ref{thm:dimension_and_minimal_presentations}. Point (ii) follows from the fact $\calI'$ is noncrossing and Theorem \ref{thm:no_crossing_implies_positroid}. Point (ii) originally appears as Theorem 3.38 in \cite{agarwala:wilson_loop_positroid}. Point (iii) follows from Theorem \ref{thm:loci_are_positroids}.
\end{proof}

Wilson loop diagrams come equipped with a notion of exact subdiagrams, analogous to Definition \ref{lem:contracting_to_exact_subdiagram}.

\begin{definition} \label{def:wld_exact_subdiagram}
Let $\calP$ be a Wilson loop diagram satisfying point (i) of Definition \ref{def:admissible_wld}, and let $\calP'$ be the associated set system. A subset $\calQ \subseteq \calP$ is an {\textit{exact subdiagram}} if its associated set system is an exact subsystem of $\calP'$ in the sense of Definition \ref{def:exact_subset}. That is,
\begin{displaymath}
\left| \bigcup_{Q \in \calQ} Q' \right| = |\calQ| + 3.
\end{displaymath}
\end{definition}

Let $\calP$ and $\calQ$ be Wilson loop diagrams satisfying point (i) of Definition \ref{def:admissible_wld} and let $\calP'$ and $\calQ'$ be their associated set systems. Say that $\calP \sim \calQ$ if $\calB(\calP') = \calB(\calQ')$. Since all sets in $\calP'$ have the same size, any set in an exact subsystem of $\calP'$ may be pivoted in the sense of Definition \ref{def:pivot}. Further, any exact subsystem supported on $E \subseteq [n]$ must be defined by $|E| - 3$ propagators from $\calP$. Using these observations, one can show that any exact two exact subdiagrams supported on the set $E$ may be pivoted to one another. Moreover, it is always possible to arrange $|E|-3$ propagators supported on $E$ vertices to obey both points (i) and (ii) of Definition \ref{def:admissible_wld} (in fact, the number of ways to do so is a Catalan number). We record these observations.

\begin{theorem}[Theorem 1.18 in \cite{agarwala:wilson_loop_positroid}] \label{thm:uncrossing_wlds}
Let $\calP$ and $\calQ$ be Wilson loop diagrams satisfying point (i) of Definition \ref{def:admissible_wld}. If $\calP$ and $\calQ$ differ by only an exact subdiagram supported on some $E \subseteq [n]$, then $\calP \sim \calQ$. If $\calP$ is exact, then $\calP \sim \calR$ for some $\calR$ satisfying both points (i) and (ii) of Definition \ref{def:admissible_wld}.
\end{theorem}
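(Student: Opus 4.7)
The approach is to leverage the pivoting apparatus of Section \ref{sec:transversal_positroids} applied to the Wilson loop set systems $\calP'$, all of whose sets have size $4$. Both parts of the theorem reduce to short applications of Theorems \ref{thm:pivot} and \ref{thm:matroidal_pivot} once one pins down the matroid carried by an exact subsystem.

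For the first part, I would write $\calP = \calR \cup \calP_0$ and $\calQ = \calR \cup \calQ_0$, where $\calP_0$ and $\calQ_0$ are the differing exact subdiagrams on the common support $E$ and $\calR$ is the part shared by the two diagrams. Because every set in $\calP_0'$ and $\calQ_0'$ has the maximal size $4$, Theorem \ref{thm:matroidal_pivot} applies inside each exact subsystem. A short Hall-theorem calculation, pitting the exact condition $|\bigcup \calP_0'| = |\calP_0|+3$ against the minimum-presentation inequality of Theorem \ref{thm:dimension_and_minimal_presentations}(iii), shows that $\calB(\calP_0')$ on $E$ is the uniform matroid $U_{|E|-3,|E|}$, and the same holds for $\calQ_0$. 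Thus $\calP_0$ and $\calQ_0$ present the same matroid on $E$, and iterated application of Theorem \ref{thm:matroidal_pivot} pivots $\calP_0$ into $\calQ_0$ one set at a time; each pivot preserves $\calB(\calP')$ by Theorem \ref{thm:pivot}, so $\calB(\calP') = \calB(\calQ')$, i.e., $\calP \sim \calQ$.

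For the second part, let $\calP$ be exact with support $E$. By the first part, any exact Wilson loop diagram on $E$ is $\sim$-equivalent to $\calP$, so it suffices to construct one admissible such diagram $\calR$. Listing $E$ in the cyclic order inherited from $[n]$, I would choose any triangulation of the $|E|$-gon carried by $E$ and read each of the $|E|-3$ noncrossing diagonals as a propagator between non-adjacent elements of $E$. The resulting $\calR$ consists of $|\calP|$ pairwise strictly nested propagators supported on $E$, so it satisfies both (i) and (ii) of Definition \ref{def:admissible_wld}; combined with the first part, $\calR \sim \calP$. The count $C_{|E|-2}$ of triangulations accounts for the Catalan-many admissible arrangements.

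The main technical obstacle I expect is the Hall-theoretic identification $\calB(\calP_0') = U_{|E|-3,|E|}$ on $E$. Supposing some $k$-subset $S \subseteq E$ is not a basis, one extracts $\calT' \subseteq \calP_0$ disjoint from a violating $S' \subseteq S$ with $|\calT'| \geq k - |S'| + 1$, and then derives a contradiction between the upper bound $|\bigcup \calT'| \leq |E| - |S'|$ implied by exactness and the lower bound $|\bigcup \calT'| \geq |\calT'| + 3$ from the minimum-presentation condition. Once this matroidal identity is established, both parts of the theorem follow directly from the pivoting apparatus combined with the standard Catalan-triangulation construction; a secondary bookkeeping check is that each triangulation diagonal yields a valid propagator, i.e., respects the non-adjacency constraint $i \in P \Rightarrow i+1 \notin P$, which is built into the definition of triangulation diagonals between non-adjacent vertices.
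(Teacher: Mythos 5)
The paper does not prove this theorem — it is attributed to Theorem~1.18 of Agarwala and Marin-Amat — and the paragraph preceding the statement gives only the sketch that you have fleshed out: pivots are available because all sets in $\calP'$ have size $4$, an exact subdiagram on $E$ has exactly $|E|-3$ propagators, any two such subdiagrams pivot to one another, and Catalan-many noncrossing arrangements on $E$ exist. So your route is essentially the paper's route. Your explicit identification $\calB(\calP_0')|_E = U_{|E|-3,|E|}$ via Hall's theorem is the missing detail behind the paper's ``one can show that any two exact subdiagrams on $E$ pivot to one another,'' and the calculation you outline (pitting exactness against the minimum-presentation inequality to bound $|E \setminus \bigcup_{T\in\calT}T| \leq |\calT_0|-|\calT|$) is correct. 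Combining this with Theorems~\ref{thm:pivot} and~\ref{thm:matroidal_pivot}, or more cleanly with the argument of Proposition~\ref{prop:all_sets_same_size} (both exact subdiagrams pivot to the common Gale-minimal form), gives the first part.

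One caution on the second part. Reading triangulation diagonals of ``the $|E|$-gon'' directly as propagators does not work as literally stated: a propagator $\{e_a,e_b\}$ has support $\{e_a,e_a{+}1,e_b,e_b{+}1\}$ computed in $[n]$, and if $e_a$ sits at the right end of a cyclic interval of $E$ then $e_a{+}1 \notin E$, so the diagonal is not a valid propagator supported on $E$. The correct combinatorial object is a noncrossing family of chords between the \emph{edges} $\{i,i+1\}$ of the $[n]$-polygon that lie inside $E$ (not between vertices of the $|E|$-gon), whose supports union to $E$; that these exist and are counted by a Catalan number requires using the structure $E$ inherits from being a union of the dominoes of an exact diagram, not just $|E|$. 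You do flag this as a bookkeeping check to verify, but it is a genuine gap in the triangulation argument, not merely bookkeeping; the paper glosses over it as well, which is presumably why both defer to Agarwala--Marin-Amat for the full proof.
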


\begin{theorem} \label{thm:converse_to_wld_positroid}
Let $\calP$ be a Wilson loop diagram satisfying point (i) of Definition \ref{def:admissible_wld} and $\calP'$ be the associated set system. Then, $\calB(\calP')$ is a positroid if and only if $\calP \sim \calQ$ for some Wilson loop diagram $\calQ$ satisfying both points (i) and (ii) of Definition \ref{def:admissible_wld}.
\end{theorem}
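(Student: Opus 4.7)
The backward direction is immediate from Theorem~\ref{thm:consequences_for_wilson_loops}(ii): if $\calP \sim \calQ$ with $\calQ$ satisfying both (i) and (ii), then $\calB(\calQ')$ is a positroid and $\calB(\calP') = \calB(\calQ')$ by definition of $\sim$.

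For the forward direction I would argue by induction on the number of pairs of propagators of $\calP$ whose chords visually cross, with base case $\calP$ already satisfying (ii). For the inductive step, fix any crossing pair $P = (i_P, j_P)$ and $Q = (i_Q, j_Q)$ in $\calP$. The first key claim is that there is an exact subdiagram of $\calP$ containing both $P$ and $Q$: if $|P' \cap Q'| = 3$, then $|P' \cup Q'| = 5 = 2 + 3$ so $\{P, Q\}$ is itself exact; otherwise $|P' \cap Q'| \leq 2$, so $|P' \setminus Q'|, |Q' \setminus P'| \geq 2$, and the cyclic alternation of endpoints $i_P, i_Q, j_P, j_Q$ coming from the visual crossing forces a set-system crossing of $P'$ and $Q'$ in the sense of Definition~\ref{def:crossing_sets}, at which point Lemma~\ref{lem:finding_exact_subset}---applied to the minimal presentation $\calP'$ of the positroid $\calB(\calP')$, minimality being guaranteed by Theorem~\ref{thm:dimension_and_minimal_presentations}---produces the desired exact subdiagram. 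I would then take $\calT$ to be the unique maximal exact subdiagram of $\calP$ containing $P$ and $Q$, well-defined because all sets in $\calP'$ have size $4$, so Lemma~\ref{lem:unions_of_exact_subsystems} guarantees that any two exact subsystems sharing a common propagator have exact union. Theorem~\ref{thm:uncrossing_wlds} then replaces $\calT$ with an admissible subdiagram $\calR$ on the same vertex set $E = \bigcup_{T \in \calT} T'$, yielding $\calP_1 = (\calP \setminus \calT) \cup \calR$ with $\calP \sim \calP_1$, every crossing internal to $\calT$ eliminated (including that of $P$ and $Q$), and crossings among $\calP \setminus \calT$ untouched.

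To close the induction it remains to show that no new crossings are introduced between propagators of $\calR$ and propagators of $\calP \setminus \calT$. For this I would exploit the maximality of $\calT$: for each $S \in \calP \setminus \calT$, the failure of $\calT \cup \{S\}$ to be exact combined with $|E| = |\calT| + 3$ forces $|S' \setminus E| \geq 2$. Combining this with the positroid hypothesis on $\calB(\calP')$---which via Theorem~\ref{thm:positroid_flacets} and Lemma~\ref{lem:cyclic_flats_transversal_matroid} constrains the cyclic flats generated by $E$ to sit inside cyclic intervals of $[n]$---I expect to conclude that the chord of any such $S$ lies on one side of $E$, so $S$'s chord meets some chord of $\calR$ only if it already met some chord of $\calT$, giving a strict decrease in the total crossing count. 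The main technical obstacle is precisely this geometric claim, especially in the degenerate cases where $E$ is a disjoint union of small cyclic intervals rather than a single one (which can already occur, since the support of a single propagator with non-adjacent endpoint pairs splits into two two-element intervals); once this case analysis is handled, the rest of the argument is assembly.
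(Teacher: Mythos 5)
You take essentially the same route as the paper: use Lemma~\ref{lem:finding_exact_subset} to locate an exact subdiagram containing a crossing pair of propagators, replace it via Theorem~\ref{thm:uncrossing_wlds} with a noncrossing exact subdiagram on the same vertex set, and iterate. The paper's own proof of the forward direction is three sentences long and closes with ``Repeatedly applying this argument, $\calP \sim \calQ$ for some $\calQ$ which is noncrossing''; it neither checks that a chord crossing of propagators produces a set-system crossing in the precise sense of Definition~\ref{def:crossing_sets} (the hypothesis Lemma~\ref{lem:finding_exact_subset} actually needs), nor gives a termination argument for the iteration.

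You are more careful on both counts, and you correctly isolate the termination as the substantive issue. Your preliminary observations hold up: when $|P'\cap Q'|=3$ the pair $\{P,Q\}$ is itself exact, and maximality of $\calT$ together with exactness does force $|S'\setminus E|\geq 2$ for every $S\in\calP\setminus\calT$ (since $|E|=|\calT|+3$ and a non-exact $\calT\cup\{S\}$ must cover at least $|\calT|+5$ vertices). But, as you acknowledge, the crucial claim — that the replacement $\calR$ can be chosen so as to introduce no new crossings against $\calP\setminus\calT$, giving a strict drop in the crossing count — is not established. That claim turns on the cyclic geometry of $E$, which is delicate when $E$ splits into several small cyclic intervals, and you do not show that a compatible $\calR$ exists among the Catalan-many internally-noncrossing choices Theorem~\ref{thm:uncrossing_wlds} permits. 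The paper leaves this same step implicit. So the gap you flag is genuine, but it is not a flaw peculiar to your write-up: your argument reaches essentially as far as the paper's and stalls on the same unresolved monotonicity claim. If you want to close it, you will need either to verify your geometric claim about $E$ directly (Theorem~\ref{thm:positroid_flacets} and Lemma~\ref{lem:cyclic_flats_transversal_matroid} are indeed the right tools for constraining $E$'s cyclic structure), or to sidestep the iteration entirely by producing an explicit noncrossing target $\calQ$ — for instance, via an $a$-Gale minimal presentation as in the proof of Proposition~\ref{prop:all_sets_same_size} — and then arguing separately that the resulting set system has propagator form, i.e.\ each set is $\{i,i+1,j,j+1\}$. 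That last step is not automatic and would itself need proof.
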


\begin{proof}
Let $\calP$ be a Wilson loop diagram satisfying point (i) but not necessarily point (ii) of Definition \ref{def:admissible_wld} and let $\calP'$ its associated set systems. If $\calP \sim \calQ$ for some $\calQ$ satisfying both points (i) and (ii) of Definition \ref{def:admissible_wld}, then Theorem 3.38 in \cite{agarwala:wilson_loop_positroid} or Theorem \ref{thm:no_crossing_implies_positroid} implies that $\calB(\calP')$ is a positroid. If $\calB(\calP')$ is a positroid, but there is some pair of crossing propagators in $\calI$, Lemma \ref{lem:finding_exact_subset} says we can find some exact subdiagram involving these crossing propagators. Then, Theorem \ref{thm:uncrossing_wlds} says this exact subdiagram may be replaced with any noncrossing exact subdiagram supported on the same set of vertices. Repeatedly applying this argument, $\calP \sim \calQ$ for some $\calQ$ which is noncrossing.
\end{proof}

This theorem has a compelling rephrasing, using the language of dominos.

\begin{corollary} \label{cor:wld_dominos}
The set of points in $\Gr_{\geq 0}(k,n)$ admitting $\calI$-domino bases where $|I| = 2$ for all $I \in \calI$ is exactly
\begin{displaymath}
\bigcup_{\calP} L_{\geq 0}(\calP'),
\end{displaymath}
\noindent
where the union is across all admissible Wilson loop diagrams.
\end{corollary}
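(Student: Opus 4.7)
The forward inclusion follows by unwinding the definitions. If $\calP$ is an admissible Wilson loop diagram and $V \in L_{\geq 0}(\calP')$, then $V$ is spanned by $k$ vectors, each of support $P' = \{i_P, i_P+1, j_P, j_P+1\}$ for some $P = \{i_P, j_P\} \in \calP$. The non-adjacency condition in Definition \ref{def:wilson_loop_diagram} makes $\{i_P, i_P+1\}$ and $\{j_P, j_P+1\}$ disjoint, so each such vector is tautologically a $P$-domino; hence $V$ admits a $\calP$-domino basis with $|P|=2$ for every $P$.

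For the reverse inclusion, suppose $V \in \Gr_{\geq 0}(k,n)$ admits an $\calI$-domino basis with $|I|=2$ for each $I \in \calI$. The disjointness requirement on the supports of the two constituent $i$-dominos forces the non-adjacency condition, so $\calI$ is a Wilson loop diagram and $V \in L(\calI')$. My plan is to apply Theorem \ref{thm:converse_to_wld_positroid} to obtain an admissible $\calQ$ with $\calI \sim \calQ$; Theorem \ref{thm:locus_is_matroidal} then yields $L(\calI') = L(\calQ')$ and hence $V \in L(\calQ') \cap \Gr_{\geq 0}(k,n) = L_{\geq 0}(\calQ')$.

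Two hypotheses of Theorem \ref{thm:converse_to_wld_positroid} must be verified en route. The first is that $\calI$ satisfies point (i) of Definition \ref{def:admissible_wld}: if it does not, the variable-elimination argument from the proof of Theorem \ref{thm:dimension_and_minimal_presentations} lets one zero out an entry of $M_{\calI'}(\mathbf{y})$ without altering $L(\calI')$, and by performing the elimination at an endpoint of a propagator one can arrange that the reduced set system again arises from a smaller Wilson loop diagram, with iteration landing in the case where (i) holds. The second is that $\calB(\calI')$ is a positroid: since $V \in L(\calI') \cap \Gr_{\geq 0}(k,n)$, the idea is to perturb the parametrizing matrix $M_{\calI'}(\mathbf{y})$ representing $V$ through the positive orthant of parameter space and use Proposition \ref{prop:generic_point_represents_matroid} to exhibit a generic positive parameter whose row span represents exactly $\calB(\calI')$.

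The hard part is this last step. The subtlety is that $V$ may represent a proper positroid submatroid $\calM_V \subsetneq \calB(\calI')$, so a naive perturbation of the $y_{ij}$ need not preserve nonnegativity of all Pl\"ucker coordinates. The cleanest route combines the perturbation with the pivot machinery of Section \ref{sec:transversal_positroids}: use Lemma \ref{lem:finding_exact_subset} to isolate exact subsystems controlling the signs of key Pl\"ucker coordinates at $V$, then pivot via Theorem \ref{thm:matroidal_pivot} to move $V$ along a positive trajectory onto the dense positroid stratum $V_{\calB(\calI')}$, thereby certifying the positroid property of $\calB(\calI')$ and closing out the argument.
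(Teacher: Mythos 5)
Your forward inclusion and the overall plan—apply Theorem \ref{thm:converse_to_wld_positroid} to produce an admissible $\calQ$ with $\calI \sim \calQ$ and then invoke Theorem \ref{thm:locus_is_matroidal} to get $L(\calI') = L(\calQ')$—correctly captures how the paper views the corollary as a rephrasing of Theorem \ref{thm:converse_to_wld_positroid} (the paper gives no explicit proof). However, your treatment of the second hypothesis is circular. Lemma \ref{lem:finding_exact_subset} takes ``$\calS$ is a minimal presentation of the \emph{positroid} $\calB(\calS)$'' as a standing hypothesis; it cannot be used to \emph{establish} that $\calB(\calI')$ is a positroid, which is exactly what you are trying to show. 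Moreover, a pivot in the sense of Definition \ref{def:pivot} replaces one set $S$ in the presentation $\calS$ by another $S \setminus a \cup b$, leaving the point $V$ and the locus $L(\calS)$ untouched by Theorem \ref{thm:pivot}; there is no operation in the paper that ``moves $V$ along a positive trajectory onto the dense positroid stratum,'' so that sentence does not describe a valid argument.

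The reduction to the case where condition (i) of Definition \ref{def:admissible_wld} holds is also not justified. The variable-elimination step from the proof of Theorem \ref{thm:dimension_and_minimal_presentations} removes a single element from one support set, turning a size-$4$ set $\{i,i+1,j,j+1\}$ into a size-$3$ set, which is not of the form $P'$ for any propagator $P$. So the reduced set system is no longer a Wilson loop cell, and the claimed ``iteration landing in the case where (i) holds'' within the class of Wilson loop diagrams does not go through. Note, finally, that the step your argument is really missing—that $V \in L(\calI') \cap \Gr_{\geq 0}(k,n)$ forces $\calB(\calI')$ to be a positroid—is not an immediate consequence of any result stated in the paper either: Lemma \ref{lem:small_excluded_minors} only yields a dimension drop $\dim(L(\calS)) > \dim(L(\calS)\cap\Gr_{\geq 0}(k,n))$, not emptiness of the positive part, so one cannot directly conclude that a crossing diagram has no positive points. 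A correct proof would have to either supply a stronger emptiness statement for Wilson loop cells with crossings or argue differently that $V$ lies in an admissible cell.
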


\end{document}